\documentclass[12pt, a4paper]{amsart}

\usepackage[hmargin=30mm, vmargin=25mm, includefoot, twoside]{geometry}
\usepackage[bookmarksopen=true]{hyperref}

\usepackage{amsfonts,amssymb,verbatim}
\usepackage{latexsym}
\usepackage{mathrsfs}
\usepackage{stmaryrd}
\usepackage{xspace}
\usepackage{enumerate}
\usepackage{paralist}
\usepackage{graphicx}
\usepackage[all]{xy}
\usepackage{extarrows}
\usepackage[usenames,dvipsnames]{xcolor}

\usepackage{txfonts, pxfonts}

\usepackage{amsmath}
\usepackage{amsthm}


\usepackage[T1]{fontenc}	
\usepackage[utf8]{inputenc}			
\usepackage{mathtools} 					
\usepackage{microtype} 					
\usepackage{bbm}					
\usepackage{bm}						
\usepackage[shortcuts]{extdash} 			
\usepackage{subcaption}					
\usepackage{mparhack}					
\usepackage{accents}

\usepackage{todonotes}
\setlength{\marginparwidth}{2.5 cm}


\newcommand{\CCC}{\mathbb{C}}

		\newcommand{\NN}{\mathbb{N}}	
			
		\newcommand{\RR}{\mathbb{R}}

\newcommand{\CC}{\mathcal{C}}			
		\newcommand{\CF}{\mathcal{F}}	
\newcommand{\CG}{\mathcal{G}}		\newcommand{\CH}{\mathcal{H}}

		\newcommand{\CP}{\mathcal{P}}


\newcommand*{\note}[1]{\todo[backgroundcolor=white]{\footnotesize#1}}

\newcommand{\ubar}[1]{\underaccent{\bar}{#1}} 		


\newcommand{\cay}{{\rm Cay}}


 \fboxrule0.0001pt \fboxsep0pt			

    
\DeclarePairedDelimiter\abs{\lvert}{\rvert}		
\DeclarePairedDelimiter\norm{\lVert}{\rVert}		
\DeclarePairedDelimiter\angles{\langle}{\rangle}	
\DeclarePairedDelimiter\paren{(}{)}			
\DeclarePairedDelimiter\brackets{[}{]}			
\DeclarePairedDelimiter\braces{\{}{\}}			

	\newcommand{\Bigparen}[1]{\paren[\Big]{#1}}

\newcommand{\Bigbrack}[1]{\brackets[\Big]{#1}}
	\newcommand{\bigbraces}[1]{\braces[\big]{#1}}
	\newcommand{\Bigbraces}[1]{\braces[\Big]{#1}}
\newcommand{\bigmid}{\mathrel{\big|}}			
\newcommand{\Bigmid}{\mathrel{\Big|}}			




\DeclareMathOperator{\id}{id}				



\DeclareMathOperator{\aut}{Aut}				







\theoremstyle{plain}
\newtheorem{thm}{Theorem}[section]				
\newtheorem{prop}[thm]{Proposition}		\newtheorem{proposition}[thm]{Proposition}
\newtheorem{lem}[thm]{Lemma}						
\newtheorem{cor}[thm]{Corollary}		
\newtheorem{qu}[thm]{Question}

\newtheorem*{thm*}{Theorem}			\newtheorem*{theorem*}{Theorem}		
\newtheorem*{prop*}{Proposition}		\newtheorem*{proposition*}{Proposition}
\newtheorem*{lem*}{Lemma}			\newtheorem*{lemma*}{Lemma}			
\newtheorem*{cor*}{Corollary}			\newtheorem*{corollary*}{Corollary}
\newtheorem*{qu*}{Question}			\newtheorem*{question*}{Question}
\newtheorem*{conj*}{Conjecture}			\newtheorem*{conjecture*}{Question}
\newtheorem*{fact*}{Fact}
\newtheorem*{claim*}{Claim}

\newtheorem{alphthm}{Theorem}			

\newtheorem{alphcor}[alphthm]{Corollary}

\theoremstyle{definition}
\newtheorem{de}[thm]{Definition}

\newtheorem*{de*}{Definition}			\newtheorem{definition*}{Definition}	
\newtheorem*{notation*}{Notation}	
\newtheorem*{conv*}{Convention}			\newtheorem*{convention*}{Convention}

\theoremstyle{remark}
\newtheorem{rmk}[thm]{Remark}			\newtheorem{remark}[thm]{Remark}			
			\newtheorem{example}[thm]{Example}

\numberwithin{equation}{section}

 \theoremstyle{definition}
  \newtheorem{defn}[thm]{Definition}
 \theoremstyle{remark}
 \newtheorem{rem}[thm]{Remark}

\def\B{\mathfrak B}
\def\K{\mathfrak K}

\def\H{\mathcal H}

\def\supp{\mathrm{supp}}
\def\diam{\mathrm{diam}}

\def\Ad{\mathrm{Ad}}

\def\N{\mathbb N}
\def\C{\mathbb C}

\def\Cq{C^*_{uq}}

\newcommand{\fol}{F\o{}lner\xspace}
\newcommand{\fnc}{\ubar c}

\begin{document}

\title{On the structure of asymptotic expanders}

\author{Ana Khukhro, Kang Li, Federico Vigolo, and Jiawen Zhang}

\address[A. Khukhro]{Department of Pure Mathematics and Mathematical Statistics, University of Cambridge, Wilberforce Road, Cambridge CB3 0WB, United Kingdom.}
\email{a.khukhro@dpmms.cam.ac.uk}

\address[K. Li]{Institute of Mathematics of the Polish Academy of Sciences, \'{S}niadeckich 8, 00-656 Warsaw, Poland.}
\email{kli@impan.pl}

\address[F. Vigolo]{Weizmann Institute of Science, 234 Herzl Street, 7610001, Rehovot, Israel.}
\email{vigolo.maths@gmail.com}

\address[J. Zhang]{School of Mathematics, University of Southampton, Highfield, SO17 1BJ, United Kingdom.}
\email{jiawen.zhang@soton.ac.uk}

\date{}
\subjclass[2010]{Primary: 46H35, 20F65, 05C99, 51F99. Secondary: 19K56.}
\keywords{Asymptotic Expanders; \fol Sets; Averaging Projection; Coarse Baum--Connes Conjecture; Uniform Roe Algebra; Quasi-Locality.}

\thanks{This work was partially supported by the grant 346300 for IMPAN from the Simons Foundation and the matching 2015-2019 Polish MNiSW fund. KL was supported by the European Research Council (ERC) under the European Union's Horizon 2020 research and innovation programme (grant agreement no. 677120-INDEX). KL was also supported by the Internal KU Leuven BOF project C14/19/088. FV was supported by the ISF Moked 713510 grant number 2919/19. JZ was supported by the Sino-British Trust Fellowship by Royal Society, International Exchanges 2017 Cost Share (China) grant EC$\backslash$NSFC$\backslash$170341, and NSFC11871342.}

\begin{abstract}
In this paper, we study several analytic and graph-theoretic properties of asymptotic expanders. We show that asymptotic expanders can be characterised in terms of their uniform Roe algebra. Moreover, we use asymptotic expanders to provide uncountably many new counterexamples to the coarse Baum--Connes conjecture. 
Finally, we show that vertex-transitive asymptotic expanders are actually expanders. In particular, this gives a $C^*$-algebraic characterisation of expanders for vertex-transitive graphs. We achieve these results by showing that a sequence of asymptotic expanders always admits a ``uniform exhaustion by expanders''. This also implies that asymptotic expanders cannot be coarsely embedded into any $L^p$-space.
\end{abstract}

\date{\today}
\maketitle

\parskip 4pt

\section{Introduction}
This paper is concerned with the structure theory of asymptotic
expanders. The original motivation for this work comes from the study
of uniform Roe algebras associated with metric spaces: we developed a
structure theory for asymptotic expanders in order to answer some
questions raised in \cite{intro}. While doing so, we unveiled
structural properties of asymptotic expanders which appear to be of
independent interest and lead to more counterexamples to the coarse
Baum--Connes conjecture. The definition of asymptotic expanders is
elementary and of attractively geometric nature. In turn, many of the
techniques employed in this paper have a
graph-theoretic/coarse-geometric flavour. 

\subsection{Original $C^*$\=/algebraic motivation}
Asymptotic expanders were introduced in \cite{intro}. The driving motivation behind this definition was the study of the relation between the \emph{uniform Roe algebra} $C^*_u(X)$ and the \emph{uniform quasi\=/local algebra} $C^*_{uq}(X)$ for a discrete metric space $X$ (see Subsection~\ref{ssec:Roe.algebras} for precise definitions of these algebras). In short, the uniform Roe algebra of a discrete metric space $X$ with \emph{bounded geometry} is a $C^*$\=/algebra that carries a great deal of coarse geometric information about the underlying metric space $X$, and is a key component in a number of significant conjectures relating coarse geometric/topological properties with analytic ones (recall that a discrete metric space $X$ has bounded geometry if there is, for every $R>0$, a uniform upper bound on the cardinalities of all the $R$-balls in $X$).

Although the uniform Roe algebra $C^*_u(X)$ plays a key role in many conjectures, it is usually difficult to determine whether a given operator belongs to $C^*_u(X)$.
In an attempt to find a more practical characterisation of elements in $C^*_u(X)$, Roe introduced the notion of quasi-locality in \cite{Roe88, Roe96}. The quasi\=/local operators form an algebra---denoted $C^*_{uq}(X)$---that is somewhat easier to handle, and it follows from the definition that $C^*_u(X)\subseteq C^*_{uq}(X)$.
In some special cases, it has been proved that these two algebras coincide \cite{Eng19,ST19,SZ18}, but it is not known whether this is always the case (we refer to \cite{intro, SZ18} and references therein for further discussion and motivation).

The idea in \cite{intro} was to focus on the study of the \emph{averaging projection} $P_X$, where $X$ is a \emph{coarse disjoint union} of a sequence of finite metric spaces (see Subsections~\ref{ssec:basic.notation} and \ref{ssec:averaging.projections}). One of the main results in \cite{intro} is a complete characterisation of the metric spaces for which the averaging projection $P_X$ is quasi\=/local. Namely, if $X=\bigsqcup_{n\in\NN} X_n$ is the coarse disjoint union of a sequence of finite metric spaces $\{X_n\}$ with $\abs{X_n}\to \infty$, then $P_X$ is quasi\=/local if and only if the following condition is satisfied:

\noindent\parbox{3.8 em}{(AsEx)}\parbox{\textwidth-3.8 em}{for every $0<\alpha\leq\frac{1}{2}$, there exist $0<c<1$ and $R>0$, such that
     \(
      \abs{\partial_RA}> c\abs{A}
     \)
     for every $A\subset X_n$ satisfying $\alpha\abs{X_n}\leq \abs{A}\leq \frac{1}{2}\abs{X_n}$;}

\noindent where $\partial_R A$ is the \emph{$R$\=/boundary} of $A$, \emph{i.e.,} the set of points in the complement of $A$ that are at most at distance $R$ from $A$ (see \cite[Theorem B]{intro}). This characterisation led the authors of \cite{intro} to introduce the concept of asymptotic expanders:

\begin{de*}[{\cite[Definition 3.9]{intro}}]
 A sequence $\braces{X_n}_{n\in\NN}$ of finite metric spaces with $\abs{X_n}\to \infty$ is a sequence of \emph{asymptotic expanders} if it satisfies the condition (AsEx).
\end{de*}

\begin{rmk}
 The notion of asymptotic expanders is a weakening of the notion of expanders (Definition~\ref{de:expanders}). This is of course no surprise, as it is well\=/known that the averaging projection $P_X$ belongs to the uniform Roe algebra $C^*_u(X)$ (hence is quasi\=/local) when $X$ is a coarse disjoint union of expanders (see for example \cite[Corollary~3.4]{intro})\footnote{Starting (at least) with the work of Voiculescu \cite{MR1026768}, it has been known that expander-like properties can be used to construct projections in $C^*_u(X)$ with pathological properties. }. 
\end{rmk}

The authors of \cite{intro} had excellent reasons to focus on averaging projections. In fact, if $\abs{X_n}\to\infty$ then the averaging projection $P_X$ is always a non\=/compact ghost operator (Definition~\ref{de:ghost.operator}). In particular, if $P_X$ is also quasi\=/local then we are in the following win-win situation:
\begin{itemize}
 \item either $P_X$ does not belong to the uniform Roe algebra---in which case we just proved that $C^*_u(X)\neq C^*_{uq}(X)$;
 \item or $P_X$ is in the uniform Roe algebra---in which case we are in a good position for providing further counterexamples to the coarse Baum--Connes conjecture.\footnote{A key ingredient in the construction of counterexamples to the coarse Baum--Connes conjecture is the existence of a non\=/compact ghost projection in the (uniform) Roe algebra.}
\end{itemize}

One possible way to distinguish whether $P_X\in C^*_u(X)$ or not is based on a result by Finn-Sell \cite{FinnSell2014}, from which one can deduce that $P_X\notin C^*_u(X)$ if $\braces{X_n}_{n\in\NN}$ \emph{uniformly coarsely embeds} into some Hilbert space (Subsection~\ref{ssec:basic.notation}).
This raises the question \cite[Question 7.3]{intro} of whether there exist asymptotic expanders with bounded geometry that can be coarsely embedded into Hilbert spaces.

Initially, the above was our motivating question. In this work, we provide a negative answer to \cite[Question 7.3]{intro} and various other more refined results.

\subsection{Further motivation for asymptotic expanders}
In the previous subsection we explained the origin of the notion of asymptotic expanders, but we came to realise---a posteriori---that this notion should be much more useful and natural than it appears at first sight.

For one, asymptotic expanders turn out to behave very much like expanders, hence share several nice properties. At the same time, the notion of asymptotic expanders is more ``robust''. For example, they are invariant under ``perturbations of linearly small scale''. This is emphatically not the case for expanders (see Section~\ref{sec:examples} for more details).

Secondly, the notion of asymptotic expanders arises naturally in the context of $C^*$\=/algebras as explained in the previous subsection, and in the context of dynamics since it captures exactly the notion of \emph{strong ergodicity} (in a sense made precise in \cite{li2021asymptotic}). The latter also provides more evidence to support the ``robustness'' of the notion of asymptotic expanders: it is well\=/known that strong ergodicity is invariant under fairly wild transformations (\emph{e.g.} orbit equivalences).

We deem that this mixture of naturality, robustness and expander-like behaviour will turn out to have unexpected applications in subjects ranging from dynamics of group actions to operator algebras and coarse geometry.

\subsection{The main structure results}
We prove two structure theorems for asymptotic expanders: one for finite graphs with uniformly bounded degree (Theorem~\ref{thm:structure.of.as.exp.}), and one in the more general setting of finite metric spaces (Theorem~\ref{thm:structure.of.as.exp.general.case}).

In the case of graphs we prove the following, showing that asymptotic expanders display expander-like behaviour in a certain precise sense (see Definition \ref{de:exhaustion} for the definition of ``uniform exhaustion by expanders'').

\begin{alphthm}[Theorem~\ref{thm:structure.of.as.exp.}]\label{thm:intro.structure.graph}
 Let $\{X_n\}_{n \in \N}$ be a sequence of finite connected graphs with uniformly bounded degree and $|X_n| \to \infty$.
Then it is a sequence of asymptotic expanders if and only if it admits a uniform exhaustion by expanders.
\end{alphthm}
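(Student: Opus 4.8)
The plan is to prove the two implications separately; ``uniform exhaustion $\Rightarrow$ asymptotic expanders'' is a bookkeeping argument, while the converse carries the real content.

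\emph{From an exhaustion to \textup{(AsEx)}.} Suppose $\{X_n\}$ admits a uniform exhaustion by expanders, with uniform constants $c,R$ and, for each $n$, the data of Definition~\ref{de:exhaustion}. Fix $0<\alpha\le\frac12$ and $A\subseteq X_n$ with $\alpha\abs{X_n}\le\abs{A}\le\frac12\abs{X_n}$. I would descend through the levels of the exhaustion until reaching the first level $Y$ that ``sees $A$ at its own scale'', i.e.\ such that both $A\cap Y$ and $Y\setminus A$ occupy a definite proportion of $Y$ (all such proportions being controlled in terms of $\alpha$ and the size relations between the levels). Applying the expander inequality for $Y$ to whichever of $A\cap Y$, $Y\setminus A$ has at most half the size of $Y$ produces a subset of $Y$ of size $\gtrsim\abs{A}$, lying within distance $R$ of $A$ (resp.\ of $X_n\setminus A$) and disjoint from $A$ (resp.\ from $X_n\setminus A$); in the first case this set is already inside $\partial_R A$, and in the second case the uniformly bounded degree lets one pass from $\partial_R(X_n\setminus A)$ to $\partial_R A$ losing only a factor depending on the degree and $R$. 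Tracking the dependence of the final constant on $\alpha$ (and not on $n$) gives \textup{(AsEx)}.

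\emph{From \textup{(AsEx)} to an exhaustion.} The first move is to make the radius uniform: in a bounded-degree graph every point of $\partial_R A$ lies within distance $R-1$ of $\partial_1 A$ and an $(R-1)$-ball has at most $D^{R-1}$ points, so \textup{(AsEx)} already yields, for each $\alpha$, a constant $c_\alpha>0$ with $\abs{\partial_1 A}>c_\alpha\abs{A}$ for every medium-scale $A$; one may then work at radius $1$, at the cost of worsening the (non-decreasing) family $\{c_\alpha\}$. The core of the argument is then an extraction of genuine expanders: starting from $Z=X_n$, repeatedly delete from the current set a subset $A$ with $\abs{A}\le\frac12\abs{Z}$ and $\abs{\partial_1^Z A}<c_0\abs{A}$ (an ``obstruction''), where $c_0$ is fixed once and for all, smaller than $c_{1/10}$; this terminates with a genuine $(c_0,1)$-expander $Y_n$, deleted from $X_n$ across a decreasing chain $Z_0\supseteq Z_1\supseteq\cdots$ by removing pairwise disjoint obstructions $A_1,A_2,\dots$. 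The key estimate is that the union $W$ of an initial segment of these satisfies $\abs{\partial_1 W}\le\sum_s\abs{\partial_1^{Z_s}A_s}<c_0\abs{W}$, so if one could choose such a $W$ of density in $[\tfrac{1}{10},\tfrac12]$ this would contradict \textup{(AsEx)} at $\alpha=\tfrac{1}{10}$; hence only a bounded fraction of $X_n$ is removed and $\abs{Y_n}\ge\tfrac{9}{10}\abs{X_n}$. Recursing this construction on the residual sets $Z\setminus Y_n$ organises each $X_n$ into the exhaustion demanded by Definition~\ref{de:exhaustion}.

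The main obstacle is exactly the uniform control of the extraction. Since the expansion constant in \textup{(AsEx)} degrades as $\alpha\to0$, one cannot afford to delete all poorly-expanding subsets at once without risking the removal of a non-negligible part of $X_n$; the delicate step is to assemble the obstructions into a single set whose density is bounded below by an absolute constant---so that \textup{(AsEx)} applies with a fixed expansion constant---while keeping the comparison uniform in $n$, and it is precisely here that the uniformly bounded degree (hence bounded overlaps of combinatorial balls, together with the reduction to radius $1$) is indispensable. A secondary task is to verify that the recursion terminates with all of its quantitative data---expansion constant, radius, and the size ratios between consecutive levels---bounded independently of $n$, so that the filtration produced really satisfies Definition~\ref{de:exhaustion}.
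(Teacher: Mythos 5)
The easy direction of your proposal (exhaustion $\Rightarrow$ (AsEx)) is fine and is essentially the argument in the paper: pick the level $Y_{n,k}$ with $\alpha_k$ small compared to $\alpha$, apply expansion to $A\cap Y_{n,k}$ or to $Y_{n,k}\smallsetminus A$, and in the second case convert boundary points of the complement into points of $\partial_R A$ at the cost of a degree factor. In the hard direction your greedy extraction is a legitimate alternative to the paper's route (the paper instead removes a \emph{single maximal} Følner set $F_{n,\alpha}$ and uses maximality, Lemma~\ref{lem:complement.of.maximal.F}, plus a separate estimate for medium-sized subsets of the complement), and your key estimate $|\partial_1 W|\leq\sum_s|\partial_1^{Z_s}A_s|<c_0|W|$ is correct. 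However, the density control has a genuine hole: an obstruction $A_s$ is allowed to have size up to $\tfrac12|Z_{s-1}|$, so the partial unions $W_m$ can jump in one step from below $\tfrac1{10}|X_n|$ to above $\tfrac12|X_n|$ (e.g.\ $|W_{m-1}|\approx 0.09\,|X_n|$ and $|A_m|\approx\tfrac12|X_n|$ give $|W_m|\approx 0.59\,|X_n|$). In that case no initial segment has density in $[\tfrac1{10},\tfrac12]$, the set $W_m$ is too large to be a Følner set, and (AsEx) cannot be invoked as you claim, so the bound $|Y_n|\geq\tfrac9{10}|X_n|$ is not established. The gap is fixable, but it needs an extra step you do not supply: when $|W_m|$ overshoots $\tfrac12|X_n|$, pass to the complement, which has density in $(\tfrac12-\tfrac1{10},\tfrac12)$ and boundary at most $D|\partial_1 W_m|<Dc_0|W_m|$, and then invoke (AsEx) at the scale $\tfrac12-\tfrac1{10}$; this forces you to choose $c_0$ below a degree-corrected multiple of $c_{1/2-\alpha}$ as well as below $c_\alpha$ --- exactly the second regime that the paper's proof of Theorem~\ref{thm:structure.of.as.exp.} treats explicitly.

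The final step is also not correct as stated. Your construction produces a single level (density $\geq\tfrac9{10}$), while Definition~\ref{de:exhaustion} demands levels with $\alpha_k\to0$, and ``recursing on the residual sets $Z\smallsetminus Y_n$'' does not supply them: the union of the extracted expander $Y_n$ with an expander extracted from the small residual need not be an expander inside $X_n$, since the two pieces may be joined by very few edges (this is precisely the perturbed-expander phenomenon of Example~\ref{eg:silly.example}). What is needed instead is to rerun the whole extraction, for each $k$, with a threshold $c_0^{(k)}$ chosen small relative to $c_{\alpha_k}$ (and to the degree-corrected constant at scale $\tfrac12-\alpha_k$); this is permitted because Definition~\ref{de:exhaustion} allows the expansion constants $c_k$ to depend on $k$, and it mirrors the paper's choice of maximal Følner sets at a sequence of scales $\alpha_k\to0$. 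With these two repairs your strategy goes through, but as written both the uniform bound on the removed fraction and the passage from one good level to a genuine exhaustion are missing.
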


The proof of the structure results (Theorem \ref{thm:structure.of.as.exp.} and Theorem \ref{thm:structure.of.as.exp.general.case}) is actually elementary. At its heart is the following key observation: if $F\subset X_n$ is a maximal (with respect to the inclusion) subset such that $\abs{\partial_RF}\leq c\abs{F}$ and $\abs{F}\leq \frac{1}{2}\abs{X_n}$, then any other $A\subset X_n$ that is disjoint from $F$ and has $\abs{A}\leq \frac{1}{2}\abs{X_n}-\abs{F}$ must have $\abs{\partial_R A\smallsetminus F}>c\abs{A}$ (Lemma~\ref{lem:complement.of.maximal.F}). The importance of this observation is that it implies the complement $X_n\smallsetminus F$ must satisfy a Cheeger inequality if $\abs{F}$ is very small.

\begin{rmk}
 Amine Marrakchi pointed out to us that these structure theorems---and their proofs---appear to be a graph-theoretic analogue of some previously known results for von Neumann algebras (see \emph{e.g.} \cite{marrakchi_strongly_2018}). We defer to \cite{li2020markovian,li2021asymptotic} for a more detailed discussion on this analogy. 
\end{rmk}

\subsection{Consequences of the Structure Theorems}
The Structure Theorems have a number of interesting applications.
First of all, we are able to provide a strongly negative answer to \cite[Question 7.3]{intro}:

\begin{alphthm}[Theorem \ref{thm: non CE general case}]\label{thm:intro:non.CE}
 A sequence of asymptotic expanders with uniformly bounded geometry cannot be coarsely embedded into any $L^p$\=/space for $1\leq p<\infty$ nor any \emph{uniformly curved} Banach space.
\end{alphthm}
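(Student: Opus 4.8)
The plan is to reduce the non-embeddability statement to the corresponding (well-known) statement for genuine expanders via the Structure Theorem (Theorem~\ref{thm:intro.structure.graph} / Theorem~\ref{thm:structure.of.as.exp.general.case}). The starting point is that a sequence of asymptotic expanders $\{X_n\}$ with uniformly bounded geometry admits a uniform exhaustion by expanders: there are subsets $Y_n \subseteq X_n$ with $|Y_n| \geq \delta |X_n|$ for some fixed $\delta>0$, such that the $Y_n$ (with the induced metric) form a sequence of genuine expanders, and such that the inclusions $Y_n \hookrightarrow X_n$ are ``uniformly coarse'' in a way that is compatible with forming the coarse disjoint union. If $\bigsqcup_n X_n$ coarsely embeds into a Banach space $E$, then by restriction so does $\bigsqcup_n Y_n$; so it suffices to show a sequence of genuine expanders with bounded geometry does not coarsely embed into any $L^p$-space ($1 \le p < \infty$) nor into any uniformly curved Banach space.

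For the expander case, I would invoke the standard Poincaré-inequality obstruction. For $L^p$: expanders satisfy a Poincaré inequality $\frac{1}{|Y_n|^2}\sum_{x,y} \|f(x)-f(y)\|_p^p \leq C \frac{1}{|Y_n|}\sum_{x\sim y}\|f(x)-f(y)\|_p^p$ for all maps $f\colon Y_n \to L^p$, with $C$ independent of $n$ (this follows from the spectral gap together with the fact that $L^p$ has a comparison between its metric and an $L^2$-type quantity, or more directly from Matoušek's extrapolation / the known fact that expanders are ``$L^p$-expanders'' for all finite $p$; I would cite Matoušek or the relevant statement rather than reprove it). Combined with the bounded-degree hypothesis, a coarse embedding would force the distortion of $Y_n$ into $L^p$ to stay bounded, contradicting the fact that the metric on an expander has diameter $\to\infty$ while the Poincaré inequality pins all the pairwise distances together; this is the classical argument that expanders do not coarsely embed into Hilbert space (Gromov), extended to $L^p$. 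For uniformly curved Banach spaces, I would use the characterisation (due to Pisier, via the Markov-type / Fourier-analytic description of uniformly curved spaces) that a Banach space is uniformly curved if and only if it admits no sequence of ``almost isometric'' copies of the edge spaces of expanders — equivalently, the expander Poincaré inequality holds with a uniform constant for maps into any uniformly curved space. This again blocks any coarse embedding by the same diameter-versus-Poincaré contradiction.

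I would organise the write-up as: (1) cite the Structure Theorem to extract the expander subsequence $\{Y_n\}$ and check that a coarse embedding of $\bigsqcup X_n$ restricts to one of $\bigsqcup Y_n$ — here I need to be slightly careful that ``coarse disjoint union'' of the $Y_n$ is itself coarsely embedded, which follows because $Y_n$ sits inside $X_n$ isometrically and the $X_n$ are taken far apart; (2) state the uniform Poincaré inequality for expanders into the target class of Banach spaces, with references; (3) run the standard contradiction: let $\rho_-, \rho_+$ be the control functions of the hypothetical coarse embedding $f$, apply the Poincaré inequality to $f|_{Y_n}$, bound the right-hand side using $\rho_+$ and bounded degree, bound the left-hand side below using $\rho_-$ and the fact (again from the expander/Cheeger condition) that a definite proportion of pairs in $Y_n$ are at distance $\geq \epsilon \log |Y_n| \to \infty$, and observe the two bounds are incompatible for large $n$.

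The main obstacle I expect is step~(2) for the class of \emph{uniformly curved} Banach spaces: for $L^p$ the Poincaré inequality with uniform constant is classical (Matoušek's extrapolation), but for general uniformly curved spaces one must invoke Pisier's deeper result characterising uniform curvature in terms of a generalized Poincaré/expander inequality, and one has to make sure the version cited really gives a single constant $C$ valid simultaneously for all $n$ and all maps $Y_n \to E$. A secondary, more bookkeeping-type obstacle is verifying that the induced-metric expander structure on $Y_n$ provided by the Structure Theorem is genuinely ``bounded geometry expanders'' in the quantitative sense the Poincaré inequality needs — i.e.\ that the exhaustion gives spectral gaps (via Cheeger), bounded degree, and $|Y_n|\to\infty$ — but all of this is packaged into Definition~\ref{de:exhaustion} and the Structure Theorem, so it should be a short verification rather than a real difficulty.
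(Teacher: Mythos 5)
Your reduction in step (1) has a genuine gap, and it is exactly the point the paper goes out of its way to warn about. The expander structure that the Structure Theorem provides on the pieces $Y_n\subseteq X_n$ is \emph{not} the induced metric: in the general case (Theorem~\ref{thm:structure.of.as.exp.general.case}) one gets a radius $R$ and makes $Y_n$ into a graph $\CG_n$ by joining points at $X_n$\=/distance at most $R$, and it is this auxiliary graph, with its own path metric, that is an expander. The inclusion $(\CG_n,d_{\CG_n})\hookrightarrow (X_n,d)$ is $R$\=/Lipschitz but need not be a uniform coarse embedding (two points at small $X_n$\=/distance can be at enormous $\CG_n$\=/distance), and the same caveat applies to the subgraphs $Y_{n,k}$ in Definition~\ref{de:exhaustion} (see the remark following it, and the explicit sentence in Section~\ref{sec:non.coarse.embeddability}: ``the inclusion $Y_n\subset X_n$ need not induce a coarse embedding $\CG_n\hookrightarrow X_n$''). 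Consequently, ``restricting'' a coarse embedding of $\bigsqcup_n X_n$ does \emph{not} give a coarse embedding of the expanders $\bigsqcup_n \CG_n$, so you cannot invoke the non\=/embeddability of genuine expanders as a black box. The same confusion resurfaces in your step (3): the statement that a definite proportion of pairs in $Y_n$ are at distance $\gtrsim \log|Y_n|$ is a statement about the \emph{graph} metric of $\CG_n$, whereas the lower control function $\rho_-$ of the hypothetical embedding only controls distances in $X_n$; being far in $\CG_n$ does not imply being far in $X_n$.

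The paper's proof repairs precisely this: one does not attempt a reduction to coarse non\=/embeddability of expanders, but runs the Poincar\'e\=/inequality argument directly on the restriction $f_n|_{Y_n}$ of the hypothetical embedding. The only inputs are: (i) $f_n|_{Y_n}$ is $\rho_+(R)$\=/Lipschitz as a map from $\CG_n$ (adjacent vertices of $\CG_n$ are $X_n$\=/close); (ii) the uniform $(E,p)$\=/Poincar\'e inequality for the expanders $\CG_n$, which forces $\norm{f_n(x)}^p\leq 2C$ for at least half the vertices; (iii) the lower control $\rho_-$, which converts this clustering in $E$ into clustering at bounded \emph{$X_n$}\=/distance; and (iv) the uniformly bounded geometry of the $X_n$, which makes such clustering impossible once $|Y_n|\to\infty$. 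So the contradiction comes from bounded geometry of the ambient spaces, not from the diameter of the expander graphs --- this is exactly the ``weak embedding'' obstruction. Your treatment of the Banach\=/space side (Matou\v{s}ek extrapolation for $L^p$, the Bochner/Fubini identity, Pisier for uniformly curved spaces, sphere equivalence) matches the paper's black\=/box strategy and is fine, but as written the geometric reduction does not stand.
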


\begin{rmk}
  Essentially, Theorem~\ref{thm:intro:non.CE} is a consequence of the fact that inside any sequence $\braces{X_n}_{n\in\NN}$ of asymptotic expanders with uniformly bounded geometry we can find \emph{weakly embedded} (see Section \ref{ssec:basic.notation}) sequences of actual expanders. Expanders are known to not coarsely embed into $L^p$-spaces or uniformly curved Banach spaces, and a standard argument shows that the existence of such a weak embedding is enough to prevent a coarse embedding into such Banach spaces (see Section~\ref{sec:non.coarse.embeddability} for a detailed explanation).
\end{rmk}

Theorem~\ref{thm:intro:non.CE} implies that it is impossible to use the strategy outlined in \cite[Section~7]{intro} to prove that the averaging projection $P_X$ is quasi-local but does not belong to the uniform Roe algebra of $X$.
This is no accident. In fact, we are then able to prove that if $\braces{X_n}_{n\in\NN}$ is a sequence of asymptotic expanders then the averaging projection $P_X$ belongs to the uniform Roe algebra $C^*_u(X)$. This closes the circle of ideas and proves the following:

\begin{alphthm}[Theorem \ref{thm: existence of ghost proj}]\label{thm:intro:averaging.operator.in.Roe}
Let $\{X_n\}_{n\in \N}$ be a sequence of finite metric spaces of uniformly bounded geometry with $|X_n| \to \infty$. Let $X$ be their coarse disjoint union, and $P_X$ be the averaging projection. Then the following are equivalent:
\begin{enumerate}
  \item  $P_X$ belongs to the uniform Roe algebra $C^*_u(X)$;
  \item $P_X$ belongs to the uniform quasi-local algebra $C^*_{uq}(X)$;
  \item $\{X_n\}_{n\in \N}$ is a sequence of asymptotic expanders.
\end{enumerate}
\end{alphthm}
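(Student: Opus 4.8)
The plan is to prove the cycle $(1)\Rightarrow(2)\Rightarrow(3)\Rightarrow(1)$. The implication $(1)\Rightarrow(2)$ is immediate from the inclusion $C^*_u(X)\subseteq C^*_{uq}(X)$. The implication $(2)\Rightarrow(3)$ is precisely (the contrapositive of) one direction of the characterisation recalled in the introduction, namely \cite[Theorem B]{intro}: if $\{X_n\}$ is \emph{not} a sequence of asymptotic expanders then $P_X$ fails to be quasi-local, so it cannot lie in $C^*_{uq}(X)$. (One has to note here that $|X_n|\to\infty$ is assumed, so the dichotomy of that theorem applies; if infinitely many $X_n$ have bounded cardinality one passes to a subsequence, but the statement as given includes $|X_n|\to\infty$.) So the real content is $(3)\Rightarrow(1)$: assuming $\{X_n\}$ is a sequence of asymptotic expanders with uniformly bounded geometry, we must exhibit $P_X$ as a norm-limit of operators of finite propagation with uniformly bounded matrix entries.

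First I would invoke Theorem~\ref{thm:intro.structure.graph} / Theorem~\ref{thm:structure.of.as.exp.general.case}: a sequence of asymptotic expanders admits a uniform exhaustion by expanders. Concretely, for each $n$ there is a subset $Y_n\subseteq X_n$ with $|X_n\smallsetminus Y_n|$ \emph{linearly small} (say $|Y_n^c|\le \varepsilon_n|X_n|$ with $\varepsilon_n\to 0$, after a finite stage) such that $\{Y_n\}$ is a sequence of expanders — or rather a nested family $Y_n^{(0)}\supseteq Y_n^{(1)}\supseteq\cdots$ with complements of vanishing relative size, each level an expander with uniform parameters. The key point I want to extract is: after removing a subset of \emph{relative} size $\varepsilon$, the remainder is genuinely an expander, with Poincaré/Cheeger constant bounded below independently of $n$, and with diameters $O(\log|X_n|)$ on each piece.

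The core of $(3)\Rightarrow(1)$ is then a decomposition $P_X = P_X^{(k)} + E_k$ where $P_X^{(k)}$ is built from the averaging projections on the expander pieces $Y_n^{(k)}$ together with the averaging projections on the small complements, and $E_k$ is an error. On each $X_n$ I would write $P_{X_n}$, the rank-one projection onto constants, and compare it to the rank-one projection $P_{Y_n^{(k)}}$ onto constants supported on $Y_n^{(k)}$: the difference has operator norm controlled by $\sqrt{|Y_n^{(k),c}|/|X_n|}\le\sqrt{\varepsilon^{(k)}}$, which $\to 0$ as $k\to\infty$, uniformly in $n$. This handles the ``small complement'' part. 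For the main part, since $\{Y_n^{(k)}\}$ is a sequence of expanders with uniform spectral gap, the averaging projection $P_{Y_n^{(k)}}$ is well approximated in norm by a polynomial of low degree in the (normalised) Markov/Laplace operator of the graph $Y_n^{(k)}$: by the spectral gap, $\bigl\|P_{Y_n^{(k)}} - q(M_n)\bigr\|\le C\lambda^{\deg q}$ for a fixed $\lambda<1$ and $M_n$ the averaging operator over the metric $1$-ball in $Y_n^{(k)}$, so a \emph{single} degree $d=d(\delta)$ works for all $n$ simultaneously; and $q(M_n)$ has propagation $\le d$ (in the metric of $X$, up to the bounded distortion coming from the coarse disjoint union and from $Y_n^{(k)}$ being a metric subspace). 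Assembling these over $n$ gives an operator in the algebraic uniform Roe algebra $\CC_u[X]$ at distance $\le C(\lambda^d + \sqrt{\varepsilon^{(k)}})$ from $P_X$; letting $k\to\infty$ and $d\to\infty$ shows $P_X\in C^*_u(X)$.

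The main obstacle I anticipate is \textbf{uniformity of the spectral gap across the exhaustion}, and the bookkeeping needed to turn it into a \emph{single} approximating operator valid for all $n$. The exhaustion theorem gives expanders with uniform \emph{Cheeger} constants, which (via a Cheeger–Buser-type inequality, using the uniform degree bound / uniformly bounded geometry) yields a uniform \emph{spectral} gap; this last step is where bounded geometry is genuinely used, and in the general metric-space case one must replace graphs by the canonical bounded-geometry graph structure on $X_n$ at a suitable scale $R$. One must also be careful that ``small complement'' is measured by cardinality ratio (so that the rank-one-projection comparison is small in \emph{operator} norm — this is where $\sqrt{\,\cdot\,}$ appears, and why linear smallness, not merely $o(|X_n|)$, is what makes the estimate uniform), and that adding back the averaging projections on the complements $Y_n^{(k),c}$ — which are themselves finite-propagation operators of norm $1$ but whose diameters we do \emph{not} control — is harmless precisely because we do not include them: we only need $P_X$ itself, and $\|P_{X_n}-P_{Y_n^{(k)}}\|$ small already accounts for everything. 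Handling the nested exhaustion (as opposed to a single $Y_n$) is a routine diagonal argument once these estimates are in place.
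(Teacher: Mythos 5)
Your proposal is correct and follows essentially the same route as the paper: both reduce to the implication (3)$\Rightarrow$(1) using the known equivalences, invoke the structure theorem to get expander subsets $Y_{n,k}$ with linearly small complements, observe that $\|P_{X_n}-P_{Y_{n,k}}\|\le\sqrt{\alpha_k}$ (the paper bounds this via the Frobenius norm), and conclude because the averaging projections of the expander pieces lie in the uniform Roe algebra, with propagation transferred back to $X$ via $d\le R_k\,d_k$. The only cosmetic difference is that you unpack the spectral-gap/polynomial approximation argument for expanders, which the paper simply cites as a known fact.
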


As we remarked before, we are now in a very good position to produce more counterexamples to the coarse Baum--Connes conjecture. In fact, Finn-Sell proved that---under the mild condition of \emph{fibered coarse embedding}---metric spaces containing non\=/compact ghost projection in their Roe algebra do violate the coarse Baum--Connes conjecture \cite{FinnSell2014}. The averaging projection $P_X$ of a sequence of asymptotic expanders is a non\=/compact ghost projection and Theorem~\ref{thm:intro:averaging.operator.in.Roe} implies that it does belong to the uniform Roe algebra. This is already enough to produce some counterexamples; but we can actually get a more general result by showing that the existence of \emph{weakly embedded} asymptotic expanders (Definition~\ref{de:weak.embedding}) is a sufficient condition for the existence of non\=/compact ghost projections. Namely, we prove the following:

\begin{alphthm}[Theorem~\ref{thm: existence of ghost proj in Roe}]\label{thm:intro:counterexamples}
If a sequence of finite metric spaces with uniformly bounded geometry admits a \emph{fibered coarse embedding} into a Hilbert space and also contains a weakly embedded sequence of asymptotic expanders, then it violates the coarse Baum--Connes conjecture.
\end{alphthm}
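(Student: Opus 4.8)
The plan is to reduce the statement to the production of a non-compact ghost projection (Definition~\ref{de:ghost.operator}) in the uniform Roe algebra of the ambient space, and then to invoke Finn--Sell's theorem \cite{FinnSell2014}: a bounded-geometry metric space that admits a fibered coarse embedding into a Hilbert space and whose Roe algebra contains a non-compact ghost projection must violate the coarse Baum--Connes conjecture. Writing $Y=\bigsqcup_m Y_m$ for the ambient coarse disjoint union, everything thus comes down to building such a projection in $C^*_u(Y)$ out of the weakly embedded sequence of asymptotic expanders.

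First I would set up a pushforward map on uniform Roe algebras. Let $\{X_n\}_{n\in\NN}$ be the weakly embedded sequence of asymptotic expanders, realised by maps $f_n\colon X_n\to Y$ as in Definition~\ref{de:weak.embedding}; these are equi-expansive, say $d_Y(f_n(x),f_n(x'))\le\rho(d_{X_n}(x,x'))$ for a fixed nondecreasing $\rho$, and --- after passing to a subsequence (which preserves being a sequence of asymptotic expanders) and using that $Y$ has uniformly bounded geometry --- I may assume the $f_n$ are injective with pairwise disjoint images lying in distinct components $Y_{m(n)}$. Note the $X_n$ then inherit uniformly bounded geometry, since an $R$-ball in $X_n$ injects into a $\rho(R)$-ball in $Y$. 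Let $X=\bigsqcup_n X_n$ with its coarse disjoint union metric, let $f=\bigsqcup_n f_n\colon X\to Y$, and let $V_f\colon\ell^2(X)\to\ell^2(Y)$ be the isometry $(V_f\xi)(y)=\xi(f^{-1}(y))$ for $y\in f(X)$ and $0$ otherwise. Since $V_f^*V_f=\Id$, the map $\Ad(V_f)\colon T\mapsto V_fTV_f^*$ is a $*$-homomorphism $\mathcal B(\ell^2(X))\to\mathcal B(\ell^2(Y))$, and it sends $C^*_u(X)$ into $C^*_u(Y)$: if $T$ has propagation $R$, then $(V_fTV_f^*)_{y,y'}=T_{f^{-1}(y),f^{-1}(y')}$ vanishes unless $d_{X}(f^{-1}(y),f^{-1}(y'))\le R$, hence unless $d_Y(y,y')\le\rho(R)$, so $V_fTV_f^*$ has propagation $\le\rho(R)$ and the same entry bound; passing to norm limits (recall $P_X$ below has infinite propagation and must be approximated by finite-propagation operators) gives the claim.

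Next I would feed in the averaging projection. Let $P_X=\bigoplus_n P_{X_n}\in\mathcal B(\ell^2(X))$ be the averaging projection; it belongs to $C^*_u(X)$ by Theorem~\ref{thm:intro:averaging.operator.in.Roe}, and this is the only place the asymptotic expander hypothesis enters. It is a non-compact ghost projection: each $P_{X_n}$ has rank $1$, so $P_X$ has infinite rank, and for every $\varepsilon>0$ the set $\{(x,x'):|(P_X)_{x,x'}|\ge\varepsilon\}=\bigsqcup_{n:\,|X_n|\le 1/\varepsilon}X_n\times X_n$ is finite since $|X_n|\to\infty$. Then $P_Y:=\Ad(V_f)(P_X)=V_fP_XV_f^*$ is a projection in $C^*_u(Y)$; since $V_f$ is an isometry it has the same (infinite) rank as $P_X$, hence is non-compact; and for each $\varepsilon>0$ the set $\{(y,y'):|(P_Y)_{y,y'}|\ge\varepsilon\}$ is the image under $f\times f$ of the corresponding finite set for $P_X$, hence finite, so $P_Y$ is a ghost. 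If the exact form of \cite{FinnSell2014} requires the non-uniform Roe algebra, tensoring $P_Y$ with a rank-one projection yields a non-compact ghost projection in $C^*(Y)$. Applying Finn--Sell's theorem to $Y$ then finishes the proof.

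I do not expect a serious obstacle: given Theorem~\ref{thm:intro:averaging.operator.in.Roe} and Finn--Sell's theorem, the content is exactly the pushforward bookkeeping above. The only slightly delicate point is the reduction from the bare hypothesis of a ``weakly embedded sequence'' (Definition~\ref{de:weak.embedding}) to maps $f_n$ that are injective with disjoint images in distinct components of $Y$, which is where the precise content of that definition together with the uniformly bounded geometry of $Y$ must be used --- if the $f_n$ are only uniformly finite-to-one, one replaces $V_f$ by a suitably amplified partial isometry carrying the same propagation estimate, and the rest goes through verbatim.
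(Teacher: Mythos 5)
Your overall strategy (push a non\-/compact ghost projection forward along the weak embedding and quote Finn--Sell) is the same as the paper's, but there is a genuine gap at exactly the point you dismiss as bookkeeping: the reduction to injective $f_n$ with pairwise disjoint images. A weak embedding in the sense of Definition~\ref{de:weak.embedding} is only a sequence of uniformly Lipschitz maps whose fibres over balls are a vanishing \emph{fraction} of $\abs{X_n}$; the fibres themselves may have cardinality growing with $n$, and no passage to subsequences or use of bounded geometry lets you replace the given maps by injective ones. The paper's own example at the end of Section~\ref{sec:ghosts} (quotienting an asymptotic expander with $\abs{X_n}\sim n\log n$ onto a graph $Y_n$ with $\abs{Y_n}=2n$) gives a weak embedding that is roughly $\log n$\-/to\-/one and for which no injective Lipschitz replacement into the same target can exist, simply by counting: the image of the connected graph $X_n$ under an $L$\-/Lipschitz map is forced into boundedly many components of $Y$, whose total cardinality is far smaller than $\abs{X_n}$. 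Your fallback (``uniformly finite\-/to\-/one, amplify the isometry, the rest goes through verbatim'') does not close this: the maps need not be \emph{uniformly} finite\-/to\-/one, and in the non\-/injective case the ghost property of $VP_XV^*$ is no longer the trivial statement you give for the injective case. Its $(y,z)$ entry is a block of norm $\sqrt{\abs{f_n^{-1}(y)}\,\abs{f_n^{-1}(z)}}/\abs{X_n}$, and making this small is precisely where the weak\-/embedding decay of fibre sizes must be invoked; this computation is the actual content of Proposition~\ref{prop: weak embedding and ghost projection.finite to 1}.

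Two further points the paper has to handle and your sketch does not: for $\mathrm{Ad}_V$ to land in a Roe algebra one needs local compactness, i.e.\ each fibre $f^{-1}(y)$ finite, and in fact that each $y$ is hit by only finitely many $X_n$; this properness is not part of the hypothesis and is derived in Corollary~\ref{cor: weak embedding and ghost projection.box space} from connectedness of the (expander) graphs, the Lipschitz bound, and the coarse\-/disjoint\-/union structure of the ambient space. Moreover the amplified construction produces a ghost projection in the non\-/uniform Roe algebra $C^*(Y)$ rather than $C^*_u(Y)$, which is fine for Finn--Sell but is another place where ``verbatim'' hides real work. Your idea of feeding in $P_X\in C^*_u(X)$ directly via Theorem~\ref{thm: existence of ghost proj} (instead of first passing to genuine expanders through the Structure Theorem, as the paper does) is a legitimate variant for the membership step; but as written the proof of the key pushforward step is incomplete, and completing it essentially reproduces the paper's Proposition~\ref{prop: weak embedding and ghost projection.finite to 1} and Corollary~\ref{cor: weak embedding and ghost projection.box space}.
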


Starting with a sequence of expanders of \emph{large girth}, we use a geometric trick to construct a sequence of asymptotic expanders that are \emph{not} expanders and have large girth (Example~\ref{eg:Schreier.triv.intersection}). This example satisfies the hypotheses of Theorem~\ref{thm:intro:counterexamples} (graphs with large girth do admit such a fibered coarse embedding) and hence provides new counterexamples to the coarse Baum--Connes conjecture. This provides an affirmative answer to Question 7.2 in \cite{intro}.

To the best of our knowledge, all the previously known counterexamples to the coarse Baum--Connes conjecture are based on some sort of spectral gap condition \cite{H99, HLS02, MR2871145, FinnSell2014, sawicki_warped_2017}. On the other hand, asymptotic expanders that are \emph{not} expanders do not satisfy this spectral gap condition (see Remark~\ref{rmk:counterexamples.no.gap} for a more in-depth discussion). Hence, we deduce our main corollary of this paper:

\begin{alphcor}[Corollary~\ref{cor:uncountably.many.counterexamples}]\label{cor:intro:main.corollary}
There are counterexamples to the coarse Baum--Connes conjecture that do not have spectral gaps. In fact, there exist uncountably many such examples that are not coarsely equivalent to one another.
\end{alphcor}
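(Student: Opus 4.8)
The plan is to assemble the pieces already in place: Theorem~\ref{thm:intro:counterexamples} gives us a mechanism for producing counterexamples to the coarse Baum--Connes conjecture out of any sequence of finite metric spaces of uniformly bounded geometry that simultaneously (a) fibered-coarsely embeds into Hilbert space and (b) contains a weakly embedded sequence of asymptotic expanders. The example flagged in the excerpt (Example~\ref{eg:Schreier.triv.intersection}) — obtained from a sequence of expanders of large girth by the geometric trick producing large-girth asymptotic expanders that are not expanders — satisfies both hypotheses, since large girth guarantees the fibered coarse embedding, and it is by construction an asymptotic expander (weakly embedded in itself). So the first step is simply to invoke Theorem~\ref{thm:intro:counterexamples} on this example to get at least one counterexample, and then to verify that it \emph{does not} have a spectral gap: this is the content of Remark~\ref{rmk:counterexamples.no.gap}, and the point is that an honest spectral gap would force the uniform expansion (Cheeger) inequality to hold on \emph{all} subsets, contradicting the fact that the space is not a sequence of expanders. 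I would state this explicitly: if $\{X_n\}$ had a uniform spectral gap, then by the Cheeger--Buser-type inequalities it would be a sequence of expanders, which it is not.

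The second and more substantive step is to upgrade ``one counterexample'' to ``uncountably many pairwise coarsely inequivalent counterexamples.'' Here the natural strategy is a diagonal/interleaving construction: start from a fixed sequence of large-girth expanders $\{Y_n\}$, and for each infinite subset $S\subseteq\N$ (or each binary sequence) build a space $X^{(S)}$ as a coarse disjoint union in which, at the $n$-th stage, one either inserts a copy of the large-girth expander $Y_n$ or inserts its ``de-expanderised'' asymptotic-expander modification, according to whether $n\in S$. Each such $X^{(S)}$ still contains a weakly embedded sequence of asymptotic expanders (infinitely many stages of either type qualify) and still fibered-coarsely embeds into Hilbert space (large girth is preserved throughout), so each $X^{(S)}$ violates coarse Baum--Connes by Theorem~\ref{thm:intro:counterexamples} and has no spectral gap. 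To separate them up to coarse equivalence, I would tune a coarse invariant — for instance the growth function, the separation profile, or simply the sequence of cardinalities $|X_n|$ together with their local combinatorics — so that distinct $S$ yield distinct invariants; choosing the building blocks at stage $n$ to have wildly different sizes (e.g. along a sufficiently sparse subsequence of indices) makes the assignment $S\mapsto$ (coarse type of $X^{(S)}$) injective on an uncountable family of $S$'s, since a coarse equivalence between coarse disjoint unions must eventually match the pieces up to bounded distortion.

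The main obstacle I expect is the coarse-inequivalence bookkeeping rather than the counterexample property: one must choose the interleaving so that the invariant used to distinguish the $X^{(S)}$ is genuinely coarse (insensitive to the linearly-small perturbations that relate an expander to its asymptotic-expander modification, by the robustness discussed in Section~\ref{sec:examples}) yet still fine enough to detect the combinatorial choice encoded by $S$. A clean way around this is to make the two building blocks at stage $n$ differ not in their internal geometry at all but in a coarsely visible global feature — e.g. attach long disjoint ``tails'' of length $\ell_n$ only in the $n\notin S$ case, with $\ell_n\to\infty$ fast — so that any coarse equivalence $X^{(S)}\to X^{(S')}$ would have to respect these tails and hence force $S=S'$; the tails do not affect either the fibered coarse embedding or the presence of weakly embedded asymptotic expanders. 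The remaining verifications (bounded geometry, $|X_n|\to\infty$, the fibered embedding for the glued space) are routine and follow from the corresponding facts for the pieces.
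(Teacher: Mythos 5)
Your first step coincides with the paper's: invoke Theorem~\ref{thm:intro:counterexamples} on the large\-/girth splice of Example~\ref{eg:Schreier.triv.intersection} (large girth giving the fibered coarse embedding, the space itself being weakly embedded asymptotic expanders), and observe as in Remark~\ref{rmk:counterexamples.no.gap} that a spectral gap would force expansion via the Cheeger inequality, contradicting the presence of \fol sets. Where you genuinely diverge is in producing \emph{uncountably many} coarse classes. The paper does not interleave: it imports from the literature the existence of uncountably many pairwise non\-/coarsely\-/equivalent expander families \cite{fisher_rigidity_2019}, splices each of them, and then uses the fact that a coarse equivalence of coarse disjoint unions decomposes (up to reindexing) into uniform coarse equivalences of the pieces \cite{khukhro_expanders_2017} together with the bottleneck created by the splice to push a hypothetical coarse equivalence down to the expander parts, contradicting their inequivalence. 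Your scheme instead manufactures uncountably many spaces from a \emph{single} expander family by encoding a subset $S\subseteq\NN$ into the choice of block at each stage. This buys independence from the rigidity input of \cite{fisher_rigidity_2019}, but it still needs the same decomposition result for coarse equivalences of coarse disjoint unions, so it is not lighter on that front; and it requires more careful bookkeeping, which is exactly where your sketch is loose.

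Two points need repair. First, the assignment $S\mapsto X^{(S)}$ can never be injective up to coarse equivalence: changing finitely many blocks of a coarse disjoint union (each of finite diameter) yields a coarsely equivalent space, so ``force $S=S'$'' is an overclaim. You must restrict to an uncountable family of subsets with pairwise infinite symmetric difference (equivalently, note that each coarse class absorbs only countably many $S$), and also insist that each $S$ leaves infinitely many ``modified'' stages, or the resulting space has a spectral gap after all. Second, the distinguishing invariant should not be left at the level of ``growth, separation profile, or local combinatorics'', nor at ``the equivalence must respect the tails'': the clean argument is that, once wildly different block sizes force the matching of pieces to be eventually index\-/preserving, a uniform coarse equivalence between the expander block and the de\-/expanderised block along infinitely many indices would transport the expander property (coarse invariance of expansion, \cite{intro}), contradicting the unbounded \fol sets in the modified blocks. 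With that substitution your tail device becomes unnecessary (and if you do keep tails, their lengths must stay sublinear in the block size if you also want the resulting sequences to remain asymptotic expanders, as in the statement of Corollary~\ref{cor:uncountably.many.counterexamples}). With these fixes your route is a valid, essentially self\-/contained alternative to the paper's appeal to uncountably many inequivalent expander families.
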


Asymptotic expanders that are not expanders cannot be ``highly symmetric''. More precisely, we prove the following:

\begin{alphthm}[Theorem~\ref{thm:symmetric.as.exp.iff.exp} and Remark~\ref{vertex-tran}]\label{thm:intro:symmetric.as.exp}
 If $\braces{X_n}_{n\in\NN}$ is a sequence of finite connected vertex-transitive graphs with uniformly bounded degree, then they are a sequence of asymptotic expanders if and only if they are a sequence of expanders.
\end{alphthm}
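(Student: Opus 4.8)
The plan is to exploit the Structure Theorem (Theorem~\ref{thm:intro.structure.graph}) together with the vertex-transitivity in order to upgrade the ``uniform exhaustion by expanders'' into genuine expansion of the whole graphs. One direction is trivial: expanders are always asymptotic expanders. So assume $\{X_n\}_{n\in\NN}$ is a sequence of asymptotic expanders of uniformly bounded degree. By Theorem~\ref{thm:intro.structure.graph} there are constants $c_0>0$ and $R_0>0$, and for each $n$ a subset $Y_n\subset X_n$, such that $|Y_n|/|X_n|\to 1$ and the induced subgraphs on $Y_n$ (say with the metric restricted from $X_n$, up to the scale $R_0$) form a sequence of expanders; equivalently, $|\partial_{R_0} A|\geq c_0|A|$ for every $A\subset Y_n$ with $|A|\leq \frac12|Y_n|$. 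The goal is to promote this to: there exist $c>0$, $R>0$ with $|\partial_R A|\geq c|A|$ for \emph{every} $A\subset X_n$ with $|A|\leq\frac12|X_n|$, which (combined with uniformly bounded degree, allowing one to replace $\partial_R$ by the $1$-boundary at the cost of worsening the constant) is exactly the expander condition.

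The key step, where vertex-transitivity enters, is the following averaging idea. Fix $n$ and a subset $A\subset X_n$ with, say, $|A|\leq\frac12|X_n|$. Let $G_n=\aut(X_n)$ act transitively on the vertex set. For a uniformly random $g\in G_n$, the translate $gA$ is a random subset with $\mathbb{E}|gA\cap Y_n| = |A|\cdot|Y_n|/|X_n|$, which is close to $|A|$ since $|Y_n|/|X_n|\to 1$. Hence for $n$ large there exists $g\in G_n$ with $|gA\cap Y_n|\geq \frac34|A|$, and moreover $|gA\setminus Y_n|\leq |X_n\setminus Y_n|$ is a vanishing fraction of $|X_n|$, so $|gA\cap Y_n|\leq\frac12|Y_n|$ provided $|A|$ is not too large (I will need to treat the range $|A|$ near $\frac12|X_n|$ separately, see below). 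Applying the expander inequality on $Y_n$ to the set $gA\cap Y_n$ gives $|\partial_{R_0}(gA\cap Y_n)\cap Y_n|\geq c_0|gA\cap Y_n|\geq \frac{3c_0}{4}|A|$. Since $\partial_{R_0}(gA\cap Y_n)\subseteq \partial_{R_0}(gA)$, we get $|\partial_{R_0}(gA)|\geq \frac{3c_0}{4}|A|$, and translating back by $g^{-1}$ (which is an isometry, hence preserves $R_0$-boundaries and cardinalities) yields $|\partial_{R_0}A|\geq \frac{3c_0}{4}|A|$. This is the desired Cheeger-type inequality with $R=R_0$ and $c=\frac{3c_0}{4}$, uniformly in $n\gg 0$; the finitely many small $n$ are handled since each $X_n$ is a fixed finite connected graph.

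The main obstacle is the range $\frac12|Y_n| \lesssim |A| \leq \frac12|X_n|$, where after translating into $Y_n$ the set $gA\cap Y_n$ may exceed half of $Y_n$ and the expander inequality no longer applies directly. The fix is to work with the complement: if $|A|\leq\frac12|X_n|$ but $|gA\cap Y_n|>\frac12|Y_n|$, then $|Y_n\setminus gA| < \frac12|Y_n|$, and since $|Y_n\setminus gA| = |Y_n|-|gA\cap Y_n|$ is still a definite fraction of $|Y_n|$ (because $|A|$ is bounded away from $|X_n|$, as $|A|\leq\frac12|X_n|$ and $|X_n\setminus Y_n|=o(|X_n|)$), we may apply the expander inequality to $Y_n\setminus gA$ inside $Y_n$ to bound $|\partial_{R_0}(Y_n\setminus gA)\cap Y_n|$ from below by a constant times $|Y_n\setminus gA|\gtrsim |X_n| \gtrsim |A|$. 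A point of $Y_n$ in $\partial_{R_0}(Y_n\setminus gA)$ but not in $Y_n\setminus gA$ lies in $gA$ and is at distance $\leq R_0$ from the complement of $gA$, hence lies in $\partial_{R_0}(X_n\setminus gA)$; so this produces the bound $|\partial_{R_0}(X_n\setminus gA)|\gtrsim |A|$, and translating back gives $|\partial_{R_0}(X_n\setminus A)|\gtrsim |A| \geq |X_n\setminus A|$ up to a constant — again a Cheeger inequality (with the roles of $A$ and its complement interchanged, which is harmless for expansion). One must bookkeep the constants carefully across the two cases, but no case is genuinely hard. Finally, Remark~\ref{vertex-tran} presumably observes that the argument only used a group of isometries acting with bounded-size orbits / transitively, so it extends verbatim to the vertex-transitive (rather than $\aut$-transitive) setting.
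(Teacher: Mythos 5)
Your argument is correct, but it takes a genuinely different route from the paper's. The paper does not invoke the Structure Theorem here at all: it proves Lemma~\ref{lem:big.Cheeger}, a purely combinatorial submodularity-type estimate for the $1$\-/boundary ($|\partial(A\cup B)|+|\partial(A\cap B)|\le|\partial A|+|\partial B|$ for Cheeger-realising sets), which shows that any finite graph either has a Cheeger-realising set of size in $(|X|/4,|X|/2]$ or a \emph{unique} maximal Cheeger-realising set; vertex-transitivity enters only through Remark~\ref{vertex-tran} (translating a maximal Cheeger-realising set by automorphisms yields a second one, so the first alternative must occur), and then asymptotic expansion applied at the single scale $\alpha=1/4$ forces $h(X_n)>c_{1/4}$ in two lines. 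You instead combine Theorem~\ref{thm:structure.of.as.exp.} with an averaging over $\aut(X_n)$: for uniformly random $g$ one has $\mathbb{E}\,|gA\cap Y_n|=|A|\,|Y_n|/|X_n|$, so some translate of $A$ lies mostly in the expander core $Y_n$ (this is exactly where transitivity is needed, since otherwise small sets could hide in $X_n\smallsetminus Y_n$), with the complement trick handling sets of size near $\tfrac12|X_n|$; bounded degree then converts inner/large-radius boundaries into the usual $1$\-/boundary. What each buys: the paper's hypothesis is strictly weaker than transitivity (non-uniqueness of the maximal Cheeger-realising set suffices, and Theorem~\ref{thm:symmetric.as.exp.iff.exp} even allows passing to uniformly coarsely equivalent sequences, which matters for Corollary~\ref{cor:symmetric.Roe}), while your proof needs the full transitive automorphism action but is quantitative and self-contained once Theorem~\ref{thm:intro.structure.graph} is available. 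Two small repairs to your write-up: the Structure Theorem gives, for each $k$, one constant $c_k$ and subsets with $|Y_{n,k}|\ge(1-\alpha_k)|X_n|$ --- it does not provide a single expansion constant simultaneously with $|Y_n|/|X_n|\to 1$ --- but your argument only needs one fixed $k$ with, say, $\alpha_k\le 1/4$, which also makes the separate treatment of finitely many small $n$ unnecessary; and the inclusion you use should be the relative one, $\partial^{Y_n}_{R_0}(gA\cap Y_n)\subseteq\partial^{X_n}_{R_0}(gA)$, since a point of the ambient boundary of $gA\cap Y_n$ could lie in $gA\smallsetminus Y_n$.
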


Note that the condition of being vertex\=/transitive is a rather generous hypothesis (Theorem~\ref{thm:symmetric.as.exp.iff.exp} has a more refined statement).

\begin{rmk}
 It is interesting to note that the typical counterexamples to the coarse Baum--Connes conjecture are constructed using box spaces. On the contrary, since expanders and asymptotic expanders are invariant under coarse equivalence \cite{intro}, Theorem~\ref{thm:intro:symmetric.as.exp} implies that the counterexamples constructed in Corollary~\ref{cor:intro:main.corollary} are not even coarsely equivalent to box spaces.
\end{rmk}

Combining Theorem~\ref{thm:intro:averaging.operator.in.Roe} and Theorem~\ref{thm:intro:symmetric.as.exp}, we are able to obtain a $C^*$-algebraic characterisation of expanders for finite vertex-transitive graphs:

\begin{alphcor}[Corollary~\ref{cor:symmetric.Roe}]
 A sequence of finite connected vertex-transitive graphs with uniformly bounded degree (\emph{e.g.} any \emph{box space}) is a family of expanders if and only if the averaging projection belongs to the uniform Roe algebra.
\end{alphcor}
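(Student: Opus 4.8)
The plan is to deduce this corollary directly from the two structure/characterisation results stated earlier, namely Theorem~\ref{thm:intro:averaging.operator.in.Roe} (equivalence of $P_X \in C^*_u(X)$ with $\{X_n\}$ being asymptotic expanders) and Theorem~\ref{thm:intro:symmetric.as.exp} (for vertex-transitive graphs of uniformly bounded degree, asymptotic expanders coincide with expanders). First I would fix a sequence $\{X_n\}_{n\in\NN}$ of finite connected vertex-transitive graphs with uniformly bounded degree and let $X = \bigsqcup_n X_n$ be its coarse disjoint union with averaging projection $P_X$. One preliminary point to dispatch is the standing hypothesis $|X_n|\to\infty$ needed to invoke the two theorems: if $|X_n|$ does not tend to infinity then, after discarding the finitely many isomorphism types occurring infinitely often is not quite enough, so instead I would note that a bounded subsequence contributes only finitely many vertices up to finite multiplicity and the statement is then vacuous or trivial on that part; the substantive content is entirely in the regime $|X_n|\to\infty$, and I would simply restrict to that case (or remark that a sequence of finite graphs with bounded cardinality is trivially both a sequence of expanders and has $P_X$ compact, hence in $C^*_u(X)$, so both sides of the iff hold).

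The main argument is then a two-line chain of equivalences. Assuming $|X_n|\to\infty$: by Theorem~\ref{thm:intro:averaging.operator.in.Roe} (the equivalence of (1) and (3) there), $P_X$ belongs to $C^*_u(X)$ if and only if $\{X_n\}$ is a sequence of asymptotic expanders. Since each $X_n$ is vertex-transitive with uniformly bounded degree, Theorem~\ref{thm:intro:symmetric.as.exp} applies and tells us that $\{X_n\}$ is a sequence of asymptotic expanders if and only if it is a sequence of expanders. Concatenating the two biconditionals yields: $P_X \in C^*_u(X)$ if and only if $\{X_n\}$ is a sequence of expanders, which is exactly the assertion of the corollary. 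The parenthetical ``\emph{e.g.} any box space'' is not a separate claim requiring proof—box spaces of residually finite groups are sequences of finite connected vertex-transitive (indeed Cayley) graphs with uniformly bounded degree, so they form a special case of the hypothesis; I would include a one-sentence remark to that effect, possibly with a reference.

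The only genuine obstacle I anticipate is bookkeeping around degenerate hypotheses rather than anything mathematical: making sure the bounded-degree and $|X_n|\to\infty$ assumptions line up with the precise statements of Theorems~\ref{thm:intro:averaging.operator.in.Roe} and \ref{thm:intro:symmetric.as.exp} (the former is stated for metric spaces of uniformly bounded geometry, and a sequence of finite connected graphs with uniformly bounded degree is indeed of uniformly bounded geometry when endowed with the path metric, so this is immediate), and handling the $|X_n|\not\to\infty$ case cleanly. Since both ingredients are already established in the body of the paper, the proof of the corollary is essentially a formal deduction and should occupy only a few lines.
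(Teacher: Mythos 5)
Your proposal is correct and is exactly the paper's argument: the corollary is obtained by concatenating Theorem~\ref{thm:intro:averaging.operator.in.Roe} ($P_X\in C^*_u(X)$ iff asymptotic expanders) with Theorem~\ref{thm:intro:symmetric.as.exp} (for vertex-transitive graphs of uniformly bounded degree, asymptotic expanders iff expanders, via the observation that such graphs admit more than one maximal subset realising the Cheeger constant). Your extra care about the $|X_n|\to\infty$ hypothesis is harmless but unnecessary, since that assumption is built into the definitions of both expander sequences and the averaging projection setting.
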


\subsection*{Overview}
This paper has two fairly distinct souls: on the one side there is a combinatorial/geometric approach that is used to prove the Structure Theorems and provide examples, on the other side most of the applications have a strong operator-algebraic flavour. This peculiarity is reflected throughout the text, as various elements are explained to a level of detail that would be ill-suited to a paper aimed at specialists in the field.
The paper is structured as follows.

In Section \ref{sec:preliminaries}, we recall the basic notions needed to discuss expanders, asymptotic expanders, Roe algebras, quasi-local operators, and averaging projections.

In Section \ref{sec:structure}, we establish the fundamental structure theorems for asymptotic expanders, showing that they can be ``exhausted'' by a sequence of spaces displaying expander behaviour. This is done first in the case of graphs, and then more generally for finite metric spaces.

In Section \ref{sec:non.coarse.embeddability}, we give a comprehensive account of the lack of coarse embeddings into Banach spaces for expanders and asymptotic expanders.

Section \ref{sec:examples} is devoted to providing examples of asymptotic expanders that are not expanders.
We also discuss how to adapt these examples to realise them as Schreier coset graphs, possibly with large girth. We also briefly discuss the existence of uncountably many such examples up to coarse equivalence.

In Section \ref{sec:ghosts},
we show that the averaging operator $P_X$ belongs to the uniform Roe algebra if and only if $X$ is a sequence of asymptotic expanders (Theorem~\ref{thm:intro:averaging.operator.in.Roe}). Thanks to the work of Finn-Sell, this is also the main step in the proof of Theorem~\ref{thm:intro:counterexamples}.

In Section \ref{sec:homog}, we show that a graph satisfying a weak symmetry condition---namely, admitting more than one maximal subset realising the Cheeger constant---must have a sufficiently big subset realising the Cheeger constant. We use this to give a class of examples for which being an asymptotic expander is equivalent to being an expander.

In Section \ref{sec:questions}, we mention some questions arising from our work.

\subsection*{Acknowledgments}
We are very grateful to Masato Mimura for pointing out to us Gross' trick (Remark~\ref{rmk:gross.trick}) and for explaining several subtleties concerning $p$\=/Poincaré inequalities in Section \ref{sec:non.coarse.embeddability}. We would also like to thank Piotr Nowak and J\'{a}n \v{S}pakula for several illuminating discussions and comments. Finally, we thank Amine Marrakchi for bringing \cite{marrakchi_strongly_2018} to our attention and the anonymous referee for his/her helpful suggestions and comments.

\section{Preliminaries}\label{sec:preliminaries}

This paper is aimed at a rather diverse audience. For this reason, and to fix the notation, we will now recall some very basic notions.

\subsection{Basic Notation}\label{ssec:basic.notation}
Let $(X,d)$ be a discrete metric space, $x\in X$ and $R>0$. Denote by $B(x,R)$ the closed ball in $X$ with centre $x$ and radius $R$. For any $A \subseteq X$, denote by $|A|$ the cardinality of $A$, and $\partial_R A=\{x\in X\backslash A\mid d(x,A)\leq R\}$ the \emph{(outer) $R$-boundary of $A$}. Recall that a discrete metric space $(X,d)$ has \emph{bounded geometry} if $\sup_{x\in X}|B(x,R)|$ is finite for each $R>0$. For $A \subseteq X$, we use $\chi_A$ for the \emph{characteristic function} of $A$ and the symbol $\delta_x$ for $\chi_{\{x\}}$ for $x\in X$.

Let $Y \subseteq X$ be a subset equipped with the induced metric. In order to avoid confusion, given a subset $A\subseteq Y\subseteq X$, we denote by $\partial^X_R A$ the $R$-boundary of $A$ in $X$, and by $\partial^Y_R A$ the $R$-boundary of $A$ in $Y$. Note that $\partial^Y_R A=\partial^X_R A\cap Y$ and that $\partial^X_R A=\partial^Y_R A\sqcup(\partial^X_R A\smallsetminus Y)$.

When $X$ is an unoriented simplicial graph---\emph{i.e.,} without loops and multiple edges, we always equip $X$ with its path\=/metric. Note that to keep the notation simple, we identify graphs with their sets of vertices. For any vertex $x\in X$, we define its \emph{degree} to be the number of points adjacent to $x$. A graph is said to have \emph{degree bounded by $D$} for some $D>0$ if any vertex has degree bounded by $D$. It is clear that a graph has bounded degree \emph{if and only if} it has bounded geometry. For $A\subseteq X$, we write $\partial A$ for the $1$\=/boundary $\partial_1 A$ of $A$. A subset $Y\subset X$ can be naturally seen as a (possibly disconnected) subgraph of $X$ by keeping all the edges having both endpoints in $Y$ (that is, we identify $Y$ with the full subgraph spanned by it).

\begin{rmk}
 In the context of graphs there is a small subtlety. A subgraph $Y\subset X$ comes with a natural path metric, which need not coincide with the induced metric from $X$. In particular, $\partial_R^YA$ may change depending on the metric under consideration. Fortunately, this is not the case when $R=1$ which we focus on, because $Y$ is a full subgraph.
\end{rmk}

A sequence of graphs $\braces{X_n}_{n\in\NN}$ is said to have \emph{large girth} if the length of the shortest closed loop in $X_n$ tends to infinity as $n\to\infty$. If $F_d$ is the (non-abelian) free group on $d$ generators and $\Gamma <F_d$ is a subgroup, then the \emph{Schreier coset graph} is the graph having left cosets as vertices and edges of the form $(g\Gamma ,sg\Gamma )$ for $s$ in the generating set of $F_d$. The action of $F_d$ by right multiplication descends to a (vertex-transitive) action on the Schreier graph if and only if $\Gamma $ is normal in $F_d$. If $\braces{\Gamma _n}_{n\in\NN}$ is a sequence of finite index subgroups of $F_d$ so that the sequence of Schreier graphs has large girth then $\bigcap_{n\in\NN}\Gamma _n=\emptyset$. If $\Gamma _{n+1}\subset \Gamma _n$ for every $n$, then the converse holds as well.

In this paper, we mainly focus on a sequence of finite metric spaces $\{(X_n,d_n)\}_{n\in \N}$ with $|X_n| \to \infty$, and their \emph{coarse disjoint union} $(X,d)$. In other words, $X=\bigsqcup_{n \in \N} X_n$ as a set, the restriction of the metric $d$ on each $X_n$ coincides $d_n$ and it further satisfies:
\[
d(X_n, X_m) \geq n+m+\diam(X_n)+\diam(X_m).
\]
Note that the coarse disjoint union $\bigsqcup_{n \in \N} X_n$ has bounded geometry if and only if the metric spaces have \emph{uniformly} bounded geometry.

A function $f\colon X\to Y$ between metric spaces is a \emph{coarse embedding} if there are two unbounded increasing \emph{control functions} $\rho_-,\rho_+\colon[0,\infty)\to[0,\infty)$ such that $\rho_-(d_X(x,x'))\leq d_Y(f(x),f(x'))\leq \rho_+(d_X(x,x'))$ for every $x,x'\in X$.
A coarse embedding $f\colon X\to Y$ is a \emph{coarse equivalence} if there exists a coarse embedding $g\colon Y\to X$ such that $f\circ g$ and $g\circ f$ are at bounded distance from the identity function.
If $E$ is a Banach space, $\braces{X_n}_{n\in\NN}$ a sequence of finite metric spaces and $X$ their coarse disjoint union, then $X$ admits a coarse embedding in $E$ if and only if the spaces $X_n$ \emph{uniformly} coarsely embed into $E$---that is, there are coarse embeddings $f_n\colon X\to E$ having the same control functions $\rho_-,\rho_+$ for every $n\in\NN$.

\begin{de}\label{de:weak.embedding} Following \cite{arzhantseva_relative_2015}, we say that a sequence of finite metric spaces $\{X_n\}_n$ admits a \emph{weak embedding} in a metric space $Y$ if there are $L$\=/Lipschitz functions $f_n\colon X_n\to Y$ such that for every $R>0$,
 \[
  \lim_{n\to\infty}\sup_{x\in X_n}\frac{\abs{f_n^{-1}(B(f_n(x),R))}}{\abs{X_n}}=0.
 \]
\end{de}

\subsection{\fol sets, expanders and asymptotic expanders}

\begin{de}
Let $X$ be a finite graph and $c>0$. A set $A\subset X$ is \emph{$c$\=/\fol} if $\abs{A}\leq\frac{1}{2}\abs{X}$ and $\abs{\partial A}\leq c\abs{A}$. A graph $X$ is a \emph{$c$\=/expander} if it has no non\=/empty $c$\=/\fol sets. The \emph{Cheeger constant} of $X$ is
 \[
  h(X)=\min\Bigbraces{\frac{\abs{\partial A}}{\abs{A}}\Bigmid A\subset X,\ 0<\abs{A}\leq\frac{1}{2}\abs{X}}
  =\min\bigbraces{c\bigmid \text{$\exists$ non\=/empty $c$\=/\fol set}}.
 \]
 Equivalently, $h(X)=\sup\braces{c\mid \text{$X$ is a $c$\=/expander}}$\footnote{To be precise, if $X$ is a disconnected graph then it is not a $c$\=/expander for any $c$, and we are hence taking the supremum of the empty set. In this case, we declare such a supremum to be $0$.}.
\end{de}

\begin{de}\label{de:expanders}
 A sequence of graphs $\braces{X_n}_{n\in\NN}$ with uniformly bounded degrees and increasing cardinalities is a \emph{sequence of expander graphs} if there exists a $c>0$ such that they are all $c$\=/expanders. That is, there is a $c>0$ small enough so that every $X_n$ does not have non\=/empty $c$\=/\fol sets (this condition can be rephrased as ``$X_n$ satisfies a linear isoperimetric inequality with constant $c$'').
\end{de}

\begin{rmk}
 In order to avoid trivialities, one would usually insist that \fol sets are non\=/empty. We decided not to do so because in this way every finite graph has a (possibly empty) maximal $c$\=/\fol set.
\end{rmk}

Recall the definition of asymptotic expanders (condition (AsEx) in the introduction); this notion admits two equivalent formulations, as follows.

\begin{de}[{\cite[Definition 3.9]{intro}}]\label{def: asymptotic expanders}
A sequence of finite metric spaces $\{X_n\}_{n \in \N}$ with $|X_n| \to \infty$ is said to be a sequence of \emph{asymptotic expanders} (or \emph{asymptotic expander graphs} when the $X_n$'s are connected graphs) if the following equivalent conditions (see \cite[Theorem 3.11]{intro}) hold:

\noindent\parbox{3.8 em}{(AsEx)}\parbox{\textwidth-3.8 em}{
    for every $\alpha \in (0,\frac{1}{2}]$, there exist $c_\alpha\in (0,1)$ and $R_\alpha>0$ such that for any $n \in \N$ and $A \subseteq X_n$ with $\alpha|X_n| \leq |A| \leq |X_n|/2$, we have $|\partial_{R_\alpha} A| > c_\alpha|A|$.}

 \noindent\parbox{3.8 em}{(AsEx')}\parbox{\textwidth-3.8 em}{
there exists $c\in (0,1)$, such that for any $\alpha \in (0,\frac{1}{2}]$, there exists $R_\alpha>0$ such that for any $n \in \N$ and $A \subseteq X_n$ with $\alpha|X_n| \leq |A| \leq |X_n|/2$, we have $|\partial_{R_\alpha} A| > c|A|$.}
\end{de}

\begin{rmk}
 A sequence of expander graphs is obviously a sequence of asymptotic expanders (graphs are seen as metric spaces when equipped with the path metric). As a matter of fact, the notion of asymptotic expanders is a substantial relaxation of that of expander in at least three respects. Firstly, the metric spaces under consideration need not be graphs (they need not even be geodesic metric spaces). Secondly, the $X_n$ need not have uniformly bounded geometry (so that, even if the $X_n$ were finite graphs, they could have increasing degrees). Finally, and perhaps most obviously, we are not requiring that the metric spaces satisfy a Cheeger inequality on the nose, but we allow some room for linearly-small subsets.
\end{rmk}

In the case of graphs with uniformly bounded degrees, the notion of asymptotic expanders is equivalent to the following.

\begin{lem}\label{lem: equiv def for asymptotic expanders(graph)}
Let $\{X_n\}_{n\in \NN}$ be a sequence of connected graphs with degrees uniformly bounded by $D$ and $\abs{X_n}\to\infty$. Then it is a sequence of asymptotic expander graphs \emph{if and only if} the following condition holds:

\noindent\parbox{3.8 em}{(AsEx'')}\parbox{\textwidth-3.8 em}{
for every $\alpha \in (0,\frac{1}{2}]$, there exists $c_\alpha\in (0,1)$ such that for every $n\in\NN$ and every subset $A\subset X_n$ with $\alpha\abs{X_n}\leq\abs{A}\leq\frac{1}{2}\abs{X_n}$, we have $\abs{\partial A}>c_\alpha\abs{A}$.}
\end{lem}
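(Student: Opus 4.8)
Since ``sequence of asymptotic expander graphs'' means exactly that (AsEx) holds (which by \cite[Theorem 3.8]{intro} is the same as (AsEx$'$)), the plan is simply to prove that, under the standing hypotheses, condition (AsEx$''$) is equivalent to (AsEx). One implication is essentially a tautology: given $\alpha\in(0,\frac12]$, condition (AsEx$''$) supplies some $c_\alpha\in(0,1)$ with $\abs{\partial A}>c_\alpha\abs{A}$ for every $A\subseteq X_n$ in the prescribed size range, and since $\partial A=\partial_1A\subseteq\partial_RA$ for every $R\geq 1$, taking $R_\alpha:=1$ yields (AsEx) verbatim. So the content is in the reverse direction, and the whole point is to replace the $R_\alpha$-boundary appearing in (AsEx) by the $1$-boundary.

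The key step is the elementary observation that in a graph of degree $\leq D$ the $R$-boundary of a set is controlled by its $1$-boundary, with a multiplicative constant depending only on $D$ and $R$. Indeed, if $y\in\partial_R A$, choose a geodesic from $y$ to $A$; its penultimate vertex $v$ lies in $\partial_1 A$ and satisfies $d(y,v)\leq R-1$. Hence
\[
\partial_R A\ \subseteq\ \bigcup_{v\in\partial_1 A}B(v,R-1),
\]
and therefore, writing $C_{D,R}:=1+D+\dots+D^{R-1}$,
\[
\abs{\partial_R A}\ \leq\ \abs{\partial_1 A}\cdot\max_{v}\abs{B(v,R-1)}\ \leq\ C_{D,R}\,\abs{\partial A}.
\]
This is exactly where uniform boundedness of the degrees is used; without it $R$-boundaries can be arbitrarily larger than $1$-boundaries, and the equivalence fails.

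To finish, assume (AsEx) and fix $\alpha\in(0,\frac12]$; let $c_\alpha\in(0,1)$ and $R_\alpha\geq 1$ (we may clearly take $R_\alpha\geq 1$) be as in (AsEx). For any $n$ and any $A\subseteq X_n$ with $\alpha\abs{X_n}\leq\abs{A}\leq\frac12\abs{X_n}$, the displayed inequality gives
\[
\abs{\partial A}\ \geq\ \frac{\abs{\partial_{R_\alpha}A}}{C_{D,R_\alpha}}\ >\ \frac{c_\alpha}{C_{D,R_\alpha}}\,\abs{A},
\]
so setting $c_\alpha':=\min\!\braces{c_\alpha/C_{D,R_\alpha},\ \tfrac12}\in(0,1)$ establishes (AsEx$''$). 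I do not anticipate a genuine obstacle here: the only minor points to be careful about are the neighbourhood-counting bound above and the truncation of the resulting constant into the interval $(0,1)$, both routine. (Note that connectedness of the $X_n$ plays no role in this particular argument; it is merely inherited from the standing setup, and the hypothesis $\abs{X_n}\to\infty$ is likewise not needed for the equivalence itself.)
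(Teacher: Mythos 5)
Your proposal is correct and follows essentially the same route as the paper: the (AsEx'')$\Rightarrow$(AsEx) direction is immediate from $\partial A\subseteq\partial_R A$, and the converse rests on the bounded-degree bound $\abs{\partial_R A}\leq(1+D+\cdots+D^{R-1})\abs{\partial A}$, which the paper obtains by iterating $\abs{\partial(A\cup\partial A)}\leq D\abs{\partial A}$ while you cover $\partial_R A$ by balls of radius $R-1$ centred on $\partial_1 A$ — a cosmetic difference yielding the same constant and the same rescaled $c_\alpha$.
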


\begin{proof}
We only need to show that (AsEx) implies (AsEx''). By Definition \ref{def: asymptotic expanders}, for any $\alpha \in (0,\frac{1}{2}]$, there exist $c'_\alpha\in (0,1)$ and $R_\alpha>0$ such that for any $n \in \N$ and $A \subseteq X_n$ with $\alpha|X_n| \leq |A| \leq |X_n|/2$, we have $|\partial_{R_\alpha} A| > c'_\alpha|A|$. Note that for every subset $A\subset X$ we have $\abs{\partial(A\cup \partial A)}\leq D\abs{\partial A}$ because every vertex adjacent to $\partial(A\cup \partial A)$ must in fact be adjacent to $\partial A$. Therefore for any $R\in \NN$, we have
  \[
   \abs{\partial_R A}\leq (1+D+D^2+\cdots +D^{R-1})\abs{\partial A}< D^R \abs{\partial A}.
  \]
Taking $c_\alpha\coloneqq c'_\alpha D^{-R_\alpha}$, we finish the proof.
\end{proof}

\subsection{Roe algebras and quasi-local algebras}\label{ssec:Roe.algebras}

Let $X$ be a discrete metric space with bounded geometry, and $\H$ be a Hilbert space. An operator $T\in \B(\ell^2 (X;\H))$ can be viewed as an $X$-by-$X$ matrix $[T_{x,y}]_{x,y \in X}$ with $T_{x,y}\in \B(\H)$. Define the \emph{support} of $T$ to be $\supp(T):=\{(x,y)\in X \times X: T_{x,y} \neq 0\}$. We say that $T\in \B(\ell^2 (X;\H))$ has \emph{finite propagation} if there exists some constant $R >0$ such that $d(x,y) \leq R$ for every $(x,y) \in \supp(T)$. The smallest number $R$ satisfying this condition is called the \emph{propagation of $T$}. We say that $T$ is \emph{locally compact} if $T_{x,y}$ is a compact operator on $\H$ for every $x,y \in X$. Similarly, given discrete metric spaces $X,Y$, Hilbert spaces $\H, \H'$, and a bounded operator $V\colon \ell^2(X;\H) \to \ell^2(Y;\H')$, we define its \emph{support} to be $\supp(V)\coloneqq\{(y,x) \in Y \times X\mid \delta_y V \delta_x \neq 0\}$.

There is a natural multiplication representation $\rho\colon \ell^\infty(X) \hookrightarrow \B(\ell^2 (X;\H))$ defined by point-wise multiplication, \emph{i.e.} $\rho(f)(\xi)(x)\coloneqq f(x)\xi(x)$ for any $f\in \ell^\infty(X)$ and $\xi \in \ell^2 (X;\H)$.
In matrix notation, $\rho(f)$ is given by $[f(x)\delta\paren{x,y}\id_{\H}]_{x,y}$ where $\id_{\H}\in \B(\H)$ is the identity operator and $\delta\paren{x,y}=1$ if $x=y$ and $0$ otherwise.
In what follows, we will identify $\ell^\infty$ with its image in $\B(\ell^2(X;\H))$ and simply write $f$ instead of $\rho(f)$.
In particular, given $T\in\B(\ell^2(X;\H))$, the operator $\chi_A T\chi_B$ is the composition $\rho(\chi_A)T\rho(\chi_B)$. In matrix form, the element in position $(x,y)$ of $\chi_A T\chi_B$ coincides with $T_{x,y}$ if $x\in A$ and $y\in B$, and it is the zero operator otherwise.

When we take $\H$ to be the complex numbers $\C$, it is clear that each operator is locally compact, and the set of all finite propagation operators in $\B(\ell^2 (X))$ is a $\ast$-algebra, called the \emph{algebraic uniform Roe algebra} of $X$ and denoted by $\C_u [X]$. Its operator-norm closure in $\B(\ell^2(X))$ is called the \emph{uniform Roe algebra} of $X$, denoted by $C^*_u (X)$.

Similarly, when we take $\H$ to be the infinite-dimensional separable Hilbert space, the set of all finite propagation \emph{locally compact} operators in $\B(\ell^2 (X; \H))$ (\emph{i.e} operators such that $T_{xy}$ is compact for every $x,y\in X$) is a $\ast$-algebra. Its operator-norm closure in $\B(\ell^2(X; \H))$ is called the \emph{Roe algebra} of $X$, denoted by $C^*(X)$.

\begin{rmk}\label{rmk:uniformRoe.embeds.Roe}
 It is clearly possible to isometrically embed $\B(\CCC)$ into the space of compact operators $\K(\CH)\subset\B(\CH)$ (for example, by tensoring with a rank\=/$1$ projection $P_0\colon\CH\to\C$).
 In turn, this gives us a (non-canonical) isometric embedding $\iota\colon C^*_u(X) \to C^*(X)$ of the uniform Roe algebra into the Roe algebra. In matrix form, one such embedding can be described as sending $T=[T_{x,y}]_{x,y \in X}$ to $\iota(T)\coloneqq [T_{x,y} \otimes P_0]_{x,y \in X}$, where $P_0$ is a fixed rank one projection.
\end{rmk}

\begin{defn}[\cite{Roe88, Roe96}]\label{def: quasi-locality}
Let $R,\varepsilon > 0$, and $\H$ be a Hilbert space. An operator $T \in \B(\ell^2(X; \H))$ is said to have $(R,\varepsilon)$-\emph{propagation} if for any $A, B\subseteq X$ such that $d(A,B) \geq R$, we have	
\[
\| \chi_A T\chi_B\|\leq \varepsilon.
\]
If for all $\varepsilon > 0$, there exists $R > 0$ such that $T$ has $(R,\varepsilon)$-\emph{propagation}, then we say that $T$ is \emph{quasi-local} (this definition makes sense also when $X$ has does not have bounded geometry).
\end{defn}

It is routine to check that the set of all locally compact quasi-local operators in $\B(\ell^2(X; \H))$ form a $C^*$-subalgebra. When $\H=\C$ this $C^*$\=/algebra is called the \emph{uniform quasi-local algebra} of $X$, denoted by $\Cq(X)$; when $\H$ is the infinite-dimensional separable Hilbert space, it is called \emph{quasi-local algebra} of $X$, denoted by $C^*_q(X)$.

Since finite propagation operators are quasi-local, the (uniform) Roe algebra is contained in the (uniform) quasi-local algebra.
It is an open question whether these containments are equalities (equality is known to hold in some special cases, \emph{e.g.} if $X$ has \emph{Yu's Property A} \cite{SZ18}).

\subsection{Averaging projections}\label{ssec:averaging.projections}

Given a finite subset $F\subset Y$, the \emph{averaging projection of $F$} is the operator $P_F\in\B(\ell^2(Y))$ obtained as the orthogonal projection onto the span of $\chi_F \in \ell^2(Y)$. In matrix form, it can be represented by:
\begin{equation*}
(P_F)_{x,y}=
\begin{cases}
  ~1/|F|, & \text{if }x,y \in F; \\
  ~0, & \mbox{otherwise}.
\end{cases}
\end{equation*}
If $X=\bigsqcup_{n\in\NN}X_n$, then $\ell^2(X)$ is the Hilbert sum $\bigoplus_{n\in\NN}\ell^2(X_n)$. We use the following:

\begin{defn}\label{defn: averaging projection}
If $(X,d)$ is a coarse disjoint union of a sequence of finite metric spaces $\{(X_n,d_n)\}_{n \in \N}$, we define the \emph{averaging projection of $X$} to be
$P_X\coloneqq \bigoplus_{n \in \N} P_{X_n}.$
\end{defn}

\begin{defn}[G. Yu]\label{de:ghost.operator}
Let $X$ be a discrete metric space, $\H$ a Hilbert space and $T \in \B(\ell^2(X;\H))$. We say $T$ is a \emph{ghost operator} if for any $\varepsilon,R>0$, there exists a bounded $F \subseteq X$ such that for any $(x,y) \notin F \times F$ we have $\|\chi_{B(x,R)}T\chi_{B(y,R)}\|<\varepsilon$.
\end{defn}

\begin{remark}\label{rmk:average.non.compact.ghost}
It is clear that the projection $P_X$ in Definition~\ref{defn: averaging projection} is a non-compact ghost operator in $\B(\ell^2(X))$ provided that $\abs{X_n}\to\infty$.
\end{remark}

It is well\=/known that if $X$ is a sequence of expanders then $P_X$ is in the uniform Roe algebra $C^*_u(X)$ (see e.g. \cite[Examples~5.3]{MR2871145} or \cite[Corollary~3.4]{intro}). In particular, the (uniform) Roe algebra of a sequence of expanders contains a non-compact ghost projection---this fact was the key to producing counterexamples to the coarse Baum--Connes conjecture.

\begin{rmk}
 It is beyond the scope of this paper to enter into the statement of the coarse Baum--Connes conjecture. For us, it will be enough to say that it is a coarse analogue of the classical Baum--Connes conjecture and that it predicts that some assembly maps are isomorphisms. This conjecture has been proven for various classes of metric space and disproven for a few others, most notably expanders and warped cones \cite{HLS02,sawicki_warped_2017}. The interested reader is referred to \cite{aparicio_baum-connes_2019} and references therein for more information about the conjecture.
\end{rmk}

We already remarked that elements of the (uniform) Roe algebra are quasi\=/local operators. It follows that $P_X$ is quasi\=/local if $X$ consists of a sequence of expanders. Furthermore, as already anticipated in the introduction, we have the following:

\begin{proposition}[{\cite[Theorem~B]{intro}}]
Let $\{X_n\}_{n\in \N}$ be a sequence of finite metric spaces with $|X_n|\rightarrow \infty$ and let $X$ be their coarse disjoint union. Then the associated averaging projection $P_X$ is quasi-local \emph{if and only if} $\{X_n\}_{n\in \N}$ is a sequence of asymptotic expanders.
\end{proposition}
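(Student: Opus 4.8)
The plan is to reduce quasi-locality of $P_X$ to a purely combinatorial condition on the $X_n$, and then to prove the two implications by elementary counting, with (AsEx) entering through an iteration of $R$-boundary estimates. (No bounded-geometry assumption is needed, as quasi-locality makes sense without one.) First I would observe that $P_X=\bigoplus_n P_{X_n}$ is block diagonal, so for $A,B\subseteq X$ and $A_n=A\cap X_n$, $B_n=B\cap X_n$ one has $\chi_AP_X\chi_B=\bigoplus_n\chi_{A_n}P_{X_n}\chi_{B_n}$. A one-line computation gives $\chi_{A_n}P_{X_n}\chi_{B_n}\xi=\abs{X_n}^{-1}\scal{\xi}{\chi_{B_n}}\chi_{A_n}$ for $\xi\in\ell^2(X_n)$, an operator of rank $\leq 1$ with norm $\sqrt{\abs{A_n}\abs{B_n}}/\abs{X_n}$; hence $\norm{\chi_AP_X\chi_B}=\sup_n\sqrt{\abs{A_n}\abs{B_n}}/\abs{X_n}$. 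Since $d$ restricts to the given metric on each $X_n$, the condition $d(A,B)\geq R$ is the same as $d(A_n,B_n)\geq R$ for every $n$. So $P_X$ is quasi-local if and only if: for every $\varepsilon>0$ there is $R>0$ such that $d(A_n,B_n)\geq R$ implies $\sqrt{\abs{A_n}\abs{B_n}}\leq\varepsilon\abs{X_n}$, for all $n$ and all $A_n,B_n\subseteq X_n$.

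\emph{Asymptotic expanders $\Rightarrow$ quasi-local.} Given $\varepsilon>0$, I would set $\lambda=\tfrac12\min\{\varepsilon^2,\tfrac12\}$ and take $c_\lambda,R_\lambda$ from (AsEx) with $\alpha=\lambda$. The key claim to establish is: if $\abs{A_n},\abs{B_n}\geq\lambda\abs{X_n}$, then $d(A_n,B_n)\leq 2k_0R_\lambda$, where $k_0=\lceil(\tfrac12-\lambda)/(c_\lambda\lambda)\rceil+1$. Once this is proved, $d(A_n,B_n)\geq R_\varepsilon:=2k_0R_\lambda+1$ forces $\min\{\abs{A_n},\abs{B_n}\}<\lambda\abs{X_n}$, so $\sqrt{\abs{A_n}\abs{B_n}}<\sqrt\lambda\,\abs{X_n}<\varepsilon\abs{X_n}$, and the reduction step gives quasi-locality. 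To prove the claim I would iterate neighbourhoods: put $A_n^{(k)}=\{x\in X_n:d(x,A_n)\leq kR_\lambda\}$ (and similarly $B_n^{(k)}$), note $\partial_{R_\lambda}(A_n^{(k)})\subseteq A_n^{(k+1)}\setminus A_n^{(k)}$ by the triangle inequality, and apply (AsEx) with $\alpha=\lambda$ to each $A_n^{(k)}$ as long as $\lambda\abs{X_n}\leq\abs{A_n^{(k)}}\leq\tfrac12\abs{X_n}$, obtaining $\abs{A_n^{(k+1)}}\geq(1+c_\lambda)\abs{A_n^{(k)}}\geq\abs{A_n^{(k)}}+c_\lambda\lambda\abs{X_n}$. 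Since $\abs{A_n}\geq\lambda\abs{X_n}$, this linear growth forces $\abs{A_n^{(k_0)}}>\tfrac12\abs{X_n}$ (else the increments would overshoot), and likewise $\abs{B_n^{(k_0)}}>\tfrac12\abs{X_n}$; then $A_n^{(k_0)}\cap B_n^{(k_0)}\neq\emptyset$ by pigeonhole, and a common point lies within $k_0R_\lambda$ of each of $A_n,B_n$, giving $d(A_n,B_n)\leq 2k_0R_\lambda$.

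\emph{Quasi-local $\Rightarrow$ asymptotic expanders.} I would argue by contraposition. If (AsEx) fails, fix a witnessing $\alpha\in(0,\tfrac12]$; applying the negation with the parameter $c=\tfrac12$, for \emph{every} $R>0$ there are $n$ and $A\subseteq X_n$ with $\alpha\abs{X_n}\leq\abs{A}\leq\tfrac12\abs{X_n}$ and $\abs{\partial_R A}\leq\tfrac12\abs{A}$. Taking $B:=X_n\setminus(A\cup\partial_R A)$, every point of $B$ has distance $>R$ to $A$, so $d(A,B)\geq R$, while $\abs{B}=\abs{X_n}-\abs{A}-\abs{\partial_R A}\geq\abs{X_n}-\tfrac32\abs{A}\geq\tfrac14\abs{X_n}$. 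Then by the reduction step, $\norm{\chi_AP_X\chi_B}\geq\sqrt{\abs{A}\abs{B}}/\abs{X_n}\geq\sqrt\alpha/2$. Since such a pair exists for arbitrarily large $R$, $P_X$ does not have $(R,\sqrt\alpha/3)$-propagation for any $R$, so it is not quasi-local.

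\emph{Expected obstacle.} The reduction and the contrapositive direction are essentially bookkeeping. The substance is in the forward direction, and the point that requires care is that it does \emph{not} suffice to bound a single one of $\abs{A_n},\abs{B_n}$: one has to run the argument symmetrically and then combine the two iterated neighbourhoods by pigeonhole. The iteration of (AsEx) is exactly what converts a linear lower bound on one $R$-boundary into the statement that a bounded-radius neighbourhood engulfs more than half of $X_n$ — which is, in embryo, the ``uniform exhaustion by expanders'' phenomenon isolated later in the paper.
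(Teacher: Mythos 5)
Your argument is correct, but there is nothing in this paper to compare it against line by line: the proposition is imported from \cite[Theorem~B]{intro} and is stated here without proof, so your write-up is in effect a self-contained replacement for the cited result. The reduction to the block-wise quantity $\sup_n\sqrt{\abs{A_n}\abs{B_n}}/\abs{X_n}$ is right (the rank-one computation and the norm of a block-diagonal operator are both standard), with one small wording caveat: $d(A,B)\geq R$ is \emph{not} equivalent to $d(A_n,B_n)\geq R$ for all $n$ (cross-block distances can be small for small indices), but only the implication from $d(A,B)\geq R$ to the block-wise condition is ever used in your forward direction, and in the contrapositive you work inside a single $X_n$, so the characterisation you actually invoke is valid. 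The forward direction is the substantive part and checks out: the nested neighbourhoods $A_n^{(k)}$ satisfy $\abs{A_n^{(k+1)}}\geq\abs{A_n^{(k)}}+c_\lambda\lambda\abs{X_n}$ as long as $\abs{A_n^{(k)}}\leq\tfrac12\abs{X_n}$ (the lower bound $\abs{A_n^{(k)}}\geq\lambda\abs{X_n}$ is automatic by monotonicity), so after $k_0>(\tfrac12-\lambda)/(c_\lambda\lambda)$ steps both thickened sets exceed half of $X_n$ and must meet, giving $d(A_n,B_n)\leq 2k_0R_\lambda$; the contrapositive with $c=\tfrac12$, $B=X_n\setminus(A\cup\partial_RA)$ and the bound $\abs{B}\geq\tfrac14\abs{X_n}$ is also fine. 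Your closing remark is apt: the neighbourhood-growth iteration is exactly the mechanism that reappears, in refined form, in the Structure Theorems (Theorems~\ref{thm:structure.of.as.exp.} and \ref{thm:structure.of.as.exp.general.case}) of this paper.
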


\section{Structure theorem for asymptotic expanders}\label{sec:structure}

In this section, we establish the fundamental structure theorem for asymptotic expanders, showing that they can be ``exhausted'' by a sequence of spaces with expander behaviour. We start with the graph case, which may be more intuitive, and then move on to the general case.

\subsection{Graph case}
As mentioned in the introduction, the key observation that will enable us to prove the structure result is the following simple lemma:

 \begin{lem}\label{lem:complement.of.maximal.F}
  Let $X$ be a finite graph and fix $c>0$. If $F\subset X$ is a maximal $c$\=/\fol set (with respect to inclusion), then for every non\=/empty subset $A\subset X\smallsetminus F$ such that $\abs{A}\leq \frac{1}{2}\abs{X}-\abs{F}$ we have $\abs{\partial A\smallsetminus F}>c\abs{A}$.
 \end{lem}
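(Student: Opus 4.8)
The plan is to argue by contradiction using the maximality of $F$. Suppose there were a non-empty $A \subseteq X \smallsetminus F$ with $|A| \leq \frac{1}{2}|X| - |F|$ but $|\partial A \smallsetminus F| \leq c|A|$. The natural candidate to contradict maximality is the set $F' := F \cup A$, since it strictly contains $F$ (as $A$ is non-empty and disjoint from $F$). I would then check that $F'$ is itself a $c$-\fol set, which would contradict the assumed maximality of $F$.

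So the two things to verify are: first, that $|F'| \leq \frac{1}{2}|X|$; and second, that $|\partial F'| \leq c|F'|$. The size bound is immediate: $|F'| = |F| + |A| \leq |F| + (\frac{1}{2}|X| - |F|) = \frac{1}{2}|X|$. For the boundary estimate, the key observation is that any vertex in $\partial F' = \partial(F \cup A)$ lies outside $F \cup A$ and is adjacent to a vertex of $F \cup A$; hence it is adjacent to a vertex of $F$ (so it lies in $\partial F$) or adjacent to a vertex of $A$ (so it lies in $\partial A$), and in the latter case, since it is not in $F$, it lies in $\partial A \smallsetminus F$. Therefore $\partial F' \subseteq \partial F \cup (\partial A \smallsetminus F)$, giving
\[
|\partial F'| \leq |\partial F| + |\partial A \smallsetminus F| \leq c|F| + c|A| = c|F'|,
\]
where the first inequality in the middle uses that $F$ is $c$-\fol. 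This shows $F'$ is a $c$-\fol set strictly containing $F$, contradicting maximality, and completes the proof.

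I do not anticipate a serious obstacle here — the lemma is genuinely elementary, as the authors advertise. The only point requiring a little care is the set-theoretic inclusion $\partial(F \cup A) \subseteq \partial F \cup (\partial A \smallsetminus F)$: one must remember that $\partial$ denotes the \emph{outer} boundary, so a vertex of $\partial(F\cup A)$ is automatically outside both $F$ and $A$, which is exactly what lets us drop into $\partial A \smallsetminus F$ rather than merely $\partial A$. This is also where the hypothesis $A \cap F = \emptyset$ is implicitly used to make $|F'| = |F| + |A|$ exact. Everything else is bookkeeping with cardinalities.
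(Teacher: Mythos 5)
Your proof is correct and is essentially the paper's argument: both hinge on applying the maximality of $F$ to the set $F\cup A$ (which has size at most $\tfrac12|X|$) together with the inclusion $\partial(F\cup A)\subseteq \partial F\cup(\partial A\smallsetminus F)$, the only difference being that you phrase it as a contradiction while the paper argues directly that $|\partial(F\sqcup A)|>c|F\sqcup A|$ and subtracts. No gap.
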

\begin{proof}
 Let $Y\coloneqq X\smallsetminus F$ and let $A\subset Y$ with $0<\abs{A}\leq \frac{1}{2}\abs{X}-\abs{F}$. Since $F$ is a maximal $c$\=/\fol set and $\abs{A\sqcup F}\leq \frac{1}{2}\abs{X}$, we deduce that $\abs{\partial^X (A\sqcup F)}> c\abs{A\sqcup F}=c(\abs{F}+\abs{A})$. On the other hand, we have:
 \[
  \abs{\partial^X (A\sqcup F)}\leq \abs{\partial^X F}+\abs{\partial^Y A}\leq c\abs{F}+\abs{\partial^YA}.
 \]
 Putting the two inequalities together we obtain that $\abs{\partial^YA} > c\abs{A}$, as desired.
\end{proof}

\begin{cor}\label{cor:complement.of.maximal.F}
  Let $F\subset X$ be as in Lemma~\ref{lem:complement.of.maximal.F} and let $Y\coloneqq X\smallsetminus F$.
  If $\abs{F}\leq\alpha\abs{X}$, then any non\=/empty $c$\=/\fol set $A$ for the graph $Y$ must satisfy $\abs{A}>\frac{1-2\alpha}{2}\abs{X}$.
\end{cor}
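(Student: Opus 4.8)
The plan is to deduce this immediately from Lemma~\ref{lem:complement.of.maximal.F} by contraposition. Suppose, towards a contradiction, that $A\subset Y$ is a non\=/empty $c$\=/\fol set for the graph $Y$ with $\abs{A}\leq\frac{1-2\alpha}{2}\abs{X}$. Since $\abs{F}\leq\alpha\abs{X}$, adding the two inequalities gives $\abs{A}+\abs{F}\leq\frac{1}{2}\abs{X}$, i.e.\ $\abs{A}\leq\frac{1}{2}\abs{X}-\abs{F}$, so the hypotheses of Lemma~\ref{lem:complement.of.maximal.F} are satisfied and we obtain $\abs{\partial A\smallsetminus F}>c\abs{A}$.

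Next I would reconcile the two notions of boundary at play. The \fol condition for $A$ inside $Y$ involves $\partial^Y A$, the $1$\=/boundary of $A$ in the (full) subgraph $Y=X\smallsetminus F$; as recorded in Section~\ref{ssec:basic.notation} (and the remark following it, which is precisely why we restrict attention to $R=1$), this coincides with $\partial^X_1 A\cap Y=\partial A\smallsetminus F$. Hence the conclusion of Lemma~\ref{lem:complement.of.maximal.F} reads $\abs{\partial^Y A}>c\abs{A}$, directly contradicting the assumption that $A$ is $c$\=/\fol in $Y$ (which forces $\abs{\partial^Y A}\leq c\abs{A}$). This contradiction shows $\abs{A}>\frac{1-2\alpha}{2}\abs{X}$, as claimed.

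There is essentially no hard step here: all the real content sits in Lemma~\ref{lem:complement.of.maximal.F}, and the corollary is just a restatement of it, tuned so that the lower bound on $\abs{A}$ is phrased intrinsically in terms of $\abs{X}$ and $\alpha$ rather than $\abs{F}$. The only points requiring a moment's care are the arithmetic check $\abs{A}+\abs{F}\leq\frac{1}{2}\abs{X}$ that licenses the application of the lemma, and the identification $\partial^Y A=\partial A\smallsetminus F$, which relies on $Y$ being a full subgraph of $X$.
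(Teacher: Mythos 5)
Your proof is correct and is essentially the paper's argument: both reduce immediately to Lemma~\ref{lem:complement.of.maximal.F} via the arithmetic observation $\abs{A}+\abs{F}\leq\frac{1}{2}\abs{X}$, concluding that such an $A$ cannot be $c$\=/\fol in $Y$. Your extra care in spelling out the identification $\partial^Y A=\partial A\smallsetminus F$ (valid since $Y$ is a full subgraph and $R=1$) is a point the paper leaves implicit, but it is not a different route.
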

\begin{proof}
 This follows immediately from Lemma~\ref{lem:complement.of.maximal.F} because
 \[
  \frac{1-2\alpha}{2}\abs{X}
  =\frac{1}{2}\abs{X}-\alpha\abs{X}<\frac{1}{2}\abs{X}-\abs{F}
 \]
 and hence a set with $\abs{A}\leq\frac{1-2\alpha}{2}\abs{X}$ cannot be a $c$\=/\fol set for $Y$.
\end{proof}

Corollary~\ref{cor:complement.of.maximal.F} really shines when one is provided with maximal \fol sets of very small size. In fact, when this  is the case, \fol sets in the complement $Y=X\smallsetminus F$ must be very large and are hence easier to control. This is the basic strategy at the heart of the Structure Theorem for asymptotic expander graphs. Before delving into the proof it is convenient to introduce some extra notation.

\begin{de}\label{de:exhaustion}
 Let $\{X_n\}_n$ be a sequence of finite graphs with uniformly bounded degrees and $\abs{X_n}\to\infty$. We say that it admits a \emph{uniform exhaustion by expanders} if there exist sequences $\{\alpha_k\}_{k\in \N}, \{c_k\}_{k\in \N}$ such that $c_k>0$ and $\alpha_k\to 0$, and subsets $Y_{n,k}\subseteq X_n$ such that $\{Y_{n,k}\}_n$ is a $c_k$\=/expander and $\abs{Y_{n,k}}\geq (1-\alpha_k)\abs{X_n}$ for every $n,k\in\NN$.
\end{de}

\begin{rmk}
  A few comments:
  \begin{enumerate}
  \item For fixed $k$, the sequence $\braces{Y_{n,k}}_{n\in\NN}$ is a sequence of expander graphs.
  \item For fixed $k$, the inclusions $Y_{n,k}\hookrightarrow X_n$ need not be uniformly coarse embeddings (but they are ``weak embeddings''. See the discussion in Section~\ref{sec:non.coarse.embeddability}).
  \item For fixed $n$, there is a $k$ large enough so that $\alpha_k<\frac{1}{\abs{X_n}}$. In that case $Y_{n,k}$ must be the whole $X_n$.
  \item Let $Y,Y'\subset X$ be $c$\=/expanders such that $\abs{Y},\abs{Y'}\geq (1-\alpha)\abs{X}$. When $\alpha$ is small it is easy to show that $Y\cup Y'$ is going to be a $c'$\=/expander for some constant $c'$ depending only on $c$ and $\alpha$. Using this observation and considering unions of the $Y_{n,k}$, one can always assume that $Y_{n,k}\subseteq Y_{n,k'}$ for every $k'>k$. This justifies the usage of the term ``exhaustion''.
  \end{enumerate}
\end{rmk}

Let $\{X_n\}_{n\in \N}$ be a sequence of graphs with uniformly bounded degree and $|X_n| \to \infty$.
Since the $X_n$ have uniformly bounded degree, it follows from Lemma~\ref{lem: equiv def for asymptotic expanders(graph)} that $\braces{X_n}_{n\in\NN}$ is a sequence of asymptotic expanders if and only if there is a function $\fnc \colon(0,\frac{1}{2}]\to [0,\infty)$ such that for every $\alpha\in (0,\frac 12]$ the cardinality of any $\fnc(\alpha)$\=/\fol set in $X_n$ is at most $\alpha \abs{X_n}$ (in the notation of Lemma~\ref{lem: equiv def for asymptotic expanders(graph)}, $\fnc$ can be defined as $\fnc(\alpha)\coloneqq c_\alpha$). Since $c$\=/\fol sets are $c'$\=/\fol sets for every $c'>c$, we can assume that the function $\fnc$ is an increasing function.

For every $c>0$, let $\CF_{c,n}$ be the family of $c$\=/\fol sets in $X_n$, \emph{i.e.},
 \[
  \CF_{c,n}\coloneqq\Bigbraces{A\subset X_n\Bigmid \abs{A}\leq\frac{1}{2}\abs{X_n},\ \abs{\partial A}\leq c\abs{A}}.
 \]
By our choice of conventions, $\CF_{c,n}$ is always non\=/empty, as it contains the empty set. Further, since $X_n$ is a finite graph, $\CF_{c,n}$ always admits some (possibly more than one) element that is maximal with respect to inclusion.
Note that, if $\braces{X_n}_{n\in\NN}$ is a sequence of asymptotic expanders, then every $F_n\in\CF_{\fnc(\alpha),n}$ has cardinality at most $\abs{F_n}\leq\alpha\abs{X_n}$. We are now ready to prove the following:

\begin{thm}[Structure Theorem for asymptotic expander graphs]\label{thm:structure.of.as.exp.}
 Let $\braces{X_n}_{n\in\NN}$ be a sequence of finite connected graphs with uniformly bounded degrees and increasing cardinalities. Then it is a sequence of asymptotic expanders if and only if it admits a uniform exhaustion by expanders.
\end{thm}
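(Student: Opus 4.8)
The plan is to establish the two implications separately. The substantial direction is ``asymptotic expander $\Rightarrow$ uniform exhaustion by expanders'', which I would prove using Lemma~\ref{lem:complement.of.maximal.F} and Corollary~\ref{cor:complement.of.maximal.F}; the converse is a direct verification of condition (AsEx'') from Lemma~\ref{lem: equiv def for asymptotic expanders(graph)}. Throughout, $\partial$ denotes the $1$-boundary and $\partial^X,\partial^Y$ the boundary taken inside $X$, resp.\ inside a full subgraph $Y$.

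For the forward direction, assume $\{X_n\}$ is a sequence of asymptotic expanders and fix the increasing function $\fnc\colon(0,\tfrac12]\to[0,\infty)$ from the paragraph preceding the theorem, so that every $\fnc(\alpha)$-\fol set of $X_n$ has at most $\alpha\abs{X_n}$ elements. I would pick $\alpha_0\in(0,\tfrac14]$ small enough that $\tfrac{2\alpha_0}{1-2\alpha_0}<\fnc(\tfrac14)$, choose any sequence $\alpha_k\to0$ with $0<\alpha_k\leq\alpha_0$, let $F_{n,k}\subseteq X_n$ be a \emph{maximal} $\fnc(\alpha_k)$-\fol set, and set $Y_{n,k}\coloneqq X_n\smallsetminus F_{n,k}$; then $\abs{F_{n,k}}\leq\alpha_k\abs{X_n}$, so $\abs{Y_{n,k}}\geq(1-\alpha_k)\abs{X_n}$. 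With $\alpha_k'\coloneqq\tfrac{1-2\alpha_k}{2}$ I would set $c_k\coloneqq\min\bigl\{\fnc(\alpha_k),\ \fnc(\alpha_k')-\tfrac{2\alpha_k}{1-2\alpha_k}\bigr\}$, which is positive because $\alpha_k\leq\tfrac14$ gives $\alpha_k'\geq\tfrac14$ and hence $\fnc(\alpha_k')\geq\fnc(\tfrac14)>\tfrac{2\alpha_k}{1-2\alpha_k}$. The crux is then the claim that each $Y_{n,k}$ is a $c_k$-expander (so that, together with $\alpha_k\to0$, the $Y_{n,k}$ form a uniform exhaustion by expanders). If it failed, $Y_{n,k}$ would have a non-empty $c_k$-\fol set $A$; since $c_k\leq\fnc(\alpha_k)$, $A$ is also a $\fnc(\alpha_k)$-\fol set of $Y_{n,k}$, so Corollary~\ref{cor:complement.of.maximal.F} forces $\abs{A}>\alpha_k'\abs{X_n}$. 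Viewing $A$ inside $X_n$ we get $\alpha_k'\abs{X_n}<\abs{A}\leq\tfrac12\abs{Y_{n,k}}\leq\tfrac12\abs{X_n}$, so (AsEx'') at the scale $\alpha_k'$ gives $\abs{\partial^{X_n}A}>\fnc(\alpha_k')\abs{A}$; but $\partial^{X_n}A=\partial^{Y_{n,k}}A\sqcup(\partial^{X_n}A\cap F_{n,k})$, so
\[
  \abs{\partial^{X_n}A}\leq\abs{\partial^{Y_{n,k}}A}+\abs{F_{n,k}}\leq c_k\abs{A}+\alpha_k\abs{X_n}<\Bigl(c_k+\tfrac{2\alpha_k}{1-2\alpha_k}\Bigr)\abs{A}
\]
(using $\abs{X_n}<\tfrac{2}{1-2\alpha_k}\abs{A}$), and comparing the two bounds contradicts the choice of $c_k$. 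Thus $Y_{n,k}$ has no non-empty $c_k$-\fol set; in particular it is connected (a disconnected graph has a non-empty $0$-\fol set), hence a genuine $c_k$-expander.

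For the converse, I would start from the exhaustion data $\alpha_k\to0$, $c_k>0$, $Y_{n,k}\subseteq X_n$, let $D$ bound all degrees, and verify (AsEx''). Given $\alpha\in(0,\tfrac12]$, choose $k$ with $\alpha_k<\tfrac\alpha2$, put $Y\coloneqq Y_{n,k}$ (so $\abs{X_n\smallsetminus Y}<\tfrac\alpha2\abs{X_n}$), take $A\subseteq X_n$ with $\alpha\abs{X_n}\leq\abs{A}\leq\tfrac12\abs{X_n}$, and set $A'\coloneqq A\cap Y$, so that $\abs{A'}>\abs{A}-\tfrac\alpha2\abs{X_n}\geq\tfrac12\abs{A}>0$. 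If $\abs{A'}\leq\tfrac12\abs{Y}$, then $\abs{\partial^YA'}>c_k\abs{A'}>\tfrac{c_k}2\abs{A}$, and since every vertex of $\partial^YA'$ lies in $Y\smallsetminus A$ and is adjacent to $A'\subseteq A$ we have $\partial^YA'\subseteq\partial^{X_n}A$, giving $\abs{\partial^{X_n}A}>\tfrac{c_k}2\abs{A}$. If instead $\abs{A'}>\tfrac12\abs{Y}$, put $B\coloneqq Y\smallsetminus A$; then $0<\abs{B}\leq\tfrac12\abs{Y}$ and $\abs{B}\geq\abs{Y}-\abs{A}\geq(\tfrac12-\alpha_k)\abs{X_n}$, so $\abs{\partial^YB}>c_k\abs{B}\geq c_k(\tfrac12-\alpha_k)\abs{A}$, and since each vertex of $\partial^YB$ lies in $A$ and is adjacent to some vertex of $B\subseteq X_n\smallsetminus A$, with this assignment at most $D$-to-one, we get $\abs{\partial^{X_n}A}\geq\tfrac1D\abs{\partial^YB}>\tfrac{c_k}D(\tfrac12-\alpha_k)\abs{A}$. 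Either way $\abs{\partial^{X_n}A}>c_\alpha\abs{A}$ with $c_\alpha\coloneqq\min\{\tfrac{c_k}2,\ \tfrac{c_k}D(\tfrac12-\alpha_k),\ \tfrac12\}$, which is (AsEx''); Lemma~\ref{lem: equiv def for asymptotic expanders(graph)} then finishes the job.

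The main obstacle is the forward direction: one must upgrade a hypothesis that tolerates linearly small exceptional sets into a genuine linear isoperimetric inequality. The mechanism is to delete a \emph{maximal} small \fol set, so that Lemma~\ref{lem:complement.of.maximal.F} pins the surviving \fol sets of $Y_{n,k}$ to macroscopic size, and to then reinvoke (AsEx'') on them; the delicate point is that (AsEx'') gets applied at the near-$\tfrac12$ scale $\alpha_k'$ whereas only a scale-$\alpha_k$ set was removed, and reconciling these two scales is exactly what the smallness condition $\tfrac{2\alpha_0}{1-2\alpha_0}<\fnc(\tfrac14)$ together with the choice $\alpha_k\to0$ accomplishes.
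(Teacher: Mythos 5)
Your proof is correct and follows essentially the same route as the paper's: both directions hinge on removing a maximal small \fol set and invoking Lemma~\ref{lem:complement.of.maximal.F}/Corollary~\ref{cor:complement.of.maximal.F} to force any surviving \fol set of $Y_{n,k}$ to be of near-half size, then applying (AsEx'') at that scale, and the converse is the same two-case verification of (AsEx'') using the degree bound. The only deviations are cosmetic: you argue by contradiction with the constant $c_k=\min\{\fnc(\alpha_k),\fnc(\alpha_k')-\tfrac{2\alpha_k}{1-2\alpha_k}\}$ (bounding the extra boundary simply by $\abs{F_{n,k}}$), which lets you avoid the paper's separate treatment of the case where $\fnc$ is bounded away from zero.
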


\begin{proof}
 Assume that $\braces{X_n}_{n\in\NN}$ is a sequence of asymptotic expanders with function $\fnc\colon(0,\frac{1}{2}]\to (0,\infty)$.
 If $\fnc$ is bounded away from $0$ then $\braces{X_n}_{n\in\NN}$ is a sequence of expanders and there is nothing to prove. We thus assume that $\fnc(\alpha)\to 0$ as $\alpha\to 0$. Fix $\alpha$ (small) and let $F_{n,\alpha}$ be a maximal $\fnc(\alpha)$\=/\fol set (\emph{i.e.} a maximal element in $\CF_{\fnc(\alpha),n}$).
 By definition, $\abs{F_{n,\alpha}}\leq \alpha\abs{X_n}$ and hence $Y_{n,\alpha}\coloneqq X_n\smallsetminus F_{n,\alpha}$ has cardinality at least $(1-\alpha)\abs{X_n}$. By Corollary~\ref{cor:complement.of.maximal.F} every non\=/empty subset $A\subset Y_{n,\alpha}$ with $|A| \leq(\frac{1}{2}-\alpha)|X_n|$ is not a $\fnc(\alpha)$\=/\fol set.

 On the other hand, if $A\subset Y_{n,\alpha}$ is such that $(\frac{1}{2}-\alpha)\abs{X_n}<\abs{A}\leq\frac{1}{2}\abs{Y_{n,\alpha}}$ then we have:
 \[
  \abs{\partial^{Y_{n,\alpha}}A}\geq\abs{\partial^{X_n}A}-D\abs{\partial^{X_n} F_{n,\alpha}}
  \geq \fnc\Bigparen{\frac{1}{2}-\alpha}\abs{A}-D\fnc(\alpha)\abs{F_{n,\alpha}}
\]
where $D$ is the bound on the degrees. Whence
 \begin{equation}\label{eq:big.bdary.structure.thm}
  \abs{\partial^{Y_{n,\alpha}}A}
  \geq \Bigbrack{\fnc\Bigparen{\frac{1}{2}-\alpha}-D\fnc(\alpha)\frac{2\alpha}{1-2\alpha}}\abs{A}.
 \end{equation}

 Since $\fnc$ is a strictly positive non\=/decreasing function, the function $\fnc\Bigparen{\frac{1}{2}-\alpha}-D\fnc(\alpha)\frac{2\alpha}{1-2\alpha}$ increases as $\alpha$ gets smaller and its limit as $\alpha$ goes to $0$ is strictly positive. On the  other hand, we are assuming that $\fnc(\alpha)$ converges to $0$ as $\alpha\to 0$ and hence there exists an $\alpha_0$ small enough such that for every $\alpha<\alpha_0$ we have
 \[
  \fnc\Bigparen{\frac{1}{2}-\alpha}-D\fnc(\alpha)\frac{2\alpha}{1-2\alpha}
  >\fnc(\alpha_0)\geq \fnc(\alpha).
 \]
Hence, we deduce from \eqref{eq:big.bdary.structure.thm} that for any given $\alpha<\alpha_0$ the graphs $Y_{n,\alpha}$\=/are $\fnc(\alpha)$\=/expanders.

Choosing a decreasing sequence $\{\alpha_k\}_{k \in \N}$ tending to 0 with $\alpha_k<\alpha_0$ and letting $Y_{n,k}\coloneqq Y_{n,\alpha_k}$ yields a uniform exhaustion by expanders.

 \

 For the converse implication, assume that $(Y_{n,k})_{n,k\in\NN}$ is a uniform exhaustion by expanders for $\braces{X_n}_{n\in\NN}$. Then for every $\alpha$ there is a $k$ large enough so that $\abs{Y_{n,k}}>(1-\frac{\alpha}{2})\abs{X_n}$ for every $n$.

 Since $Y_{n,k}$ is a $c_k$\=/expander, for every subset $A\subset X_n$ such that $\alpha\abs{X_n} \leq \abs{A} \leq \frac{1}{2}\abs{X_n}$ and $\abs{A\cap Y_{n,k}}\leq\frac{1}{2}\abs{Y_{n,k}}$ we have
 \[
  \abs{\partial^{X_n}A}\geq \abs{\partial^{Y_{n,k}}(A\cap Y_{n,k})}\geq c_k\abs{A\cap Y_{n,k}}\geq \frac{c_k}{2}\abs{A},
 \]
 where the last inequality follows from the fact that $|Y_{n,k}| > |X_n| - \frac{1}{2}|A|$ and hence $|A \setminus Y_{n,k}| \leq |A \cap Y_{n,k}|$.
 It only remains to deal with the case of $A\subset X_n$ such that $\alpha\abs{X_n} \leq \abs{A} \leq \frac{1}{2}\abs{X_n}$ and $\frac{1}{2}\abs{Y_{n,k}}<\abs{A\cap Y_{n,k}}$. In this case we look at the complement in $Y$ and we have
 \[
  \abs{\partial^{X_n}A}\geq \abs{\partial^{Y_{n,k}}(A\cap Y_{n,k})}
  \geq \frac{1}{D}\abs{\partial^{Y_{n,k}}(Y_{n,k}\smallsetminus A)}
  > \frac{c_k}{D}\abs{Y_{n,k}\smallsetminus A}
  \geq \frac{c_k}{D}\Bigparen{1-\frac{\alpha}{2}-\frac{1}{2}}\abs{X_n}
  \]
 and hence $\abs{\partial^{X_n}A}\geq \frac{c_k}{D}(1-\alpha)\abs{A}$. Letting $c_\alpha\coloneqq\min\bigbraces{\frac{c_k}{2},\frac{c_k}{D}(1-\alpha)}$ satisfies the requirements of the definition of asymptotic expander graphs.
\end{proof}

\begin{rmk}
Theorem~\ref{thm:structure.of.as.exp.} holds with the same proof when $\braces{X_n}_{n\in\NN}$ is a sequence of finite metric spaces that are \emph{uniformly quasi\=/geodesic} (\emph{i.e.}, so that any two points can be joined by an $(L,A)$\=/quasi\=/geodesic for some fixed $L$ and $A$) and have uniformly bounded geometry.
\end{rmk}

\subsection{General case}

The proof of Theorem~\ref{thm:structure.of.as.exp.} made substantial use of the characterisation of asymptotic expanders in terms of condition (AsEx''). In the general setting of families of metric spaces, condition (AsEx'') ceases to be equivalent to the definition of asymptotic expanders and we are thus forced to revert to using condition (AsEx') instead. That is, we use that  a sequence of finite metric spaces $\{X_n\}_{n \in \N}$ with $|X_n| \to \infty$ is a sequence of asymptotic expanders if and only if there exists some $c>0$, such that for any $\alpha\in (0,\frac{1}{2}]$, there exists $R_\alpha>0$ such that for any $n\in \N$ and $A \subseteq X_n$ with $\alpha|X_n| \leq |A| \leq \frac{1}{2}|X_n|$, we have $|\partial_{R_\alpha} A| > c|A|$.

The essential difference between conditions (AsEx') and (AsEx'') is the order of the quantifiers: in (AsEx'') there is a fixed radius $R=1$ so that for every $\alpha$ there is a $c_\alpha$ such that $\abs{\partial A}> c_\alpha\abs{A}$; while in (AsEx') there is a fixed $c>0$ so that for every $\alpha$ there is a radius $R_\alpha$ such that $\abs{\partial_{R_\alpha} A}> c\abs{A}$ (with the necessary restrictions to the subsets $A$ under consideration). This difference is reflected in the statement of the Strucure Theorem: the analogue of Theorem~\ref{thm:structure.of.as.exp.} in the general metric case becomes the following:

\begin{thm}[Structure Theorem for general asymptotic expanders]\label{thm:structure.of.as.exp.general.case}
Let $\{X_n\}_{n \in \N}$ be a sequence of finite metric spaces with $|X_n| \to \infty$. Then the following are equivalent:
\begin{enumerate}
  \item $\{X_n\}_{n \in \N}$ is a sequence of asymptotic expanders;
  \item there exist $c>0$, a positive decreasing sequence $\{\alpha_k\}_{k \in \N}$ with $\alpha_k \to 0$, a positive sequence $\{R_k\}_{k\in \N}$, and for each $n \in \N$ a sequence of subsets $\{Y_{n,k}\}_{k\in \N}$ in $X_n$, satisfying: for each $n,k\in \N$,
  \begin{itemize}
    \item $|Y_{n,k}| \geq (1-\alpha_k)|X_n|$;
    \item for each non-empty $A \subseteq Y_{n,k}$ with $|A| \leq \frac{1}{2}|Y_{n,k}|$, we have $|\partial_{R_k}^{Y_{n,k}} A| > c|A|$.
  \end{itemize}
\end{enumerate}
\end{thm}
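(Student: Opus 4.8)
The plan is to mimic the argument of Theorem~\ref{thm:structure.of.as.exp.}, but using condition (AsEx') in place of (AsEx''), so that the propagation radius—rather than the expansion constant—becomes the quantity that varies with the scale $\alpha_k$. The constant $c$ will be the (global) constant furnished by (AsEx'), and it is crucial that this single $c$ works for all scales: this is exactly what lets us avoid the degree bound $D$ that appeared in the graph case.

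\textbf{The implication $(2)\Rightarrow(1)$.} This direction should be the easier one and can be done by hand. Given $\alpha\in(0,\tfrac12]$, pick $k$ large enough that $\alpha_k<\tfrac\alpha2$, so $|Y_{n,k}|>(1-\tfrac\alpha2)|X_n|$ for all $n$. For $A\subseteq X_n$ with $\alpha|X_n|\le|A|\le\tfrac12|X_n|$, split into two cases according to whether $|A\cap Y_{n,k}|\le\tfrac12|Y_{n,k}|$ or not. In the first case, $\partial_{R_k}^{Y_{n,k}}(A\cap Y_{n,k})\subseteq\partial_{R_k}^{X_n}A$ gives $|\partial_{R_k}^{X_n}A|>c|A\cap Y_{n,k}|$, and one checks $|A\cap Y_{n,k}|\ge|A|-\alpha_k|X_n|\ge\tfrac12|A|$ for $k$ large (using $|A|\ge\alpha|X_n|$ and $\alpha_k<\tfrac\alpha2$), so $|\partial_{R_k}^{X_n}A|>\tfrac c2|A|$. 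In the second case one argues with the complement $Y_{n,k}\setminus A$ inside $Y_{n,k}$, which is non-empty when $|Y_{n,k}|>2|A|$, i.e. when $1-\tfrac\alpha2>1$... so one instead needs $|A|<\tfrac12|Y_{n,k}|$, which fails only on a narrow window; in that window one can absorb the estimate using that $|A|\le\tfrac12|X_n|$ while $|Y_{n,k}\setminus A|\ge(1-\tfrac\alpha2-\tfrac12)|X_n|$ is a definite fraction of $|X_n|$, hence of $|A|$. Taking $R_\alpha\coloneqq R_k$ and a suitable constant (a fixed fraction of $c$ depending only on $\alpha$) verifies (AsEx'). Unlike in the graph case, no degree bound is needed because passing from a subgraph boundary to the ambient boundary is a containment, not an inequality with a multiplicative loss—this is the technical payoff of having a fixed $c$.

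\textbf{The implication $(1)\Rightarrow(2)$.} Let $c>0$ be the constant from (AsEx'), and for each $\alpha$ let $R_\alpha$ be the associated radius; we may take $R_\alpha$ non-decreasing as $\alpha\to0$. The idea is: for each scale $\alpha$, choose $R_{n,\alpha}$ to be a maximal set (with respect to inclusion) among subsets $F\subseteq X_n$ with $|F|\le\tfrac12|X_n|$ and $|\partial_{R_\alpha}F|\le c|F|$—call such sets $(c,R_\alpha)$-Følner. By (AsEx') every such $F$ has $|F|<\alpha|X_n|$ (otherwise it would violate the expansion inequality at scale $\alpha$), so $Y_{n,\alpha}\coloneqq X_n\setminus F_{n,\alpha}$ has $|Y_{n,\alpha}|\ge(1-\alpha)|X_n|$. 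The analogue of Lemma~\ref{lem:complement.of.maximal.F} for general $R$ shows that any non-empty $A\subseteq Y_{n,\alpha}$ with $|A|\le\tfrac12|X_n|-|F_{n,\alpha}|$ has $|\partial_{R_\alpha}^{X_n}A\setminus F_{n,\alpha}|>c|A|$ (the proof is identical: $\partial_{R_\alpha}^{X_n}(A\sqcup F)\subseteq\partial_{R_\alpha}^{X_n}F\sqcup(\partial_{R_\alpha}^{X_n}A\setminus F)$, and maximality of $F$ forces the left side to have size $>c(|F|+|A|)$). But now there is a genuine difficulty that did not arise for graphs: the set $\partial_{R_\alpha}^{X_n}A\setminus F_{n,\alpha}$ is \emph{not} the same as $\partial_{R_\alpha}^{Y_{n,\alpha}}A$, because a point of $Y_{n,\alpha}$ may lie within distance $R_\alpha$ of $A$ only via a path passing through $F_{n,\alpha}$. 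In the graph case with $R=1$ this problem was invisible (boundary points are adjacent, so they are automatically in $Y$), but for larger radii we must be more careful. The fix is to enlarge the radius: since $\partial_{R_\alpha}^{X_n}A\setminus F_{n,\alpha}\subseteq\partial_{R_\alpha}^{Y_{n,\alpha}}A$ \emph{is} in fact true—a point $y\in X_n\setminus F$ with $d_{X_n}(y,A)\le R_\alpha$ satisfies $d_{Y_{n,\alpha}}(y,A)\ge d_{X_n}(y,A)$ trivially, but we need the \emph{reverse}, and the induced metric on $Y_{n,\alpha}$ as a subset of $X_n$ (which is what Theorem~\ref{thm:structure.of.as.exp.general.case} uses, via $\partial_{R_k}^{Y_{n,k}}$) satisfies $d_{Y_{n,\alpha}}(y,a)=d_{X_n}(y,a)$ for $y,a\in Y_{n,\alpha}$ since $Y_{n,\alpha}$ carries the \emph{restricted} metric, not a path metric—so actually $\partial_{R_\alpha}^{Y_{n,\alpha}}A=\partial_{R_\alpha}^{X_n}A\cap Y_{n,\alpha}=\partial_{R_\alpha}^{X_n}A\setminus F_{n,\alpha}$, exactly as recorded in Subsection~\ref{ssec:basic.notation}. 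Hence the inequality $|\partial_{R_\alpha}^{Y_{n,\alpha}}A|>c|A|$ holds immediately for all $A\subseteq Y_{n,\alpha}$ with $|A|\le\tfrac12|X_n|-|F_{n,\alpha}|$. The remaining sets $A$ with $\tfrac12|X_n|-|F_{n,\alpha}|<|A|\le\tfrac12|Y_{n,\alpha}|$ lie in a window of width $<|F_{n,\alpha}|<\alpha|X_n|$; for these, apply (AsEx') directly at the scale $\beta_\alpha\coloneqq(\tfrac12-\alpha)$ (so $|A|>\beta_\alpha|X_n|$) to get $|\partial_{R_{\beta_\alpha}}^{X_n}A|>c|A|$, and then intersect with $Y_{n,\alpha}$, losing at most $|\partial_{R_{\beta_\alpha}}^{X_n}A\cap F_{n,\alpha}|$; bounding that loss requires comparing it to $|A|$, and since $|F_{n,\alpha}|<\alpha|X_n|\le\tfrac\alpha{\,1/2-\alpha\,}|A|$ we can absorb it provided $\alpha$ is small relative to $c$. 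Finally set $R_k\coloneqq\max\{R_{\alpha_k},R_{\beta_{\alpha_k}}\}$ and pass to a decreasing sequence $\alpha_k\to0$ small enough that these absorptions go through with a uniform residual constant (still depending only on $c$, hence a single $c'$ works for all $k$).

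\textbf{Main obstacle.} The subtle point, and the place I would spend the most care, is precisely the bookkeeping in the ``window'' $\tfrac12|X_n|-|F_{n,\alpha}|<|A|\le\tfrac12|Y_{n,\alpha}|$ of the forward direction: there the complement-in-$Y$ trick used in Theorem~\ref{thm:structure.of.as.exp.} is unavailable (we are not working with a fixed radius and there is no degree bound to convert outer boundaries back and forth), so one must invoke (AsEx') afresh at a slightly smaller scale $\beta=\tfrac12-\alpha$ and control the loss incurred by intersecting the ambient boundary with $Y_{n,\alpha}$. Making sure the final constant $c$ (and the relation $R_k=\max\{R_{\alpha_k},R_{\tfrac12-\alpha_k}\}$) is uniform in $k$—rather than degenerating as $\alpha_k\to0$—is the crux, and it works only because the $c$ in (AsEx') is global, independent of the scale.
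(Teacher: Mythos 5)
Your forward direction $(1)\Rightarrow(2)$ is essentially correct and close in spirit to the paper's: same maximal $(c,R_\alpha)$\=/Følner set $F_{n,\alpha}$, same maximality argument giving $|\partial^{Y_{n,\alpha}}_{R_\alpha}A|>c|A|$ for $|A|\le\tfrac12|X_n|-|F_{n,\alpha}|$ (and your worry about induced versus path metric is correctly resolved, since $\partial^Y_RA=\partial^X_RA\cap Y$ by definition here). Where you diverge is the window $\tfrac12|X_n|-|F_{n,\alpha}|<|A|\le\tfrac12|Y_{n,\alpha}|$: you invoke (AsEx') a second time at the scale $\tfrac12-\alpha$ and take $R_k=\max\{R_{\alpha_k},R_{\frac12-\alpha_k}\}$, which works (boundaries are monotone in the radius and the loss $|F_{n,\alpha}|<\alpha|X_n|\le\frac{2\alpha}{1-2\alpha}|A|$ is absorbable for $\alpha$ small relative to $c$). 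The paper instead stays at the single radius $R_\alpha$ by passing to a subset $A'\subseteq A$ with $|A'|=\tfrac12|X_n|-|F_{n,\alpha}|$ and using $|\partial^{Y_{n,\alpha}}_{R_\alpha}A|\ge|\partial^{Y_{n,\alpha}}_{R_\alpha}A'|-|A\setminus A'|$; both routes give a uniform constant, yours at the cost of a second radius.

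The genuine gap is in $(2)\Rightarrow(1)$, in the case $|A\cap Y_{n,k}|>\tfrac12|Y_{n,k}|$. There you propose the complement trick from Theorem~\ref{thm:structure.of.as.exp.}: apply the hypothesis to $Y_{n,k}\setminus A$. But $\partial^{Y_{n,k}}_{R_k}(Y_{n,k}\setminus A)$ is a subset of $A\cap Y_{n,k}$, i.e.\ it consists of points \emph{inside} $A$, whereas you need a lower bound on $|\partial^{X_n}_{R_k}A|$, which consists of points \emph{outside} $A$. In the graph case this conversion was exactly where the degree bound $D$ entered ($|\partial^{Y}(A\cap Y)|\ge\frac1D|\partial^{Y}(Y\setminus A)|$); Theorem~\ref{thm:structure.of.as.exp.general.case} assumes no bounded geometry at all, and the conversion is simply false for general finite metric spaces (think of a star-like space where a single point lies within distance $R_k$ of a huge set: the complement's boundary is huge while the set's boundary is a single point). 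Your sentence about ``absorbing the estimate'' using $|Y_{n,k}\setminus A|\ge(\tfrac12-\tfrac\alpha2)|X_n|$ does not supply this missing conversion, and indeed the passage is internally garbled (``i.e.\ when $1-\tfrac\alpha2>1$''), which suggests you sensed the argument was not closing. The repair is the same subset trick you could have borrowed from your own forward direction (and which the paper uses): since $|A\cap Y_{n,k}|\le\tfrac12|X_n|$, pick $A'\subseteq A\cap Y_{n,k}$ with $|A'|=\tfrac12|Y_{n,k}|$, apply the hypothesis to $A'$, and estimate
\[
|\partial^{X_n}_{R_k}A|\;\ge\;|\partial^{Y_{n,k}}_{R_k}(A\cap Y_{n,k})|\;\ge\;|\partial^{Y_{n,k}}_{R_k}A'|-|(A\cap Y_{n,k})\setminus A'|\;>\;c|A'|-\bigl(\tfrac12|X_n|-|A'|\bigr),
\]
where the loss is at most $\tfrac12|X_n|-\tfrac12|Y_{n,k}|\le\tfrac{\alpha_k}{2}|X_n|$ and is absorbed once $\alpha_k$ is small relative to $c$. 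With that case repaired, your proof goes through.
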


\begin{proof}

\emph{``(1) $\Rightarrow$ (2)''}: A large portion of the proof is very similar to the proof of Theorem~\ref{thm:structure.of.as.exp.}:
we consider for every $c,R>0$ and $n \in \N$ the family $\CF_{c,R,n}$ of subsets in $X_n$ given by
 \[
  \CF_{c,R,n}\coloneqq\Bigbraces{A\subset X_n\Bigmid \abs{A}\leq\frac{1}{2}\abs{X_n},\ \abs{\partial_R A}\leq c\abs{A}}.
 \]
Again, $\CF_{c,R,n}$ is always non\=/empty, as it contains the empty set and it always admits some (possibly more than one) element that is maximal with respect to inclusion.

We can now fix a constant $c>0$ as in condition (AsEx') and for each $\alpha\in (0,\frac{1}{2}]$ and $n \in \N$, we consider the set $\CF_{c,R_\alpha,n}$ defined above. Take $F_{n,\alpha}$ to be a maximal element in $\CF_{c,R_\alpha,n}$, and set $Y_{n,\alpha} \coloneqq X_n \setminus F_{n,\alpha}$. It follows by the assumption that $|F_{n,\alpha}| \leq \alpha|X_n|$ for each $n \in \N$.

For any non-empty $A \subseteq Y_{n,\alpha}$ with $|A| \leq \frac{1}{2}|X_n| - |F_{n,\alpha}|$, clearly $|A \sqcup F_{n,\alpha}| \leq \frac{1}{2}|X_n|$. We also have:
\[
|\partial^{X_n}_{R_\alpha} (A \sqcup F_{n,\alpha})| \leq |\partial^{X_n}_{R_\alpha} F_{n,\alpha}| + |\partial^{Y_{n,\alpha}}_{R_\alpha} A| \leq c|F_{n,\alpha}| + |\partial^{Y_{n,\alpha}}_{R_\alpha} A|.
\]
On the other hand, by the maximality of $F_{n,\alpha}$ we have
\[
|\partial^{X_n}_{R_\alpha} (A \sqcup F_{n,\alpha})| > c|A \sqcup F_{n,\alpha}| = c|A| + c|F_{n,\alpha}|.
\]
Combining them together, we have $|\partial^{Y_{n,\alpha}}_{R_\alpha} A| > c|A|$.

For any non-empty $A \subseteq Y_{n,\alpha}$ with $\frac{1}{2}|X_n| - |F_{n,\alpha}| < |A| \leq  \frac 12 |Y_{n,\alpha}|$, we take a subset $A' \subseteq A$ with $|A'|=\frac{1}{2}|X_n| - |F_{n,\alpha}|$.
Then from the above analysis, we have
\[
|\partial^{Y_{n,\alpha}}_{R_\alpha} A| \geq |\partial^{Y_{n,\alpha}}_{R_\alpha} A'| - |A \setminus A'| > c|A'| - \frac{|F_{n,\alpha}|}{2} = \frac{c}{2}|X_n| - (c+\frac{1}{2})|F_{n,\alpha}| \geq [c - (2c+1) \alpha] |A|.
\]

Hence for $\alpha\leq \frac{c}{2(2c+1)}$, we have $|\partial^{Y_{n,\alpha}}_{R_\alpha} A| > \frac{c}{2}|A|$.\footnote{This is the main point where this proof diverges from the proof of Theorem~\ref{thm:structure.of.as.exp.}. In fact, when choosing $\alpha\leq \frac{c}{2(2c+1)}$ we are making use of the fact that the constant $c$ is fixed. On the other hand, in Theorem~\ref{thm:structure.of.as.exp.} $c$ depends on $\alpha$.
}

Combining the above two paragraphs together, we showed: if $\alpha\leq \frac{c}{2(2c+1)}$, then for any non-empty $A \subseteq Y_{n,\alpha}$ with $|A| \leq \frac{1}{2}|Y_{n,\alpha}|$, we have $|\partial^{Y_{n,\alpha}}_{R_\alpha} A| > \frac{c}{2}|A|$. Therefore, we take a positive decreasing sequence $\{\alpha_k\}_{k \in \N}$ with $\sup_{k\in \NN}\alpha_k \leq \frac{c}{2(2c+1)}$ and $\alpha_k \to 0$, and take $R_k\coloneqq R_{\alpha_k}$. For each $k,n \in \N$, we take $Y_{n,k}\coloneqq Y_{n,\alpha_k}$. Then clearly condition (2) holds for $\frac{c}{2}$.

~\

\emph{``(2) $\Rightarrow$ (1)''}: We assume condition (2) holds with the constants therein. Given $\alpha>0$, we take $\alpha_k< \min\{\frac{\alpha}{2}, \frac{c}{2(1+c)}\}$. Given any $n \in \N$ and a non-empty subset $A \subseteq  X_n$ with $\alpha |X_n| \leq |A| \leq \frac{1}{2}|X_n|$, we consider the following two cases:

If $|A \cap Y_{n,k}| \leq \frac{1}{2}|Y_{n,k}|$: by assumption, we have $|\partial^{X_n}_{R_k} A| \geq |\partial^{Y_{n,k}}_{R_k} (A \cap Y_{n,k})| > c|A \cap Y_{n,k}|$. Furthermore, we have
\[
|A\cap Y_{n,k}| \geq |A| - |X_n \setminus Y_{n,k}| \geq |A| - \alpha_k|X_n| \geq |A| - \frac{\alpha}{2}|X_n| \geq \frac{1}{2}|A|.
\]
Hence combining them together, we obtain: $|\partial^{X_n}_{R_k} A| > \frac{c}{2}|A|$.

If $|A \cap Y_{n,k}| > \frac{1}{2}|Y_{n,k}|$: we take a subset $A' \subseteq A \cap Y_{n,k}$ with $|A'|=\frac{1}{2}|Y_{n,k}|$. Then combining the above analysis, we have:
\begin{align*}
|\partial^{X_n}_{R_k} A| \geq & |\partial^{Y_{n,k}}_{R_k} (A \cap Y_{n,k})| \geq |\partial^{Y_{n,k}}_{R_k} A'| - |(A \cap Y_{n,k}) \setminus A'|
 >  c|A'| - \big( \frac{1}{2}|X_n| - |A'| \big) \\
 = & \frac{1+c}{2}|Y_{n,k}| - \frac{1}{2}|X_n|
 \geq  \big[ \frac{1+c}{2}\cdot (1-\alpha_k) -\frac{1}{2} \big] |X_n| \geq [c-(1+c)\alpha_k]|A|
 \geq  \frac{c}{2}|A|.
\end{align*}

Combining the above two cases together, we have shown condition (1) holds.
\end{proof}

\section{Lack of coarse embeddings}\label{sec:non.coarse.embeddability}

This section is concerned with coarse embeddings into Banach spaces. Since we are only interested in metric properties, it is natural to focus on real Banach spaces. In particular, we will treat complex Banach spaces as if they were real by ignoring the extra complex structure (\emph{i.e.} $\CCC$ is isometric to $\RR^2$). Before stating the main result we need to introduce some terminology.

Given a (possibly infinite) measure space $(\Omega,\mu)$ and a Banach space $E$, we denote by $L^p(\Omega;E)$ the set of measurable functions $f\colon\Omega\to E$ such that $\norm{f(\omega)}^p$ is integrable. Taking the $p$-th root of this integral defines a norm that makes $L^p(\Omega;E)$ into a Banach space. This is the space of \emph{$p$\=/Bochner integrable functions}. We denote by $\ell^p_\RR$, $\ell^p_\CCC$, $L^p_\RR$ and $L^p_\CCC$ the spaces $L^p(\NN,\RR)$, $L^p(\NN,\CCC)$, $L^p([0,1],\RR)$ and $L^p([0,1],\CCC)$ respectively.

We will make use of the notion of \emph{uniformly curved Banach spaces} as a black box, and just say that they were defined by Pisier in \cite{pisier_complex_2010}. In a sense, this is an optimal class of (complex) Banach spaces for which one can immediately produce good bounds on operator norms using complex interpolation.

Finally, two Banach spaces $E_1$ and $E_2$ are \emph{sphere equivalent} if their unit spheres $S(E_1)$ and $S(E_2)$ are homeomorphic via a homeomorphism $\phi$ such that both $\phi$ and $\phi^{-1}$ are \emph{uniformly continuous}.

Let $\CC_0$ be the smallest class of Banach spaces such that:
\begin{itemize}
 \item $\RR$, $\CCC$, $\ell^p_\RR$, $\ell^p_\CCC$, $L^p_\RR$ and $L^p_\CCC$ belong to $\CC_0$ for every $p\in[1,\infty)$,
 \item uniformly curved Banach spaces are in $\CC_0$,
 \item if $E\in\CC_0$ then $L^p(\Omega;E)\in\CC_0$ for every $(\Omega,\mu)$ and $p\in[1,\infty)$,
 \item if $E_1\in\CC_0$ and $E_2$ is sphere equivalent to $E_1$, then $E_2\in\CC_0$.
\end{itemize}

\begin{rmk}
 The above definition is highly redundant. For example, the spaces $\ell^p_\RR$, $\ell^p_\CCC$, $L^p_\RR$ and $L^p_\CCC$ with $p\in[1,\infty)$ are all sphere\=/equivalent to the separable infinite-dimensional Hilbert space via the Mazur map (\cite[Chapter 10]{benyamini_geometric_1998} and \cite{cheng2016sphere}).
\end{rmk}

The aim of this section is to prove the following theorem, which both answers \cite[Question~7.3]{intro} and generalizes \cite[Proposition~7.4]{intro}.

\begin{thm}\label{thm: non CE general case}
Let $\{X_n\}_{n\in \N}$ be a sequence of finite metric spaces with $|X_n|\rightarrow \infty$ and uniformly bounded geometry, and $X$ be their coarse disjoint union. If $\{X_n\}_{n\in \N}$ is a sequence of asymptotic expanders, then $X$ cannot be coarsely embedded into any Banach space in the class $\CC_0$.
\end{thm}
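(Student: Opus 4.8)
The plan is to argue by contradiction. Assuming that $X$ coarsely embeds into some $E\in\CC_0$, I would use the Structure Theorem to locate inside $\{X_n\}$ a \emph{weakly embedded} sequence of honest bounded-degree expander graphs (Definition~\ref{de:weak.embedding}), and then invoke the fact --- known for expanders and propagated through $\CC_0$ by its closure properties --- that expander graphs do not admit a weak embedding into any space of $\CC_0$.

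For the extraction, recall from Subsection~\ref{ssec:basic.notation} that a coarse embedding of $X=\bigsqcup_n X_n$ into $E$ is the same as a family of maps $f_n\colon X_n\to E$ with common control functions $\rho_-,\rho_+$, so $\rho_-(d(x,x'))\le\|f_n(x)-f_n(x')\|\le\rho_+(d(x,x'))$ for all $n$ and all $x,x'\in X_n$. Apply Theorem~\ref{thm:structure.of.as.exp.general.case}, keeping only the first level $k=1$ of the exhaustion: this yields a constant $c>0$, a radius $R:=R_1>0$ and subsets $Y_n:=Y_{n,1}\subseteq X_n$ with $|Y_n|\ge(1-\alpha_1)|X_n|$ and $|\partial^{Y_n}_R A|>c|A|$ for every nonempty $A\subseteq Y_n$ with $|A|\le\frac12|Y_n|$. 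Turn each $Y_n$ into a graph $Z_n$ by joining two distinct points whenever their distance in $X_n$ is at most $R$. Uniformly bounded geometry provides an $N=N(R)$ with $|B(x,R)|\le N$ for all $n$ and $x$, so $Z_n$ has degree at most $N-1$; the $1$-boundary of a set $A$ in $Z_n$ equals $\partial^{Y_n}_R A$, so each $Z_n$ is a $c$-expander (in particular connected: in a disconnected graph a union of some but not all of its connected components of total size $\le\frac12|Z_n|$ would have empty boundary). Since $|Y_n|\ge(1-\alpha_1)|X_n|\to\infty$, $\{Z_n\}$ is a sequence of bounded-degree $c$-expander graphs with $|Z_n|\to\infty$.

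Now restrict the embeddings: put $g_n:=f_n|_{Y_n}\colon Z_n\to E$. Along an edge of $Z_n$ one has $\|g_n(x)-g_n(y)\|\le\rho_+(R)$, so by the triangle inequality along paths $g_n$ is $L$-Lipschitz for the path metric of $Z_n$ with $L:=\rho_+(R)$, uniformly in $n$. Moreover, for $r>0$ set $S(r):=\sup\{t\ge 0:\rho_-(t)\le r\}$, which is finite since $\rho_-$ is unbounded and increasing; if $\|g_n(y)-g_n(x)\|\le r$ then $\rho_-(d(x,y))\le r$, hence $y\in B_{X_n}(x,S(r))$ and thus $|g_n^{-1}(B(g_n(x),r))|\le N(S(r))$. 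Dividing by $|Y_n|\ge(1-\alpha_1)|X_n|\to\infty$ shows that the condition of Definition~\ref{de:weak.embedding} holds, i.e.\ $\{Z_n\}$ is a weakly embedded sequence of expander graphs in $E$.

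It remains to show that a sequence of bounded-degree $c$-expander graphs with $|Z_n|\to\infty$ cannot be weakly embedded into any $E\in\CC_0$, and this Banach-space input is where I expect the main difficulty to lie. For $E$ a Hilbert space the classical spectral-gap argument runs verbatim with Lipschitz (rather than isometric) maps: a bounded-degree $c$-expander has spectral gap bounded below by some $\lambda_0>0$ depending only on $c$ and the degree bound, so the Poincaré inequality gives $\tfrac{1}{|Z_n|^2}\sum_{x,y}\|g_n(x)-g_n(y)\|^2\le CL^2/\lambda_0$; Markov's inequality then forces $\|g_n(x)-g_n(y)\|\le r$ for a definite fraction of all pairs with a fixed $r$, so $\sup_x|g_n^{-1}(B(g_n(x),r))|\ge\tfrac12|Z_n|$, contradicting weakness. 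The same scheme applies to any $E$ for which expanders satisfy a Poincaré-type inequality: for $E=L^p$, $1\le p<\infty$, via the $p$-Poincaré inequality for expanders (extrapolated from $p=2$ following Matou\v{s}ek), and for uniformly curved $E$ via Pisier's complex-interpolation estimate. Finally, the closure operations in the definition of $\CC_0$ preserve the conclusion: a Fubini-type argument reduces $L^p(\Omega;E)$ to $E$, and post-composition with the sphere homeomorphism handles the sphere-equivalence closure --- here one uses that the argument above only requires the $g_n$ to be uniformly equicontinuous rather than Lipschitz, so composing with a uniformly continuous map is harmless; in particular this covers all the $\ell^p$ and $L^p$ spaces via the Mazur map. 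Combining this with the extraction above gives the desired contradiction. The delicate point is precisely this last step: consolidating the various Poincaré-type inequalities into one statement that survives both the vector-valued $L^p$ construction and the passage along sphere equivalences, where only a uniform modulus of continuity --- not a Lipschitz bound --- is available.
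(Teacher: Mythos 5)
Your overall route is the same as the paper's: apply Theorem~\ref{thm:structure.of.as.exp.general.case} at a single level of the exhaustion, turn the sets $Y_n$ into bounded-degree expander graphs by joining points at distance at most $R$, note that a coarse embedding of $X$ restricts to uniformly Lipschitz maps on these graphs which form a weak embedding into $E$, and then rule this out via Banach-valued Poincar\'e inequalities for expanders combined with the closure properties defining $\CC_0$. The extraction step, the connectivity/degree bounds, the Lipschitz and weak-embedding verifications, and the Hilbert, $L^p$ (Matou\v{s}ek extrapolation), Bochner-space (Fubini) and uniformly curved (Pisier) cases all check out and match what the paper does.

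The gap is in your treatment of the sphere-equivalence closure of $\CC_0$. You propose to ``post-compose with the sphere homeomorphism'' and argue this is harmless because the Poincar\'e argument only needs a uniform modulus of continuity. But the sphere equivalence $\phi\colon S(E_1)\to S(E_2)$ is defined only on the unit spheres, whereas your maps $g_n\colon Z_n\to E_2$ have images of unbounded diameter (forced by the lower control function $\rho_-$), so there is nothing to post-compose with. The natural fix, the radial extension $v\mapsto\norm{v}\,\phi^{-1}(v/\norm{v})$, is uniformly continuous only on bounded subsets of $E_2$: angular perturbations are amplified by the norm, so on unbounded sets neither this extension nor its inverse admits a modulus of continuity, and after composing you lose both the uniform bound on edge displacements and the smallness of preimages of balls. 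The invariance of the $(E,p)$-Poincar\'e property (equivalently, of being an obstruction to weak embeddings) under sphere equivalence is a genuinely nontrivial theorem, proved by different means (normalisation/truncation arguments); it is exactly \cite[Theorem A]{mimura2014sphere} and \cite[Theorem 4.9]{cheng2016sphere} (see also \cite[Appendix A]{ozawa_note_2004}), and this is the black box the paper invokes at the corresponding point. If you replace your composition argument by that citation, your proof is complete and coincides with the paper's.
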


Since Theorem~\ref{thm:structure.of.as.exp.general.case} is established, the proof of Theorem~\ref{thm: non CE general case} is more or less folklore. In fact, it follows from Theorem~\ref{thm:structure.of.as.exp.general.case} that the space $X$ contains weakly embedded expanders and it is well\=/known to experts that this is an obstruction to the existence of such coarse embeddings.
For the convenience of the reader, we record here a more detailed proof. The techniques are by now fairly standard, and are used \emph{e.g.} in \cite{arzhantseva_relative_2015,de_laat_superexpanders_2019,gromov_random_2003,matousek_embedding_1997,mendel_nonlinear_2014,mimura2014sphere,tessera_coarse_2009}. For a different point of view on the theory of embeddings of metric spaces into Banach spaces, we also recommend the survey \cite{baudier2014metric}.

Let $E$ be a (real) Banach space and fix $p\in[1,\infty)$. Following  \cite{mimura2014sphere}, we say that the \emph{$(E,p)$\=/spectral gap} of a finite graph $\CG$ is
\begin{equation}\label{eq:p-spectral.gap}
 \lambda_1(\CG; E,p)\coloneqq \frac{1}{2} \inf
 \frac{\sum_{x\thicksim y}\norm{f(x)-f(y)}^p}{\sum_{x\in\CG}\norm{f(x)-m(f)}^p}
\end{equation}
where $x\sim y$ means that $x,y\in\CG$ are joined by an edge, $m(f)=\frac{1}{\abs{\CG}}\sum_{x\in\CG}f(x)$ is the mean value of $f$ and the infimum is taken over all non\=/constant functions $f\colon \CG\to E$.
Equivalently, $\lambda_1(\CG; E,p)$ is the largest constant $\epsilon$ so that $\CG$ satisfies an \emph{$E$\=/valued (linear) $p$\=/Poincaré inequality}
\begin{equation}\label{eq:p-poincare}
  \frac 12 \sum_{x\thicksim y} \norm{f(x)-f(y)}^p \geq \epsilon \sum_{x\in \CG}\norm{f(x)-m(f)}^p
\end{equation}
for every $f\colon\CG\to E$.

\begin{rem}
There is a competing notion of Poincaré inequality. That is, we say that $\CG$ satisfies an $E$\=/valued \emph{metric} $p$\=/Poincaré inequality if
\[
  \frac{1}{\abs{\text{edges in $\CG$}} }\Bigparen{\frac 12 \sum_{x\thicksim y} \norm{f(x)-f(y)}^p }
  \geq \frac{\epsilon'}{\abs{\CG}^2} \sum_{x,y\in \CG}\norm{f(x)-f(y)}^p.
\]
Using $\norm{f(x)-f(y)}^p\leq\paren{\norm{f(x)}+\norm{f(y)}}^p$ it is easy to show that linear $p$\=/Poincaré inequalities imply metric $p$\=/Poincaré inequalities (possibly with a different constant). On the other hand, the converse implication is not clear in general.
\endgraf
In order to find obstructions to coarse embeddability into Banach spaces, it is sufficient to have metric Poincaré inequalities. Further, metric Poincaré inequalities can be defined also when the target space $E$ is a general metric space (as opposed to Banach space). For this reason they are used \emph{e.g.} in \cite{mendel_nonlinear_2014}.\endgraf
As a last remark, it can be shown that linear $p$\=/Poincaré inequalities are well\=/behaved under change of $p$ (see below). It is not clear whether the same holds for metric $p$\=/Poincaré inequalities.
\end{rem}

The importance of Poincaré inequalities in the context of coarse embedding is given by the following observation: if $\CG$ has degree bounded by $D$ and satisfies an $E$\=/valued (linear) $p$\=/Poincaré inequality with constant $\epsilon$, then for every $L$\=/Lipschitz function $f\colon\CG\to E$ with $m(f)=0$ we have
\begin{equation}\label{eq:bound.on.Lipschitz}
 \frac{2\epsilon}{\abs{\CG}}\sum_{x\in\CG}\norm{f(x)}^p\leq\frac{1}{\abs{\CG}}\sum_{x\thicksim y}L \leq DL^p.
\end{equation}
That is, there is an upper bound on the expected value of (the $p$\=/th power of) $\norm{f(x)}$ which does not depend on $\abs{\CG}$. If one knows that the expected distance between pairs of vertices of $\CG$ is very large, then \eqref{eq:bound.on.Lipschitz} implies that any $L$\=/Lipschitz function to $E$ is far from being bi-Lipschitz.

Back to Theorem~\ref{thm: non CE general case}, if $\{X_n\}$ is a sequence of asymptotic expanders it follows from Theorem~\ref{thm:structure.of.as.exp.general.case} that there exist $c>0$, $R>0$, and a sequence of subspaces $\{Y_n \subseteq X_n\}_{n\in \N}$ with $|Y_n| \geq \frac{3}{4}|X_n|$ and such $|\partial_{R}^{Y_n} A| > c|A|$ for every $A \subseteq Y_n$ with $|A| \leq \frac{1}{2}|Y_n|$.
For each $n$, we consider the finite graph $\mathcal{G}_n$ whose vertex set is $Y_n$, and where $x,y\in Y_n$ are connected by an edge if and only if $d(x,y) \leq R$. Since the $X_n$ have uniformly bounded geometry, the graphs $\CG_n$ have uniformly bounded degree (and hence they are expanders).

Note that the inclusion $Y_n\subset X_n$ need not induce a coarse embedding $\CG_n\hookrightarrow X_n$, it is therefore not possible to directly conclude that whenever the $\CG_n$ do not uniformly coarsely embed into a Banach space $E$ neither do the $X_n$.
On the other hand, the existence of Poincaré inequalities for $\CG_n$ does provide us with an obstruction to the existence of uniform coarse embeddings of the $X_n$.

In fact, if there exists a $p\in[1,\infty)$ and an $\epsilon>0$ such that $\lambda_1(\CG_n; E,p)>\epsilon$ for every $n\in\NN$, then $X$ cannot coarsely embed into $E$.
This can be shown with the following simple argument: assume by contradiction that there exist coarse embeddings $f_n\colon X_n\to E$ with uniform control functions $\rho_-$ and $\rho_+$.
Then the restriction of $f_n$ to $\CG_n$ is $\rho_+(R)$\=/Lipschitz. Subtracting the mean if necessary, we can further assume that $m(f_n)=0$ for all $n$.
Let $D$ be the uniform bound on the degrees of $\CG_n$, it follows from \ref{eq:bound.on.Lipschitz} that the average $\frac{1}{\abs{\CG_n}}\sum_{x\in Y_n}\norm{f(x)}^p$ is uniformly bounded by $C\coloneqq \frac{D\rho_+(R)^{p}}{2\epsilon}$. In particular, $\norm{f(x)}^p\leq 2C$ for at least half of the vertices $x\in \CG_n$. Therefore, at least half of the vertices of $\CG_n$ are within distance $2\rho_-^{-1}(2C)$ from one another. On the other hand, the $\CG_n$ have degree bounded by $D$ and hence the ball of radius $r$ in $\CG_n$ has cardinality at most $D^{r+1}$. This brings us to a contradiction as $\abs{\CG_n}\to\infty$.

\begin{rmk}
 The above argument is nothing but the proof of the well\=/known fact that Poincaré inequalities provide obstructions to the existence of weak embeddings (see \cite{arzhantseva_relative_2015, gromov_random_2003}).
If $X=\bigsqcup_{n\in\NN} X_n$ is as in Theorem~\ref{thm: non CE general case}, it follows from the bounded geometry assumption that the inclusions $Y_n\subseteq X_n$ yield a weak embedding $\CG_n\to X$. Further, if $f\colon X\to E$ was a coarse embedding, then the compositions $\CG_n\to X\to E$ would again be a weak embedding. The above argument proves that such a weak embedding cannot exist.
\end{rmk}

As already remarked, the graphs $\CG_n$ appearing in the above argument are a sequence of expanders.
Therefore, all that remains to do to prove Theorem~\ref{thm: non CE general case} is to show that expanders satisfy the relevant Banach\=/valued (linear) $p$\=/Poincaré inequalities.
For sake of brevity, we will use Mimura's notation and say that a sequence of graphs $\CG_n$ is a sequence of \emph{$(E,p)$\=/anders}\footnote{%
In \cite{mimura2014sphere}, Banach spaces are usually denoted by $X$. The nomenclature $(X,p)$\=/ander is a pun.}
if they have uniformly bounded degree, $\abs{\CG_n}\to\infty$ and there is a $\epsilon>0$ such that $\lambda_1(\CG_n;E,p)>\epsilon$ for every $n$. Theorem~\ref{thm: non CE general case} will be proved when we show that expanders are $(E,p)$\=/anders for every $E\in\CC_0$ and some $p\in[1,\infty)$. The following discussion is basically an illustration of \cite[Corollary C]{mimura2014sphere} and \cite[Section 4]{cheng2016sphere}.

It is classical that a sequence of graphs is a sequence of expanders if and only if they are $(\RR,2)$\=/anders (the proof is fairly simple, and it can be extrapolated \emph{e.g.} from the arguments in  \cite{matousek_embedding_1997}). It is also easy to prove the analogous statement for $(\RR,1)$\=/anders.

Matou\v{s}ek proved in \cite{matousek_embedding_1997} that being $(\RR,p)$\=/anders does not depend on $p$ (his strategy is often called Matou\v{s}ek extrapolation \cite{mendel_nonlinear_2014,mimura2014sphere}).
This was greatly generalized in \cite{mimura2014sphere}, and finally Cheng proved in \cite[Theorem 4.9]{cheng2016sphere} that for any fixed Banach space $E$ the property of being an $(E,p)$\=/ander does not depend on $p\in[1,\infty)$. In particular, one could in principle avoid considering $p$\=/Poincaré inequalities and just focus on, say, $2$\=/Poincaré inequalities. Yet, considering general $p$ has its own advantages. In fact, noting that
\[
 \sum_{x\in\CG}\norm{f_x}^p_{L^p}
 =\sum_{x\in\CG}\int_{\Omega}\abs{f_x(\omega)}^p d\mu(\omega)
 =\int_{\Omega}\sum_{x\in\CG}\abs{f_x(\omega)}^p d\mu(\omega),
\]
it is easy to show that $\lambda_1(\CG_n;\RR,p)=\lambda_1(\CG_n;L^p_\RR,p)$. This proves that asymptotic expanders cannot coarsely embed into $\ell^p_\RR$ and $L^p_\RR$.

More generally, the same argument applies to every $p$\=/Bochner space and shows that $\lambda_1(\CG_n;E,p)=\lambda_1(\CG_n;L^p(\Omega;E)_\RR,p)$ for every measure space $(\Omega,\mu)$ and Banach space $E$. In particular, if the $\CG_n$ are $(E,p)$\=/anders then they are also $(L^p(\Omega;E),p)$\=/anders. Since $\CCC$ is isometric to $\RR^2\subset\ell^2_\RR$, this also implies that asymptotic expanders do not coarsely embed into $\ell^p_\CCC$ and $L^p_\CCC$.

The fact that expanders are $(E,2)$\=/anders for every uniformly curved Banach space $E$ follows from the discussion in \cite[Chapter 3]{pisier_complex_2010}.

All that remains to show is that the property of being an $(E,p)$\=/ander is stable under sphere equivalence of Banach spaces. This is proved in \cite[Theorem A]{mimura2014sphere} and \cite[Theorem 4.9]{cheng2016sphere} (see also \cite[Appendix A]{ozawa_note_2004}). This concludes the proof of Theorem~\ref{thm: non CE general case}.

\begin{rmk}\label{rmk:coarse.image.of.asymptotic.expander}
 It is interesting to note that metric spaces that \emph{coarsely} contain bounded geometry asymptotic expanders must also \emph{isometrically} contain such asymptotic expanders. Namely, if $\braces{X_n}_{n\in\NN}$ are asymptotic expanders with uniformly bounded geometry and $f_n\colon X_n\to Y$ are coarse embeddings with uniform control functions, then the sequence of sets $\braces{f_n(X_n)}_{n\in\NN}$ equipped with the subset metric have uniformly bounded geometry and are uniformly coarsely equivalent to the $X_n$. Since asymptotic expansion for spaces with bounded geometry is preserved under coarse equivalence (\cite[Theorem 3.11]{intro}), we deduce that $\braces{f_n(X_n)}_{n\in\NN}$ is a sequence of asymptotic expanders that embed isometrically in $Y$.  
\end{rmk}

\section{Some examples}\label{sec:examples}
In this section we are mostly interested in producing examples of families of graphs with uniformly bounded degree that are asymptotic expanders but non-expanders.

\noindent\emph{A Foreword.} The examples that we are going to discuss are obtained via simple geometric tricks and they are more of a proof-of-concept rather then actual examples.
The bad news is that Theorem~\ref{thm:structure.of.as.exp.} implies that, in some sense, every asymptotic expanders can be obtained from actual expanders by using similar tricks.
The good news is that asymptotic expanders are not a mere artificial construction, but they appear quite naturally.
A prime ``natural'' source of examples of asymptotic expanders that are not expanders is provided in \cite{li2021asymptotic}, where it is proved that families of graphs approximating a \emph{strongly ergodic} action are asymptotic expanders. On the other hand, it is known that such graphs are a sequence of expanders if and only if the action is \emph{expanding in measure} \cite{vigolo_measure_2019}. In particular, graphs obtained by approximating a measure\=/preserving action that is strongly ergodic but does not have a spectral gap, are asymptotic expanders without being expanders.
One explicit example of such an action is given in \cite{schmidtAmenabilityKazhdanProperty1981}.

\begin{example}\label{eg:silly.example}
  As already remarked in \cite{intro} (see also \cite{Wang07}), an obvious example of sequences of asymptotic expander graphs that are not expanders is as follows: let $\braces{Y_n}_{n\in\NN}$ be a sequence of expanders and let $\braces{Z_n}_{n\in\NN}$ be any sequence of graphs such that $\frac{\abs{Z_n}}{\abs{Y_n}}\to 0$. Let $X_n$ be a connected graph obtained by joining arbitrarily $Y_n$ and $Z_n$ with some edges. Then $\braces{X_n}_{n\in\NN}$ is a sequence of asymptotic expanders. Furthermore, if $\abs{Z_n}\to\infty$ and the graphs $Y_n$ and $Z_n$ are connected via a single edge, then $\braces{X_n}_{n\in\NN}$ is \emph{not} a sequence of expanders. In \cite{Wang07}, sequences of graphs obtained as above are called \emph{perturbed expanders}.
\end{example}

\begin{rmk}\label{rmk:non.ce.asymptotic.expander}
 Since it is known that there exist uncountably many families of expanders that are pairwise not coarsely equivalent \cite{fisher_rigidity_2019}, it is then very easy to show that the above construction can be used to produce uncountably many non\=/coarsely\=/equivalent classes of asymptotic expanders that are not expanders.

 For example, assume that we are given for every $i\in I$ two families of expanders $\braces{Y_n^{(i)}}_{n\in\NN}$ and $\braces{Z_n^{(i)}}_{n\in\NN}$, and assume that $X_n^{(i)}$ is obtained by joining $Y_n^{(i)}$ to $Z_n^{(i)}$ with a single edge. Let $X^{(i)}$ be a coarse disjoint union. Up to reindexing over $n$, a coarse equivalence $f\colon X^{(i)}\to X^{(j)}$ must come from uniform coarse equivalences $f_n\colon X_n^{(i)}\to X_n^{(j)}$ \cite{khukhro_expanders_2017}. The edge joining $Y_n^{(i)}$ to $Z_n^{(i)}$ is a bottleneck that ``coarsely disconnects'' $X_n$---it is in fact the unique bottleneck. It is then easy to deduce that the $f_n$ must restrict to uniform coarse equivalences between $Y_n^{(i)}$ and $Y_n^{(j)}$. If the $\braces{Y_n^{(i)}}_{n\in\NN}$ are assumed to be non\=/coarsely\=/equivalent then no such $f$ can exist.
\end{rmk}

\begin{rmk}\label{rmk:gross.trick}
  It may be interesting to note that it is possible to realize asymptotic expanders that are not expanders as a sequence of Schreier coset graphs. This is in fact very easily done using ``Gross Trick'' \cite{gross1977every}: one can use the Petersen 2\=/factor Theorem to deduce that every finite $2d$\=/regular graph (possibly with loops and multiple edges) is a Schreier graph.
  One can always turn a graph of degree at most $d$ into a $2d$ regular graph by doubling all the edges and adding loops at every vertex having degree less than $2d$.
\end{rmk}

\begin{example}\label{eg:Schreier.triv.intersection}
Recall that a sequence of graphs is said to have \emph{large girth} if the length of the smallest cycle in the graphs tends to infinity. One can construct large girth examples of asymptotic expanders that are not expanders using the following modification of the previous example. Let $\braces{Y_n}_{n\in\NN}$ be a large girth sequence of expanders and let $\braces{Z_n}_{n\in\NN}$ be any large girth sequence of graphs such that $\frac{\abs{Z_n}}{\abs{Y_n}}\to 0$. Let $X_n$ be the connected graph obtained by deleting the edges $(y,y')$ in $Y_n$ and $(z,z')$ in $Z_n$, and adding two new edges $(y,z)$ and $(y',z')$. It is clear that this will be a large girth sequence of asymptotic expanders that are not expanders. Taking the $Y_n$ and $Z_n$ to be $2d$-regular graphs, the resulting $X_n$ will be Schreier coset graphs of the free group $F_d$ with respect to a sequence of finite-index subgroups with trivial intersection, thanks to the large girth condition. 

It is proved in \cite[Theorem 2.8]{Hume_continuum_2017} that there exist uncountably many non coarsely equivalent families of expanders with large girth. Making a judicious use of such families, one can choose the graphs $Z_n$ in this construction to be large girth expanders. It then follows from the argument of Remark~\ref{rmk:non.ce.asymptotic.expander} that one can construct this way uncountably many large girth asymptotic expanders that are not expanders and are not coarsely equivalent to one another.
\end{example}

\begin{rmk}
 If $\Gamma_1,\Gamma_2,\ldots $ are the finite index subgroups of $F_2$ obtained in Example~\ref{eg:Schreier.triv.intersection}, one can show that $\Gamma_n<F_2$ cannot be a \emph{normal} subgroup when $n$ is large enough (this will follow from Theorem~\ref{thm:symmetric.as.exp.iff.exp}). Furthermore, one can not expect that $\Gamma_{n+1}<\Gamma_n$, in general.

 On the other hand, Abért and Elek managed to use random methods to show (\cite[Theorem 5]{abert2012dynamical}) that there exist chains of nested subgroups $F_2>\Gamma_1>\Gamma_2>\cdots$ with trivial intersection such that the associated Schreier coset graphs are not expanders but the ``boundary action is strongly ergodic'' (see \cite{abert2012dynamical} for the relevant definitions).
 One can then show that these Schreier graphs are a sequence of asymptotic expanders \cite{li2021asymptotic}.
\end{rmk}

The following example is somewhat more technical, and it shows that asymptotic expanders need not be perturbed expanders in the sense of \cite{Wang07}. The idea is inspired from \cite{schmidtAmenabilityKazhdanProperty1981}.

\begin{example}
 Choose for every $n\in\NN$ and $k\leq n$ a positive number $0<a_{n,k}\in\NN$ such that:
 \begin{enumerate}
  \item for every fixed $k$, we have $a_{n,k}\to\infty$ as $n\to\infty$;
  \item for every fixed $n$, we have $a_{n,k}\geq a_{n,k'}$ for every $k<k'\leq n$.
  \item for every $\alpha>0$ there exists a $\bar k$ such that $\sum_{i=\bar k}^n a_{n,i}<\alpha\sum_{i=0}^n a_{n,i}$ for every $n\geq \bar k$;
  \item for every $c>0$ there exists a $\bar k$ such that $a_{n,\bar k}<c\sum_{i=\bar k}^n a_{n,i}$ for every $n$ large enough.
 \end{enumerate}
 For example, letting $a_{n,k}\coloneqq \lceil\frac{n^2}{k^2}\rceil$ satisfies all the above requirements.

 For every $k$, let $\braces{Z_{n,k}}_{n\geq k}$ be a sequence of expanders with $\abs{Z_{n,k}}=a_{n,k}$ (it is possible to produce sequences of expanders with arbitrarily prescribed cardinalities, see \emph{e.g.} \cite[Corollary 17]{de_laat_superexpanders_2019}).
 For every $n$ and $0<k\leq n$, choose injective functions $f_{n,k}\colon Z_{n,k}\to Z_{n,k-1}$. Let $X_n$ be the graph obtained by taking $\bigcup_{k\leq n} Z_{n,k}$ and adding all the edges of the form $\paren{v,f_{n,k}(v)}$ as $v$ varies in $Z_{n,k}$ and $0<k\leq n$. Finally, for every fixed $k$ let $Y_{n,k}\subset X_n$ be the full subgraph having vertices $\bigcup_{i\leq \max\braces{k,n}}Z_{n,i}$.

 For every fixed $k$, it is easy to show that the sequence $\braces{Y_{n,k}}_{n\in\NN}$ is a family of expanders. Condition (3) implies that the $Y_{n,k}$ form an exhaustion by expanders. It therefore follows from Theorem~\ref{thm:structure.of.as.exp.} that $\braces{X_n}_{n\in\NN}$ is a sequence of asymptotic expanders. On the other hand, it follows from condition (4) that for every $c>0$ there is a $k$ so that the sets $X_n\smallsetminus Y_{n,k}$ are $c$\=/\fol sets for every $n$ large enough. Hence $\braces{X_n}_{n\in\NN}$ is not a sequence of expanders.

 If the $a_{n,k}$ are chosen so that for every fixed $k$ there is a $\beta<1$ such that $\abs{Y_{n,k}}\leq \beta\abs{X_n}$ for every $n$ large enough, then the \fol sets are ``linearly big'' (this is the case for $a_{n,k}\coloneqq \lceil\frac{n^2}{k^2}\rceil$). In this case it follows that $\braces{X_n}_{n\in\NN}$ is not even a sequence of perturbed expanders.
\end{example}

\section{Ghost projections and the coarse Baum--Connes conjecture}\label{sec:ghosts}

In this section, we will use Theorem~\ref{thm:structure.of.as.exp.general.case} to show that the averaging projection $P_X$ belongs to the uniform Roe algebra $C^*_u(X)$ if and only if $X$ is a sequence of asymptotic expanders.
This will allow us to apply results from \cite{FinnSell2014} to produce uncountably many \emph{new} counterexamples to the coarse Baum--Connes conjecture in the sense that they need not have spectral gaps.

\begin{thm}\label{thm: existence of ghost proj}
Let $\{X_n\}_{n\in \N}$ be a sequence of finite metric spaces of uniformly bounded geometry with $|X_n| \to \infty$. Let $X$ be their coarse disjoint union, and $P_X$ be the averaging projection. Then the following are equivalent:
\begin{enumerate}
  \item  $P_X$ belongs to the uniform Roe algebra $C^*_u(X)$;
  \item $P_X$ is quasi-local;
  \item $\{X_n\}_{n\in \N}$ is a sequence of asymptotic expanders.
\end{enumerate}
\end{thm}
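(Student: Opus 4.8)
The plan is to prove the cycle of implications $(1) \Rightarrow (2) \Rightarrow (3) \Rightarrow (1)$, where the first two are either trivial or already established in the excerpt, and the substance is entirely in $(3) \Rightarrow (1)$. Indeed, $(1) \Rightarrow (2)$ is immediate since finite propagation operators are quasi-local and quasi-locality passes to norm limits, so $C^*_u(X) \subseteq \Cq(X)$; and $(2) \Rightarrow (3)$ is exactly the cited Proposition \cite[Theorem~B]{intro} (the averaging projection is quasi-local iff $\{X_n\}$ is a sequence of asymptotic expanders). So I would state these two steps in a sentence each and then concentrate on showing that if $\{X_n\}_{n\in\NN}$ is a sequence of asymptotic expanders, then $P_X \in C^*_u(X)$.

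For $(3) \Rightarrow (1)$, the idea is to approximate $P_X = \bigoplus_n P_{X_n}$ in operator norm by finite propagation operators, using the uniform exhaustion supplied by Theorem~\ref{thm:structure.of.as.exp.general.case}. Fix $\varepsilon > 0$. By that theorem there are constants $c>0$, a radius $R = R_k$, and subsets $Y_n := Y_{n,k} \subseteq X_n$ with $|Y_n| \geq (1-\alpha_k)|X_n|$ and such that the graph $\CG_n$ on $Y_n$ with edges $\{d(x,y) \leq R\}$ satisfies a Cheeger inequality with constant $c$ (uniformly in $n$), hence is a genuine expander; here $\alpha_k$ is as small as we like. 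It is classical that for a sequence of expander graphs $\{\CG_n\}$ the averaging projections $P_{\CG_n} = P_{Y_n}$ (viewed in $\B(\ell^2(Y_n))$) can be approximated uniformly in $n$ by polynomials in the Markov/adjacency operator of $\CG_n$, which have propagation bounded in terms of the degree of the polynomial times $R$; this is precisely the argument that shows $P_X \in C^*_u(X)$ for a coarse union of expanders (see \cite[Corollary~3.4]{intro}, \cite[Example~5.3]{MR2871145}). So there is a finite propagation operator $S = \bigoplus_n S_n$ on $\ell^2(\bigsqcup_n Y_n)$ with $\|S_n - P_{Y_n}\| < \varepsilon$ for all $n$, and its propagation is finite because the $Y_n$ sit inside the coarse disjoint union $X$ with the spaces pushed infinitely far apart. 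Extending $S$ by zero to $\ell^2(X)$ gives a finite propagation operator on $\ell^2(X)$ which I will still call $S$.

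The remaining point is to control $\|P_{X_n} - P_{Y_n}\|$ uniformly in $n$, using that $|X_n \smallsetminus Y_n| \leq \alpha_k |X_n|$ is a small fraction. Both $P_{X_n}$ and $P_{Y_n}$ are rank-one projections, onto the lines spanned by $\chi_{X_n}$ and $\chi_{Y_n}$ respectively; the norm difference of two rank-one projections onto unit vectors $u, v$ is $\sqrt{1 - |\langle u, v\rangle|^2}$, and here $\langle \chi_{X_n}/\sqrt{|X_n|},\, \chi_{Y_n}/\sqrt{|Y_n|}\rangle = \sqrt{|Y_n|/|X_n|} \geq \sqrt{1-\alpha_k}$. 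Hence $\|P_{X_n} - P_{Y_n}\| \leq \sqrt{\alpha_k}$, which is $< \varepsilon$ once $k$ is chosen large enough. Combining, $\|P_{X_n} - S_n\| \leq \|P_{X_n} - P_{Y_n}\| + \|P_{Y_n} - S_n\| < 2\varepsilon$ for every $n$, so $\|P_X - S\| < 2\varepsilon$; since $\varepsilon$ was arbitrary and each $S$ has finite propagation, $P_X \in \overline{\C_u[X]} = C^*_u(X)$.

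The main obstacle, and the only step requiring genuine care rather than bookkeeping, is the uniform polynomial approximation of $P_{\CG_n}$ by finite propagation operators: one must check that a single polynomial (equivalently, a single degree and a single choice of coefficients) works simultaneously for the whole expander sequence $\{\CG_n\}$, and that the passage from the abstract spectral gap of the averaging operator of $\CG_n$ to an explicit propagation bound is uniform. This is standard — it is the classical proof that $P_X$ lies in the Roe algebra for genuine expanders — but it is where the work is, and I would either cite \cite[Corollary~3.4]{intro} directly or reproduce the short functional-calculus argument: the normalized Markov operator $M_n$ of $\CG_n$ has $P_{Y_n}$ as its spectral projection onto the eigenvalue $1$, which is isolated with a gap bounded below uniformly in $n$ by the expansion constant, so a fixed continuous function equal to $1$ near $1$ and $0$ on the rest of the common spectral region can be uniformly approximated by a fixed polynomial, and $p(M_n)$ has propagation at most $R \cdot \deg p$.
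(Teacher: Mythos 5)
Your proposal is correct and takes essentially the same route as the paper: (1)$\Rightarrow$(2) and (2)$\Leftrightarrow$(3) are handled identically (quasi-locality of Roe algebra elements plus \cite[Theorem~B]{intro}), and for (3)$\Rightarrow$(1) both arguments use Theorem~\ref{thm:structure.of.as.exp.general.case} to build expander graphs $\CG_{n,k}$ on the sets $Y_{n,k}$ at scale $R_k$, invoke the classical expander fact to place their averaging projections (up to small error) in $C^*_u(X)$ --- the paper transports $P_{Y_k}\in C^*_u(Y_k)$ through the isometry $V_k$, you approximate directly by finite-propagation operators --- and then estimate $\|P_{X_n}-P_{Y_{n,k}}\|$, where your exact rank-one formula $\sqrt{\alpha_k}$ is even marginally sharper than the paper's Frobenius bound $\sqrt{2\alpha_k}$. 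One small nitpick: the graphs $\CG_{n,k}$ need not be regular, so in your functional-calculus sketch the uniform averaging projection is the spectral projection at $0$ of the graph Laplacian rather than at $1$ of the Markov operator; this changes nothing of substance, and citing \cite[Corollary~3.4]{intro} as you propose covers it in any case.
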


\begin{proof}
 Recall that \cite[Theorem B]{intro} states that $X$ is a coarse disjoint union of a sequence of asymptotic expanders if and only if $P_X$ is quasi\=/local. Recall also that any element of the uniform Roe algebra must be quasi\=/local (Subsection~\ref{ssec:Roe.algebras}). It is thus enough to show ``(3) $\Rightarrow$ (1)''.

 From Theorem \ref{thm:structure.of.as.exp.general.case}, there exist $c>0$, a positive decreasing sequence $\{\alpha_k\}_{k \in \N}$ with $\alpha_k \to 0$, a positive sequence $\{R_k\}_{k\in \N}$, and for each $n \in \N$ a sequence of subsets $\{Y_{n,k}\}_{k\in \N}$ in $X_n$, satisfying: for each $n,k\in \N$,
  \begin{itemize}
    \item $|Y_{n,k}| \geq (1-\alpha_k)|X_n|$;
    \item for each non-empty $A \subseteq Y_{n,k}$ with $|A| \leq \frac{1}{2}|Y_{n,k}|$, we have $|\partial_{R_k}^{Y_{n,k}} A| > c|A|$.
  \end{itemize}

Let $d$ be the metric on $X$ and fix a $k \in \N$. For each $n \in \N$, we can proceed as in the proof of Theorem~\ref{thm:intro:non.CE} and make $Y_{n,k}$ into a graph $\CG_{n,k}$ by connecting vertices $v,w\in Y_{n.k}$ with an edge if and only if $d(v,w) \leq R_k$. It follows from the assumption that $\{\CG_{n,k}\}_{n\in \N}$ is a sequence of expanders.

We denote by $d_{n,k}$ the edge-path metric induced on $Y_{n.k}$ from $\CG_{n,k}$, and we let $(Y_k,d_k)$ be a coarse disjoint union of $\{(Y_{n,k},d_{n,k})\}_{n\in \N}$. Then the averaging projection $P_{Y_k}$ belongs to the uniform Roe algebra $C^*_u(Y_k)$. Furthermore, since $d(v,w) \leq R_k \cdot d_{n,k}(v,w)$ for every $v,w \in Y_{n,k}$, we can assume that the metric $d_k$ satisfies
\begin{equation}\label{EQ1}
d(v,w) \leq R_k \cdot d_{k}(v,w)
\end{equation}
for every $v,w\in Y_k$.

Now consider the inclusion $i_k\colon Y_k \hookrightarrow X$. It induces an isometric embedding $V_k\colon \ell^2(Y_k) \to \ell^2(X)$, and we consider the adjoint homomorphism
$$\Ad_{V_k}\colon \B(\ell^2(Y_k)) \to \B(\ell^2(X)), \quad T \mapsto V_kTV_k^*$$
where $V^*_k\colon \B(\ell^2(X))\to\B(\ell^2(Y_{n,k}))$ is the adjoint of $V_k$ (which turns out to be the orthogonal projection).

From Inequality (\ref{EQ1}), we have $\Ad_{V_k}(\C_u[Y_k]) \subseteq \C_u[X]$. Since $\Ad_{V_k}$ preserves operator norms, we obtain a $C^*$-monomorphism
$$(i_k)_\ast\coloneqq \Ad_{V_k}|_{C^*_u(Y_k)}\colon C^*_u(Y_k) \to C^*_u(X).$$

For each $k$, let $Q_k\coloneqq (i_k)_\ast(P_{Y_k})=V_kP_{Y_k}V_k^*$ in $C^*_u(X)$. We can write $Q_k$ as a block\=/diagonal matrix $Q_k=(Q_{k,n})_{n\in \N}$ where $Q_{k,n} \in \B(\ell^2(Y_{n,k}))$. Direct calculation shows that for each $n$ and $x,y \in X_n$:
\begin{equation*}
(P_{X_n}-Q_{k,n})_{x,y}=
\begin{cases}
  ~-\frac{|X_n|-|Y_{n,k}|}{|Y_{n,k}| \cdot |X_n|} & \text{if } x,y \in Y_{n,k}; \\
  ~\frac{1}{|X_n|} & \mbox{otherwise}.
\end{cases}
\end{equation*}
Since each operator $P_{X_n}-Q_{k,n}$ is represented by a finite matrix, from linear algebra we know that its operator norm does not exceed its Frobenius norm:
\begin{align*}
\|P_{X_n}-Q_{k,n}\|_F^2 =& \sum_{x,y \in X_n}\big|(P_{X_n}-Q_{k,n})_{x,y}\big|^2 \\
 =& \sum_{(x,y) \in Y_{n,k}^2}\frac{(|X_n|-|Y_{n,k}|)^2}{|Y_{n,k}|^2\cdot |X_n|^2} + \sum_{(x,y) \in X_n^2\setminus Y_{n,k}^2}\frac{1}{|X_n|^2}\\
 =& \frac{(|X_n|-|Y_{n,k}|)^2+|X_n|^2 - |Y_{n,k}|^2}{|X_n|^2} \\
 \leq &\frac{\alpha_k^2 |X_n|^2 + |X_n|^2 - (1-\alpha_k)^2|X_n|^2}{|X_n|^2}\\
 =& 2\alpha_k.
\end{align*}

 Since $P_X$ and $Q_X$ are block\=/diagonal, this implies that
$$\|P_X - Q_k\| \leq \sqrt{2\alpha_k}.$$
By the assumption that $\alpha_k \to 0$, we have $Q_k \to P_X$ in the operator norm as $k \to \infty$. Since $Q_k \in C^*_u(X)$ for every $k$, we have proved that $P_X \in C^*_u(X)$.
\end{proof}

\begin{rmk}
 We restricted the statement of Theorem~\ref{thm: existence of ghost proj in Roe} to sequences of spaces of uniformly bounded geometry because we defined the (uniform) Roe algebra only for spaces of bounded geometry.
\end{rmk}

The following follows directly from Remarks~\ref{rmk:uniformRoe.embeds.Roe}, Remark \ref{rmk:average.non.compact.ghost}, and Theorem~\ref{thm: existence of ghost proj}:

\begin{cor}\label{cor: existence of ghost proj in Roe alg}
Let $\{X_n\}_{n\in \N}$ be a sequence of asymptotic expanders of bounded geometry, and $X$ be their coarse disjoint union. Then there exists a non-compact ghost projection in the Roe algebra $C^*(X)$.
\end{cor}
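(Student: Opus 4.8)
The plan is to transport the averaging projection $P_X$ from the uniform Roe algebra into the Roe algebra via the isometric embedding of Remark~\ref{rmk:uniformRoe.embeds.Roe}, and then to check that being a projection, being non-compact, and being a ghost are all preserved under this transport.

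First I would invoke Theorem~\ref{thm: existence of ghost proj}: since $\{X_n\}_{n\in\NN}$ is a sequence of asymptotic expanders of bounded geometry (so that the uniform Roe algebra is defined), the averaging projection $P_X$ lies in $C^*_u(X)$. Next, let $\iota\colon C^*_u(X)\to C^*(X)$ be the isometric $\ast$-embedding from Remark~\ref{rmk:uniformRoe.embeds.Roe}, which in matrix form sends $T=[T_{x,y}]_{x,y}$ to $\iota(T)=[T_{x,y}\otimes P_0]_{x,y}$ for a fixed rank-one projection $P_0$, and set $Q\coloneqq\iota(P_X)\in C^*(X)$. Since $\iota$ is a $\ast$-homomorphism it carries projections to projections, so $Q$ is a projection. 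For non-compactness, observe that $P_X=\bigoplus_n P_{X_n}$ has infinite rank as $\abs{X_n}\to\infty$, and that the $n$-th diagonal block of $Q$ is the rank-one operator $P_{X_n}\otimes P_0$; these blocks are pairwise orthogonal, so $Q$ has infinite rank and hence is not compact. For the ghost property, recall from Remark~\ref{rmk:average.non.compact.ghost} that $P_X$ is a ghost: for each $\varepsilon>0$ there is a finite $F\subseteq X$ with $\norm{(P_X)_{x,y}}<\varepsilon$ whenever $(x,y)\notin F\times F$. Since $\norm{Q_{x,y}}=\norm{(P_X)_{x,y}\otimes P_0}=\norm{(P_X)_{x,y}}$ because $\norm{P_0}=1$, the same $F$ shows that $Q$ is a ghost.

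I do not expect any genuine obstacle here: the corollary is a formal consequence of Theorem~\ref{thm: existence of ghost proj} together with the bookkeeping already recorded in Remarks~\ref{rmk:uniformRoe.embeds.Roe} and~\ref{rmk:average.non.compact.ghost}. The only point requiring a moment's attention is that the (non-canonical) embedding $\iota$ preserves all three features simultaneously, and this reduces to the elementary facts that $\ast$-homomorphisms preserve projections, that tensoring each matrix coefficient with a fixed rank-one projection preserves its norm and sends finite-rank blocks to finite-rank blocks, and that $\iota$ is isometric. All the real content of the statement has already been carried out in the proof of Theorem~\ref{thm: existence of ghost proj}.
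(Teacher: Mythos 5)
Your argument is correct and is exactly the paper's proof: the corollary is stated there as following directly from Theorem~\ref{thm: existence of ghost proj} together with Remarks~\ref{rmk:uniformRoe.embeds.Roe} and~\ref{rmk:average.non.compact.ghost}, i.e.\ transport $P_X$ via the embedding $\iota(T)=[T_{x,y}\otimes P_0]$ and note that projection, non-compactness and the ghost condition are preserved. Your verification of these three preservation facts simply spells out what the paper leaves implicit.
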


Furthermore, we can extend the above result to spaces containing weakly embedded asymptotic expanders in a ``proper'' way:
\begin{prop}\label{prop: weak embedding and ghost projection.finite to 1}
Let $Y$ be a metric space of bounded geometry. Assume there exists a sequence of asymptotic expanders $\{X_n\}_{n\in \N}$ of uniformly bounded geometry which admits a weak embedding $\{f_n\colon X_n \to Y\}_{n\in \N}$ such that for every $y\in Y$ the preimage $f_n^{-1}(y)$ is empty for all but finitely many $n\in\NN$.
Then there exists a non-compact ghost projection in the Roe algebra $C^*(Y)$.
\end{prop}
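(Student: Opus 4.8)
The strategy is to reduce matters to the setting of Theorem~\ref{thm: existence of ghost proj} and then transport the resulting averaging projection into $C^*(Y)$, using the infinite multiplicity available in $\ell^2(Y;\H)$ to absorb the failure of the $f_n$ to be injective. \emph{First I would extract a subsequence with disjoint images.} Each $X_n$ is finite, so each $f_n(X_n)$ is a finite subset of $Y$, and the hypothesis on the fibres says precisely that every finite subset $S\subseteq Y$ satisfies $S\cap f_n(X_n)=\emptyset$ for all but finitely many $n$. I would therefore choose indices $n_1<n_2<\cdots$ recursively so that
\[
  f_{n_j}(X_{n_j})\cap\bigcup_{i<j}f_{n_i}(X_{n_i})=\emptyset\qquad\text{for every }j,
\]
and put $Z_j:=f_{n_j}(X_{n_j})$, a pairwise disjoint family of finite subsets of $Y$. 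Note that $\{X_{n_j}\}_{j\in\N}$ is again a sequence of asymptotic expanders of uniformly bounded geometry with $|X_{n_j}|\to\infty$, so Theorem~\ref{thm: existence of ghost proj} applies to it.

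\emph{Next I would push the averaging projection into $C^*(Y)$.} Let $\widetilde X$ be the coarse disjoint union of $\{X_{n_j}\}_{j\in\N}$ and let $\Phi\colon\widetilde X\to Y$ agree with $f_{n_j}$ on $X_{n_j}$. Then $\Phi$ is a controlled map: on each piece it is $L$-Lipschitz, and for any prescribed distance only finitely many pairs of distinct pieces of $\widetilde X$ lie that close together. Since the $Z_j$ are disjoint, each fibre $\Phi^{-1}(y)$ is finite and contained in a single $X_{n_j}$. Choosing, for each $y\in Y$, an orthonormal family $\{e^y_v:v\in\Phi^{-1}(y)\}$ in $\H$ and setting $V\delta_v:=\delta_{\Phi(v)}\otimes e^{\Phi(v)}_v$ defines an isometry $V\colon\ell^2(\widetilde X)\to\ell^2(Y;\H)$; because $\Phi$ is controlled and finite-to-one, $\Ad_V(T)=VTV^*$ sends a finite-propagation operator to one of finite propagation whose matrix coefficients have rank at most $|\Phi^{-1}(y)|\cdot|\Phi^{-1}(y')|$, hence to a locally compact operator. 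Thus $\Ad_V$ restricts to an isometric $\ast$-homomorphism $C^*_u(\widetilde X)\to C^*(Y)$, and since $P_{\widetilde X}\in C^*_u(\widetilde X)$ by Theorem~\ref{thm: existence of ghost proj}, the operator $P:=V P_{\widetilde X}V^*$ is a projection in $C^*(Y)$.

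\emph{Finally I would check that $P$ is a non-compact ghost.} The range of $P$ is the isometric image of the closed linear span of the mutually orthogonal vectors $\chi_{X_{n_j}}$, so it is infinite-dimensional and $P$ is not compact. A direct computation shows that the $(y,y')$-matrix coefficient of $P$ vanishes unless $y$ and $y'$ lie in the same $Z_j$, in which case it is a rank-one operator of norm $|X_{n_j}|^{-1}\sqrt{|f_{n_j}^{-1}(y)|\,|f_{n_j}^{-1}(y')|}\le\sup_{z\in X_{n_j}}|f_{n_j}^{-1}(f_{n_j}(z))|/|X_{n_j}|$. Since $f_{n_j}^{-1}(\{y\})\subseteq f_{n_j}^{-1}(B(y,1))$, the definition of a weak embedding forces the last quantity to tend to $0$ as $j\to\infty$; so given $\varepsilon>0$, choosing $J$ with this quantity $<\varepsilon$ for all $j\ge J$ and putting $K_\varepsilon:=\bigcup_{j<J}Z_j$ (a finite set) gives $\|P_{y,y'}\|<\varepsilon$ whenever $(y,y')\notin K_\varepsilon\times K_\varepsilon$. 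Hence $P$ is a non-compact ghost projection in $C^*(Y)$.

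The only genuinely non-formal point is the subsequence extraction: it is exactly the hypothesis that $f_n^{-1}(y)$ is empty for all but finitely many $n$ that makes disjoint images available, and this in turn is what lets one estimate $\|P_{y,y'}\|$ by the \emph{single} term $\sup_z|f_{n_j}^{-1}(z)|/|X_{n_j}|$, rather than by a sum of such terms over all indices $j$ with $y\in Z_j$ (and such a sum need not be small). With that reduction in hand, the remaining steps run parallel to the proof of Theorem~\ref{thm: existence of ghost proj}, with the inclusion $Y_k\hookrightarrow X$ there replaced by the controlled finite-to-one map $\Phi$, and the scalar uniform Roe algebra replaced by the $\H$-valued Roe algebra.
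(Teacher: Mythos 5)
Your proof is correct, and it reaches the same destination as the paper's---conjugating the averaging projection of (a sub)sequence of asymptotic expanders by an isometry that covers the weak embedding, then verifying the ghost property from the $\sqrt{|f_n^{-1}(y)||f_n^{-1}(z)|}/|X_n|$ estimate---but it differs in two organisational choices worth noting. First, you pass to a subsequence with pairwise disjoint images so that each matrix coefficient of $P$ receives a contribution from at most one index $j$; the paper does not do this. Instead it takes $\H=\ell^2(X)$ and covers $f$ by the isometry $\delta_z\mapsto\delta_{f(z)}\otimes\delta_z$, so that the coefficient $Q_{y,z}$ is the \emph{block-diagonal} operator $\bigoplus_n \chi_{f_n^{-1}(y)}P_{X_n}\chi_{f_n^{-1}(z)}$, whose norm is the supremum (not the sum) of the block norms. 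The ``sum of contributions'' you flag as the genuinely non-formal point therefore never arises in the paper's construction---orthogonality of the blocks does the work that disjointness of images does in yours---and the finite-fibre hypothesis is used there only to produce the finite exceptional set $F=\bigcup_{n\le N_0}f_n(X_n)$ and to guarantee that $\mathrm{Ad}_V$ lands in the Roe algebra. Your subsequence extraction is a legitimate alternative (a ghost projection built from a subsequence is still a ghost projection in $C^*(Y)$), just not a necessary one. Second, you invoke Theorem~\ref{thm: existence of ghost proj} to place $P_{\widetilde X}$ in $C^*_u(\widetilde X)$ for the asymptotic expander sequence directly, whereas the paper first uses Theorem~\ref{thm:structure.of.as.exp.general.case} to replace each $X_n$ by a weakly contained expander graph and then uses the classical fact that averaging projections of expanders lie in the uniform Roe algebra; both routes are valid, and yours is marginally more self-contained given that Theorem~\ref{thm: existence of ghost proj} is already proved.
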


\begin{proof}
As in the proof of Theorem~\ref{thm: existence of ghost proj}, it follows from Theorem \ref{thm:structure.of.as.exp.general.case} that $X_n$ (weakly) contains an expander graph $\CG_n$. Since the restriction of $f_n$ to $\CG_n$ is again a weak containment, we can assume without loss of generality that $\braces{X_n}_{n\in\NN}$ is a sequence of expander graphs equipped with their edge path metrics. In this case, the existence of non\=/compact ghost projections in the Roe algebra of $Y$ is well\=/known to experts (for instance, the anonymous referee pointed out to us that analogous arguments appear \emph{e.g.} in \cite[Section 7]{HLS02}). 
 For completeness, we provide here a proof.

Let $X$ be a coarse disjoint union of $\{X_n\}_{n\in\NN}$, and $f\coloneqq \bigsqcup f_n\colon X \to Y$. Assume each $f_n$ is $L$-Lipschitz. By appropriately choosing the metric on $X$, we may assume that $f$ is $L$-Lipschitz as well. From the above assumption, we know that for any $y\in Y$, the set $f^{-1}(y)$ is finite. 

Denote $\H_X\coloneqq\ell^2(X)$ and $\H_Y\coloneqq\ell^2(Y; \ell^2(X)) \cong \ell^2(Y) \otimes \ell^2(X)$. We would like to construct an isometry $V\colon \H_X \to \H_Y$ covering $f\colon X \to Y$ in the sense that $\supp (V) \subseteq \{(f(x),x)\in Y\times X\mid x \in X\}$. For each $y \in Y$, consider the isometric embedding
\[
V_y\colon\chi_{f^{-1}(y)}\H_X \cong \ell^2(f^{-1}(y)) \longrightarrow \chi_{y}\H_Y \cong \C \delta_{y}\otimes \ell^2(X)
\]
defined by
\[
\delta_z \mapsto \delta_{f(z)} \otimes \delta_z = \delta_y \otimes \delta_z.
\]
Noting that $\H_X=\bigoplus_{y\in Y}\chi_{f^{-1}(y)}\H_X$ and $\H_Y=\bigoplus_{y\in Y}\chi_{y}\H_Y$, we can thus set
\[
V\coloneqq \bigoplus_{y\in Y} V_y\colon \H_X\longrightarrow \H_Y.
\]
It follows directly from the above construction that $\supp (V) \subseteq \{(f(x),x)\in Y\times X\mid x \in X\}$, hence $V$ induces a homomorphism
\[
\mathrm{Ad}_V\colon C^*_u(X) \cong C^*(X;\H_X) \longrightarrow C^*(Y;\H_Y) \cong C^*(Y), \quad T \mapsto VTV^*.
\]
Note that the image of $\mathrm{Ad}_V$ is indeed in the Roe algebra because $f$ is Lipschitz and finite-to-one.

Since $\{X_n\}_{n\in\NN}$ is a sequence of expanders, the associated averaging projection $P_X$ is a ghost projection and belongs to $C^*_u(X)$. We consider the operator $Q\coloneqq VP_XV^*$. Clearly, $Q$ is a non-compact projection and belongs to the Roe algebra $C^*(Y)$. Hence it suffices to show that $Q$ is ghost.

Direct calculation shows that for any $y,z \in Y$, we have
\[
Q_{y,z}=\bigoplus_{n} Q^{(n)}_{y,z} \in \prod_{n} \B(\ell^2(X_n)) \subseteq \B(\ell^2(X)),
\]
where $Q^{(n)}_{y,z} \in \B(\ell^2(X_n))$ is defined by:
\begin{equation*}
(Q^{(n)}_{y,z})_{x,x'}\coloneqq
\begin{cases}
  ~\frac{1}{|X_n|} & \text{if }x,x'\in X_n, \mbox{~and~}f_n(x)=y, f_n(x')=z; \\
  ~0 & \mbox{otherwise}.
\end{cases}
\end{equation*}
In other words, we have: 
$$Q^{(n)}_{y,z}=\chi_{f_n^{-1}(y)}P_n\chi_{f_n^{-1}(z)},$$
which implies that 
\[
\|Q^{(n)}_{y,z}\| = \frac{\sqrt{|f_n^{-1}(y)|\cdot |f_n^{-1}(z)|}}{|X_n|}.
\]

Since $\{f_n\}$ is a weak embedding, for any $\varepsilon>0$, there exists some $N_0 \in \N$ such that for every $n>N_0$ and $y \in f(X_n)$, we have
$$\frac{|f_n^{-1}(y)|}{|X_n|} <\varepsilon.$$
Consider the finite subset
\[
F\coloneqq\bigcup_{n=1}^{N_0} f_n(X_n).
\]
For every pair $y,z \in Y$ with $(y,z)\notin F \times F$, at least one of $y,z$ does not belong to $F$. This implies that $Q^{(n)}_{y,z}=0$ for every $n\leq N_0$ and hence
\[
\|Q_{y,z}\| = \sup_{n>N_0} \|Q^{(n)}_{y,z}\| = \sup_{n>N_0} \frac{\sqrt{|f_n^{-1}(y)|\cdot |f_n^{-1}(z)|}}{|X_n|} < \varepsilon.
\]
This proves that $Q$ is ghost.
\end{proof}

\begin{cor}\label{cor: weak embedding and ghost projection.box space}
Let $\{Y_n\}_{n\in \N}$ be a sequence of finite metric spaces of uniformly bounded geometry, and $Y$ be their coarse disjoint union. Assume that there exists a sequence of asymptotic expanders $\{X_n\}_{n\in \N}$ of uniformly bounded geometry which admits a weak embedding into $Y$. Then there exists a non-compact ghost projection in the Roe algebra $C^*(Y)$.
\end{cor}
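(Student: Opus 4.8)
The plan is to deduce this corollary from Proposition~\ref{prop: weak embedding and ghost projection.finite to 1} by verifying its one extra hypothesis, namely that the weak embedding is \emph{finite-to-one across the sequence}: for every $y\in Y$ the preimage $f_n^{-1}(y)$ is empty for all but finitely many $n$. When the target $Y$ is itself a coarse disjoint union this comes for free, and essentially for a single reason: an expander graph is connected, and a connected set cannot ``jump'' between the far-apart pieces $Y_m$ of a coarse disjoint union.

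First I would carry out exactly the opening reduction of the proof of Proposition~\ref{prop: weak embedding and ghost projection.finite to 1}: by Theorem~\ref{thm:structure.of.as.exp.general.case} each $X_n$ weakly contains an expander graph $\CG_n$ whose vertex set is a subset of $X_n$ of relative size at least $\frac{3}{4}$, with edges given by an $R$-bounded relation; equipping $\CG_n$ with its edge-path metric $d_n$, the restriction of $f_n$ is $L'$-Lipschitz for a uniform constant $L'$ (because $d_{X_n}\le R\cdot d_n$ on the vertex set), and it is still a weak embedding of the sequence $\{\CG_n\}$ into $Y$ (the density bound only rescales the ratios in Definition~\ref{de:weak.embedding} by a bounded factor). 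Relabelling these graphs $X_n$, we may assume the $X_n$ are connected graphs of uniformly bounded degree forming a sequence of expanders, with an $L'$-Lipschitz weak embedding $f_n\colon X_n\to Y$; in particular adjacent vertices of $X_n$ are sent to points of $Y$ at distance $\le L'$, so $f_n(X_n)$ is ``$L'$-chain-connected'', meaning any two of its points are joined by a chain inside it with consecutive jumps $\le L'$.

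Now I would use that $d(Y_m,Y_{m'})\ge m+m'$ for $m\ne m'$. Put $M:=\lceil L'\rceil+1$; then no $L'$-chain can pass between two distinct pieces once one of the indices is $\ge M$, so each $f_n(X_n)$ lies either in the fixed finite set $Y_{<M}:=\bigsqcup_{m<M}Y_m$ or in a single piece $Y_{m(n)}$ with $m(n)\ge M$. Testing the weak-embedding condition at $R=0$ excludes the first option for all large $n$: if $f_n(X_n)\subseteq Y_{<M}$ then some fibre of $f_n$ has at least $|X_n|/|Y_{<M}|$ points, so $\sup_x |f_n^{-1}(f_n(x))|/|X_n|\ge 1/|Y_{<M}|$, contradicting that this quantity tends to $0$. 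The same estimate with a single $Y_m$ in place of $Y_{<M}$ shows that for each fixed $m$ only finitely many $n$ satisfy $m(n)=m$. Hence, given $y\in Y_m$, the preimage $f_n^{-1}(y)$ can be non-empty only for the finitely many small $n$ (for which $f_n(X_n)$ is simply some finite subset of $Y$) together with the finitely many $n$ with $m(n)=m$; so it is empty for all but finitely many $n$. This is precisely the missing hypothesis, and Proposition~\ref{prop: weak embedding and ghost projection.finite to 1} then yields a non-compact ghost projection in $C^*(Y)$.

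I expect the only real work to be bookkeeping: tracking the Lipschitz constant and density factor produced by the reduction to graphs, and making the elementary lemma ``an $L'$-chain-connected subset of a coarse disjoint union sits in one piece, or in a bounded finite part'' precise, including the harmless finitely many pairs of small indices $m,m'<M$. There is no new analytic input beyond Proposition~\ref{prop: weak embedding and ghost projection.finite to 1}; the corollary is a structural reduction followed by a counting argument.
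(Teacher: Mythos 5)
Your proposal is correct and follows essentially the same route as the paper: reduce to a weakly embedded sequence of expander graphs via Theorem~\ref{thm:structure.of.as.exp.general.case}, use connectedness plus the uniform Lipschitz bound and the separation of the pieces of the coarse disjoint union to show the images cannot spread, rule out images landing in a fixed finite part by the fibre-counting consequence of the weak-embedding condition, and then invoke Proposition~\ref{prop: weak embedding and ghost projection.finite to 1}. The only difference is presentational: the paper argues by contradiction and leaves the final counting step as ``not hard to show,'' whereas you carry it out directly; your bookkeeping (the chain-connectedness lemma and the pigeonhole estimate) is exactly what that omitted step amounts to.
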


\begin{proof}
From Theorem \ref{thm:structure.of.as.exp.general.case}, without loss of generality, we can assume that $\{X_n\}_{n\in\NN}$ is a sequence of expander graphs with edge path metrics. 
From Proposition \ref{prop: weak embedding and ghost projection.finite to 1}, it suffices to show that for any $y\in Y$, $f_n^{-1}(y)$ is non-empty for only finitely many $n$'s. 

Assume the contrary: there exists $y_0 \in Y$ such that $f_n^{-1}(y) \neq \emptyset$ for infinitely many $n$'s. For any such $n$, since each $X_n$ is connected and $f_n$ is $L$-Lipschitz, there exists a uniform $K_0$ (independent of $n$) such that $f(X_n)\subseteq \bigsqcup_{i=1}^{K_0} Y_i= E$. Hence we obtain a subsequence $\{X_{n_j}\}_{j\in \N}$ such that $f_{n_j}(X_{n_j}) \subseteq E$, while $E$ is a finite subset. It is not hard to show that this is a contradiction to $\{f_n\}_{n\in\NN}$ being a sequence of weak embeddings.
\end{proof}

The proof of Proposition~\ref{prop: weak embedding and ghost projection.finite to 1} would have been simpler if the image of an asymptotic expander under a weak embedding was again an asymptotic expander (this is the case for images of asymptotic expanders under \emph{coarse} embeddings, see Remark~\ref{rmk:coarse.image.of.asymptotic.expander}). The following example shows that this is not the case in general:

\begin{example}
 Let $\braces{X'_n}_{n\in\NN}$ be a sequence of expanders with $\abs{X'_n}=n\log(n)$. Let $X_n$ be the graph obtained from $X'_n$ by adding to it a string of $n$ vertices $I_n=(v_1,\ldots,v_n)$ by connecting $v_n$ to an arbitrary vertex of $X'_n$ . Further, choose a partition $\CP_n$ of $X'_n$ into $n$ subsets of cardinality $\log(n)$. Clearly, $\{X_n\}_{n\in \NN}$ is a sequence of asymptotic expanders.
 
 Let $Y_n$ be the quotient graph of $X_n$ where each subset $R\in\CP_n$ is collapsed to a single vertex. Then $\abs{Y_n}=2n$ and $\braces{Y_n}_{n\in\NN}$ is \emph{not} a sequence of asymptotic expanders because the image of $(v_1,\ldots ,v_{\frac n2})$ under the projection has size equal to $\frac{\abs{Y_n}}{4}$ but has small boundary.
 
 On the other hand, if $Y$ is a coarse disjoint union of the $Y_n$, then quotient maps $\pi_n\colon X_n\to Y_n\subset Y$ give a weak embedding. In fact, it is easy to see that for every $R\in \NN$ and $w\in Y_n$, the pre-image of the ball $B(w,R)$ under $\pi_n$ has cardinality at most $(D\log(n))^{R+1}$ where $D$ is the degree of $X_n$.
\end{example}

Using the work of Martin Finn-Sell in \cite{FinnSell2014}, it is now easy to prove the following:

\begin{thm}\label{thm: existence of ghost proj in Roe}
Let $\{X_n\}_{n\in \N}$ be a sequence of finite metric spaces of uniformly bounded geometry and $X$ be their coarse disjoint union. If $X$ admits a fibered coarse embedding into a Hilbert space, and there exists a sequence of asymptotic expanders of uniformly bounded geometry which admits a weak embedding into $X$, then the following statements hold:
\begin{itemize}
\item[(1)] The coarse Baum--Connes assembly map $\mu$ for $X$ is injective but non-surjective.
\item[(2)] The induced map $\iota_*\colon K_*(\mathcal{K})\rightarrow K_*(I_G)$ is injective but non-surjective, where $\iota\colon\mathcal{K}\hookrightarrow I_G$ is the inclusion of the compact ideal $\mathcal{K}$ into the ghost ideal $I_G$ of the Roe algebra $C^*(X)$.
\item[(3)] The induced map $\pi_*\colon K_*(C^*_{max}(X))\rightarrow K_*(C^*(X))$ is injective but non-surjective, where $\pi\colon C^*_{max}(X)\twoheadrightarrow C^*(X)$ is the canonical surjection from the maximal Roe algebra onto the Roe algebra. In particular, $\pi$ is not injective.
\end{itemize}
\end{thm}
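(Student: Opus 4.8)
The plan is to reduce the statement to Finn-Sell's analysis in \cite{FinnSell2014} by first producing a non-compact ghost projection inside $C^*(X)$, and this projection is already available: since $X$ contains a weakly embedded sequence of asymptotic expanders of uniformly bounded geometry, Corollary~\ref{cor: weak embedding and ghost projection.box space} yields a non-compact ghost projection $Q\in C^*(X)$, that is, an element of the ghost ideal $I_G$ that does not belong to the ideal $\mathcal{K}$ of compact operators. Unwinding the construction in Proposition~\ref{prop: weak embedding and ghost projection.finite to 1}, $Q$ is, up to conjugation by an isometry, block-diagonal with $n$-th block a compression of a rank-one projection attached to the $n$-th expander block; in particular, its image in the quotient $C^*(X)/\mathcal{K}$ is a non-zero projection.

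Next I would invoke the machinery of \cite{FinnSell2014}. The hypothesis that $X$ admits a fibered coarse embedding into a Hilbert space is exactly what Finn-Sell uses to prove that the restriction of the coarse groupoid of $X$ to the boundary $\partial\beta X$ is a-T-menable; hence this boundary groupoid satisfies the Baum--Connes conjecture and its full and reduced $C^*$-algebras have the same $K$-theory. Comparing the six-term exact sequences attached to the nested ideals $\mathcal{K}\triangleleft I_G\triangleleft C^*(X)$ with those of their maximal counterparts, together with the coarse Baum--Connes assembly maps on the geometric side, one extracts precisely what is extracted in the box-space setting of \cite{FinnSell2014}: the maximal coarse assembly map $\mu_{max}$ for $X$ is an isomorphism, the comparison maps $\pi_*\colon K_*(C^*_{max}(X))\to K_*(C^*(X))$ and $\iota_*\colon K_*(\mathcal{K})\to K_*(I_G)$ are injective (so that $\mu=\pi_*\circ\mu_{max}$ is injective), and the cokernels of $\mu$, of $\pi_*$ and of $\iota_*$ are all identified with one another.

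It then remains to see that this common cokernel is non-zero, and this is where the ghost projection $Q$ enters. Following the computation in \cite{FinnSell2014} (going back to \cite{HLS02} for expanders), I would show that the $K$-theory class $[Q]\in K_0(I_G)$ has non-zero image under the evaluation at the boundary groupoid, whereas $K_0(\mathcal{K})$ maps to zero under that evaluation since the compact operators are exactly its kernel; hence $[Q]$ is not in the image of $\iota_*$. Consequently $\iota_*$ is not surjective, and by the identification of cokernels neither $\mu$ nor $\pi_*$ is surjective; combined with the injectivity statements of the previous step, this gives conclusions (1) and (2) of the theorem and the first half of (3). For the last assertion, $\pi\colon C^*_{max}(X)\twoheadrightarrow C^*(X)$ is surjective, so if it were injective it would be an isomorphism and $\pi_*$ would then be surjective; since it is not, $\pi$ cannot be injective.

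The main obstacle is the middle step: one must identify $\mathcal{K}$ and $I_G$ inside $C^*(X)$ with, respectively, the kernel and the boundary part of the coarse groupoid $C^*$-algebra, propagate the full-versus-reduced comparison through the a-T-menable boundary, and run the diagram chase identifying the three cokernels and the boundary evaluation detecting $[Q]$. All of this is the substance of \cite{FinnSell2014}; once the non-compact ghost projection of the first step is supplied by Corollary~\ref{cor: weak embedding and ghost projection.box space}, the rest of the argument is essentially a citation.
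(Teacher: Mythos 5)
Your proposal follows essentially the same route as the paper: first obtain a non-compact ghost projection in $C^*(X)$ from Corollary~\ref{cor: weak embedding and ghost projection.box space}, then deduce (1)--(3) from Finn-Sell's results under the fibered coarse embedding hypothesis (injectivity and the isomorphism of the maximal assembly map on one side, non-surjectivity detected by the ghost projection on the other, with the factorisation $\mu=\pi_*\circ\mu_{max}$ handling (3)). The only difference is that you sketch the internal groupoid/exact-sequence mechanics of \cite{FinnSell2014} where the paper simply cites the relevant numbered statements; the argument is the same.
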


\begin{proof}
It follows from Corollary~\ref{cor: weak embedding and ghost projection.box space} that the Roe algebra $C^*(X)$ contains a non-compact ghost projection.

(1): The injectivity of $\mu$ follows from \cite[Theorem~34]{FinnSell2014} and the non-surjectivity of $\mu$ follows from \cite[Proposition~35]{FinnSell2014}.

(2): The injectivity of $\iota_*$ follows from \cite[Lemma~33 (2)]{FinnSell2014} and its non-surjectivity follows from \cite[Proposition~35]{FinnSell2014} and part (1) of this Theorem.

(3): Let $\mu_{max}$ be the maximal coarse Baum--Connes assembly map for $X$. In particular, we have that $\pi_*\circ \mu_{max}=\mu$. From \cite[Theorem~1.1]{MR3116568} or \cite[Theorem~30]{FinnSell2014} we conclude that $\mu_{max}$ is an isomorphism. Since $\mu$ is injective, $\pi_*$ is injective as well. The non-surjectivity of $\pi_*$ follows easily from the non-surjectivity of $\mu$.
\end{proof}

One can show that if $\braces{X_n}_{n\in\NN}$ is a sequence of graphs with large girth, then $X$ admits a fibered coarse embedding into a Hilbert space \cite[Example~2.5]{MR3116568}.
Then the following is a consequence of Example~\ref{eg:Schreier.triv.intersection} and Theorem~\ref{thm: existence of ghost proj in Roe}.

\begin{cor}\label{cor:uncountably.many.counterexamples}
There are uncountably many non-coarsely-equivalent families of asymptotic expander graphs that are \emph{not} expanders and violate the coarse Baum--Connes conjecture.
\end{cor}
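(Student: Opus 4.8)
The plan is to combine the explicit construction of Example~\ref{eg:Schreier.triv.intersection} with Theorem~\ref{thm: existence of ghost proj in Roe}. First I would fix once and for all a large-girth sequence of expanders $\{Y_n\}_{n\in\NN}$ of uniformly bounded degree (for instance a sequence of Ramanujan graphs, which have logarithmic girth). For each index $i$ in a suitable uncountable set, I would choose a connected bounded-degree sequence $\{Z_n^{(i)}\}_{n\in\NN}$ of large girth with $|Z_n^{(i)}|\to\infty$ and $|Z_n^{(i)}|/|Y_n|\to 0$, and then perform the edge-surgery of Example~\ref{eg:Schreier.triv.intersection} (delete one edge in $Y_n$ and one in $Z_n^{(i)}$, then add two new edges between their endpoints) to produce a connected graph $X_n^{(i)}$; let $X^{(i)}$ denote the coarse disjoint union of $\{X_n^{(i)}\}_{n\in\NN}$. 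By Example~\ref{eg:Schreier.triv.intersection}, each $\{X_n^{(i)}\}_{n\in\NN}$ is a sequence of asymptotic expander graphs with large girth that is \emph{not} a sequence of expanders. (Instead of varying the small pieces $Z_n^{(i)}$ one could equally well let the expander pieces $Y_n^{(i)}$ vary, invoking the uncountable family of pairwise non-coarsely-equivalent expanders of \cite{fisher_rigidity_2019}.)

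With the construction in place, the analytic conclusion is immediate. Since each $\{X_n^{(i)}\}_{n\in\NN}$ has large girth, the coarse disjoint union $X^{(i)}$ admits a fibered coarse embedding into a Hilbert space by \cite[Example~2.5]{MR3116568}. Moreover the inclusions $X_n^{(i)}\hookrightarrow X^{(i)}$ are $1$-Lipschitz and, as the $X_n^{(i)}$ have uniformly bounded geometry and $|X_n^{(i)}|\to\infty$, the preimage of any ball $B(x,R)$ has at most $D^{R+1}$ elements for a fixed $D$; dividing by $|X_n^{(i)}|$ and letting $n\to\infty$ shows that the asymptotic expanders $\{X_n^{(i)}\}_{n\in\NN}$ admit a weak embedding into $X^{(i)}$. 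Hence Theorem~\ref{thm: existence of ghost proj in Roe} applies to $X^{(i)}$, and in particular its coarse Baum--Connes assembly map is injective but not surjective, so $X^{(i)}$ violates the coarse Baum--Connes conjecture.

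It only remains to arrange that the index set is uncountable and the $X^{(i)}$ are pairwise non-coarsely-equivalent, and this is the single step that requires real work --- it is the ``bottleneck'' rigidity argument already sketched after Example~\ref{eg:silly.example}. After passing to a subsequence, any coarse equivalence $X^{(i)}\to X^{(j)}$ is induced by uniform coarse equivalences $f_n\colon X_n^{(i)}\to X_n^{(j)}$ by \cite{khukhro_expanders_2017}. The two added edges form the \emph{unique} bounded-size cut that coarsely splits $X_n$ into a linearly-large piece and a sublinear piece: the linearly-large piece is an expander and therefore cannot itself be coarsely disconnected into two linearly-large pieces by a bounded cut. Consequently each $f_n$ must send this decomposition to the corresponding one in $X_n^{(j)}$, hence restricts to a uniform coarse equivalence between the distinguished (sublinear, respectively expander) pieces. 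Choosing those pieces, as $i$ runs over an uncountable index set, among pairwise non-coarsely-equivalent bounded-degree large-girth sequences (such families are easy to exhibit directly, e.g.\ cycles whose lengths grow at pairwise incomparable polynomial rates, or one may use \cite{fisher_rigidity_2019} for the expander pieces) yields uncountably many pairwise non-coarsely-equivalent spaces $X^{(i)}$, each an asymptotic expander that is not an expander and violates the coarse Baum--Connes conjecture. I expect the verification that a coarse equivalence must respect the bottleneck decomposition to be the only genuinely delicate point; everything else is routine bookkeeping.
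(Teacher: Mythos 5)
Your proposal is correct and follows essentially the same route as the paper: the paper's proof of this corollary is precisely ``combine Example~\ref{eg:Schreier.triv.intersection} with Theorem~\ref{thm: existence of ghost proj in Roe}, using the large-girth hypothesis for the fibered coarse embedding and the bottleneck rigidity argument (sketched after Example~\ref{eg:silly.example}, via \cite{khukhro_expanders_2017} and \cite{fisher_rigidity_2019}) to get uncountably many coarse equivalence classes.'' You have simply written out the details the paper leaves implicit, including the easy verification that $\{X_n^{(i)}\}$ weakly embeds into its own coarse disjoint union.
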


This also answers Question 7.2 in \cite{intro}.

\begin{rmk}\label{rmk:counterexamples.no.gap}
All the previously known counterexamples to the coarse Baum--Connes conjecture are based on the existence of a non\=/compact ghost projection $P$ in the Roe algebra $C^*(X)$. The usual strategy to construct such an operator is to consider the projection $P$ onto the $0$\=/eigenspace of some operator $\Delta\in C^*(X)$ that has a spectral gap at 0. For example, this is done when $X$ is a coarse disjoint union of a sequence of finite graphs of uniformly bounded degrees. In this case, the graphs are expanders if and only if the Laplacian operator $\Delta_X\in C^*(X)$ has a spectral gap at 0. When this happens, the averaging projection $P_X$ onto the $0$\=/eigenspace of the Laplacian belongs to $C^*(X)$ (see e.g. \cite[Examples~5.3]{MR2871145}). Something similar also happens when considering group actions with a spectral gap and the associated warped cones (see \cite{MR4000570, sawicki_warped_2017}). 

In contrast, if $X$ is a coarse disjoint union of a sequence of asymptotic expanders that are \emph{not} expanders then the Laplacian operator $\Delta_X$ does not have a spectral gap. In order to show that the averaging projection $P_X$ belongs to $C^*(X)$, we had to show explicitly that $P_X$ is a limit of operators in $C^*(X)$. Therefore, Corollary~\ref{cor:uncountably.many.counterexamples} provides a new class of counterexamples to the coarse Baum--Connes conjecture that are not directly derived from actions with a spectral gap.
\end{rmk}

We end this section by pointing out that we can use the works of Finn-Sell \cite{MR3500819} and Higson--Lafforgue–-Skandalis \cite{HLS02} to prove that asymptotic expanders also provide counterexamples to the group Baum--Connes conjecture with coefficients:

\begin{cor}
Let $\Gamma$ be a finitely generated discrete group, and $\braces{X_n}_{n\in\NN}$ be a sequence of asymptotic expander graphs with uniformly bounded degree.
\begin{enumerate}
  \item If $\braces{X_n}_{n\in\NN}$ weakly embeds into $\Gamma$, then there exists a separable $\Gamma$-$C^*$-algebra $A$ such that the Baum--Connes assembly map for $\Gamma$ with coefficients in $A$ fails.
  \item If $\braces{X_n}_{n\in\NN}$ has large girth and coarsely embeds into $\Gamma$, then there exists a separable $\Gamma$-$C^*$-algebra $A$ such that the Baum--Connes assembly map for $\Gamma$ with coefficients in $A$ is injective but fails to be surjective.
\end{enumerate}
\end{cor}

\begin{proof}
For the first case, it follows from Theorem~\ref{thm:structure.of.as.exp.} that $\Gamma$ weakly contains a sequence of expanders. Then the proof follows from \cite[Section 7]{HLS02}.

For the second case, we simply use \cite[Corollary 3.6]{MR3500819} and Theorem \ref{thm: existence of ghost proj in Roe} (note that we have generalised \cite[Theorem 4.4]{MR3500819}).
\end{proof}

\section{Asymptotic expanders in the case of homogeneity}\label{sec:homog}

The aim of this section is to show that if all subsets realising the Cheeger constant in a graph are ``small'', then the graph must be highly nonhomogeneous. In the case of Cayley graphs, similar observations were made by Marc Lackenby (Lemma 2.1 of \cite{Lac}).
We prove the following:

\begin{lem}\label{lem:big.Cheeger}
Let $X$ be a finite graph. Then one of the following must hold:
\begin{itemize}
\item
there exists a subset $A\subset X$ with $|X|/4 < |A| \leq |X|/2$ realising the Cheeger constant of $X$;
\item
there exists a unique maximal (with respect to inclusion) subset $A\subset X$ with $|A| \leq |X|/2$ realising the Cheeger constant of $X$.
\end{itemize}
\end{lem}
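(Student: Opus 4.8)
The plan is to set up the following notation: write $h\coloneqq h(X)$ and call a nonempty subset $A\subseteq X$ with $\abs{A}\leq\abs{X}/2$ a \emph{Cheeger set} if $\abs{\partial A}=h\abs{A}$, and let $\mathcal{M}$ be the collection of all Cheeger sets. We may assume $\abs{X}\geq 2$, the remaining case being vacuous, so that $\mathcal{M}$ is a nonempty finite poset under inclusion and in particular has at least one maximal element; the lemma then amounts to showing that if $\mathcal{M}$ has \emph{two} distinct maximal elements, one of them must have cardinality $>\abs{X}/4$. The entire argument rests on the submodularity of the outer vertex boundary: for all $A,B\subseteq X$,
\begin{equation}\label{eq:submod.boundary}
 \abs{\partial(A\cup B)}+\abs{\partial(A\cap B)}\leq\abs{\partial A}+\abs{\partial B}.
\end{equation}
I would prove \eqref{eq:submod.boundary} by a vertex-by-vertex count: for each $x\in X$ one checks that its contribution to the left-hand side does not exceed its contribution to the right-hand side, splitting into the cases $x\in A\cap B$, $x\in A\smallsetminus B$ (and the symmetric $x\in B\smallsetminus A$), and $x\notin A\cup B$; the only case requiring attention is the last, where one uses that any vertex adjacent to $A\cap B$ is adjacent to both $A$ and $B$.

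First I would establish that unions of Cheeger sets are again Cheeger sets when they are small enough: if $A,B\in\mathcal{M}$ and $\abs{A\cup B}\leq\abs{X}/2$, then $A\cup B\in\mathcal{M}$. If $A\cap B=\emptyset$, then $\partial(A\cup B)\subseteq\partial A\cup\partial B$ and $\abs{A\cup B}=\abs{A}+\abs{B}$, so $\abs{\partial(A\cup B)}\leq\abs{\partial A}+\abs{\partial B}=h\abs{A\cup B}$; since $A\cup B$ is a nonempty candidate of size $\leq\abs{X}/2$, the reverse inequality holds by the definition of $h$, forcing equality. If instead $A\cap B\neq\emptyset$, then $A\cup B$ and $A\cap B$ are both nonempty candidates of size $\leq\abs{X}/2$, so $\abs{\partial(A\cup B)}\geq h\abs{A\cup B}$ and $\abs{\partial(A\cap B)}\geq h\abs{A\cap B}$; summing these two, invoking \eqref{eq:submod.boundary}, and using $\abs{A}+\abs{B}=\abs{A\cup B}+\abs{A\cap B}$ shows that every inequality in the resulting chain is an equality, and again $A\cup B\in\mathcal{M}$.

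Then, to conclude, suppose the second alternative of the lemma fails; then $\mathcal{M}$ has two distinct maximal elements $A\neq B$, so $A\not\subseteq B$ and $B\not\subseteq A$. If $\abs{A\cup B}\leq\abs{X}/2$, the previous step would give $A\cup B\in\mathcal{M}$ with $A\subsetneq A\cup B$, contradicting the maximality of $A$; hence $\abs{A\cup B}>\abs{X}/2$. Consequently $\abs{A}+\abs{B}\geq\abs{A\cup B}>\abs{X}/2$, so we cannot have both $\abs{A}\leq\abs{X}/4$ and $\abs{B}\leq\abs{X}/4$; say $\abs{A}>\abs{X}/4$. Since $A\in\mathcal{M}$ we also have $\abs{A}\leq\abs{X}/2$ and $\abs{\partial A}=h\abs{A}$, so $A$ is a subset with $\abs{X}/4<\abs{A}\leq\abs{X}/2$ realising the Cheeger constant of $X$, which is the first alternative.

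The hard part will be nothing more than bookkeeping, and it is concentrated entirely in the proof of \eqref{eq:submod.boundary}. Since $\partial$ here is the \emph{vertex} boundary rather than the edge boundary, the naive identity $N(A\cap B)=N(A)\cap N(B)$ for closed neighbourhoods — which would make \eqref{eq:submod.boundary} immediate — fails, and one genuinely has to run the case analysis indicated above. Everything downstream of \eqref{eq:submod.boundary}, namely Steps 1 and 2, is short and standard.
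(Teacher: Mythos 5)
Your proof is correct and follows essentially the same route as the paper: the submodularity inequality $|\partial(A\cup B)|+|\partial(A\cap B)|\le |\partial A|+|\partial B|$ that you isolate (and verify vertex by vertex) is exactly the content of the two boundary-counting identities in the paper's proof, and the equality-forcing chain against the Cheeger bound, followed by maximality, is the same argument. The only difference is organisational: you package the key step as ``a union of two Cheeger sets of total size at most $|X|/2$ is again a Cheeger set'' and argue by contradiction with two maximal elements, while the paper lets a fixed maximal Cheeger set absorb any other one directly.
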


\begin{proof}

Let $A\subset X$ be a subset satisfying $|A| \leq |X|/2$ realising $h(X)$ that is maximal with respect to inclusion.
Suppose that there exists another subset $B\neq A$ with $|B| \leq |X|/2$, realising the Cheeger constant (if there is no such $B$, then we are done).

If $|A\cup B|> |X|/2$, then one of $|A|$ or $|B|$ must be $>|X|/4$, and so we are done.

Suppose that $|A\cup B|\leq |X|/2$.
Let us first remark the following:
\begin{align*}
|\partial(A\cup B)| &= |\partial A| + |\partial B| - |\partial A \cap \partial B| - |A \cap \partial B| - |\partial A \cap B|;\\
|\partial (A \cap B)|&\leq |A \cap \partial B| + |\partial A \cap B| + |\partial A \cap \partial B|.
\end{align*}

Combining these, we have
\begin{align*}
|\partial (A\cup B)| &\leq |\partial A| + |\partial B| - |\partial (A\cap B)|\\
&\leq h(X)\left( |A| + |B| - |A\cap B|\right) \\
&= h(X) |A\cup B|.
\end{align*}

We must have equality throughout, by the definition of the Cheeger constant.
By the maximality of $A$, we must have $A\cup B=A$. Thus $B\subset A$ and so $A$ is the unique maximal subset with $|A| \leq |X|/2$ realising the Cheeger constant of $X$.
\end{proof}

We can thus deduce that a very weak symmetry condition---that of having more than one maximal subset realising the Cheeger constant---is sufficient for asymptotic expanders to be equivalent to expanders:
\begin{thm}\label{thm:symmetric.as.exp.iff.exp}
Let $\braces{X_n}_{n\in\NN}$ be a sequence of finite connected graphs with uniformly bounded degree and $|X_n|\rightarrow \infty$ such that each $X_n$ admits more than one maximal (with respect to inclusion) subset realising the Cheeger constant $h(X_n)$. Let $\braces{Y_n}_{n\in\NN}$ be a sequence of finite connected graphs with uniformly bounded degree which is uniformly coarsely equivalent to $\braces{X_n}_{n\in\NN}$.
Then $\braces{Y_n}_{n\in\NN}$ is a sequence of asymptotic expanders \emph{if and only if} it is a sequence of expanders.
\end{thm}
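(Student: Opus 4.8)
The plan is as follows. The direction ``expanders $\Rightarrow$ asymptotic expanders'' is immediate from the definitions (and is recalled in the excerpt), so all the content is in the converse. So suppose $\braces{Y_n}_{n\in\NN}$ is a sequence of asymptotic expanders; I want to deduce that it is a sequence of expanders. Since $\braces{X_n}$ and $\braces{Y_n}$ are uniformly coarsely equivalent sequences of graphs of uniformly bounded degree, and since asymptotic expansion is a coarse invariant for spaces of bounded geometry (\cite[Theorem~3.11]{intro}), the sequence $\braces{X_n}$ is itself a sequence of asymptotic expanders. By Lemma~\ref{lem: equiv def for asymptotic expanders(graph)} it therefore satisfies condition (AsEx''); applying this with $\alpha = \frac{1}{4}$, there is a constant $c > 0$ such that $\abs{\partial A} > c\abs{A}$ for every $n$ and every $A \subseteq X_n$ with $\frac{1}{4}\abs{X_n} \leq \abs{A} \leq \frac{1}{2}\abs{X_n}$.

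Now I invoke the symmetry hypothesis. Each $X_n$ is assumed to have more than one maximal (with respect to inclusion) subset realising its Cheeger constant $h(X_n)$, so the second alternative in Lemma~\ref{lem:big.Cheeger} is excluded. Hence the first alternative must hold: there is a subset $A_n \subseteq X_n$ with $\frac{1}{4}\abs{X_n} < \abs{A_n} \leq \frac{1}{2}\abs{X_n}$ and $\abs{\partial A_n} = h(X_n)\abs{A_n}$. This $A_n$ falls into the range controlled by (AsEx'') from the previous paragraph, so $h(X_n) = \abs{\partial A_n}/\abs{A_n} > c$ for every $n$. Thus $\braces{X_n}$ is a sequence of $c$\=/expanders.

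Finally, being a sequence of expanders is also invariant under coarse equivalence for sequences of graphs of uniformly bounded degree (\cite{intro}; this is the fact already used in the excerpt), so transporting the uniform Cheeger bound from $\braces{X_n}$ back along the coarse equivalence shows that $\braces{Y_n}$ is a sequence of expanders, as required. (The vertex-transitive statement of Theorem~\ref{thm:intro:symmetric.as.exp} then follows because a vertex-transitive graph with a maximal Cheeger set $A$ has $gA$ a maximal Cheeger set for every automorphism $g$, so it always has more than one unless $A$ is fixed by the whole automorphism group, which forces $A = \emptyset$ or $A = X_n$.)

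I do not expect a serious obstacle: the crux is the simple observation that the symmetry hypothesis kills the ``unique maximal Cheeger set'' alternative in Lemma~\ref{lem:big.Cheeger}, and that a Cheeger-optimal set of linear size $\geq \abs{X_n}/4$ is precisely the kind of set that (AsEx'') bounds from below — so a uniform expansion constant drops out immediately. The only points requiring mild care are the two passages through coarse equivalence (first to push the conclusion ``$\braces{Y_n}$ is asymptotically expanding'' onto $\braces{X_n}$, then to pull the conclusion ``expanding'' back onto $\braces{Y_n}$) and making sure one uses the quantitative graph form (AsEx'') from Lemma~\ref{lem: equiv def for asymptotic expanders(graph)} rather than the metric definition (AsEx') directly.
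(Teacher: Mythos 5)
Your proposal is correct and follows essentially the same route as the paper: transfer the problem between $\braces{X_n}$ and $\braces{Y_n}$ via the coarse invariance of (asymptotic) expansion, use the multiplicity hypothesis to rule out the second alternative of Lemma~\ref{lem:big.Cheeger} and obtain a Cheeger-realising set $A_n$ with $\abs{A_n}>\frac14\abs{X_n}$, then apply the graph form (AsEx'') with $\alpha=\frac14$ to get a uniform lower bound $h(X_n)>c$. The paper merely packages this slightly differently by assuming $X_n=Y_n$ from the outset (invoking both invariances at once) and phrasing the $\alpha=\frac14$ step in terms of the absence of large $c_{1/4}$\=/\fol sets, which is the same argument.
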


\begin{proof}
Since both the properties of ``being a sequence of expanders'' and ``being a sequence of asymptotic expanders'' are invariant under uniform coarse equivalence (\cite[Theorem 3.11 and Proposition A.2]{intro}), we may in the sequel assume that $X_n=Y_n$ is a finite graph for every $n\in \N$.

The ``if'' implication is obvious. For the other implication, by Lemma \ref{lem:big.Cheeger}, for every $n\in \NN$, we can choose a maximal subset $A_n\subset X_n$ realising $h(X_n)$ such that $|X_n|/4 < |A_n| \leq |X_n|/2$. If $X_n$ is a sequence of asymptotic expander graphs, then there is a constant $c_{1/4}>0$ such that $X_n$ has no $c_{1/4}$\=/\fol sets of cardinality greater than $|X_n|/4$. It follows that $A_n$ cannot be a $c_{1/4}$\=/\fol set, and hence $h(X_n)>c_{1/4}$ for every $n$.
\end{proof}

\begin{rmk}\label{vertex-tran}
We can apply Theorem~\ref{thm:symmetric.as.exp.iff.exp} when $\braces{X_n}_{n\in\NN}$ is a sequence of vertex-transitive graphs\footnote{Recall that a graph is vertex-transitive if for every pair of vertices $x,y\in X$ there is a graph automorphism $\phi\in\aut(X)$ such that $\phi(x)=y$.}, such as Cayley graphs. Indeed, it is clear that such graphs always admit more than one maximal subset realising the Cheeger constant (by simply taking images of such a subset under graph automorphisms).

In particular, this immediately applies in the case of a box space with respect to a nested sequence of \emph{normal} finite-index subgroups. That is, if $\Gamma=\angles{S}$ is a finitely generated infinite group and $N_1, N_2, \dots$ is a sequence of finite index normal subgroups of $\Gamma$ such that $\bigcap_{n\in \N} N_n=\{e\}$, then the \emph{box space} of $\Gamma$ with respect to $(N_n)$ is the sequence of Cayley graphs $X_n\coloneqq\cay(\Gamma/N_n,\bar{S})$ (where $\bar S$ is the image of $S$ under the quotient map). It is worth to noticing that the normality of the subgroups is essential for this conclusion to hold, as evidenced by Example \ref{eg:Schreier.triv.intersection}.
\end{rmk}

It may be interesting to note that, under some assumption of homogeneity, we can now produce a $C^*$\=/algebraic characterisation of expanders:

\begin{cor}\label{cor:symmetric.Roe}
Let $\braces{X_n}_{n\in\NN}$ be a sequence of finite connected graphs with uniformly bounded degree and $|X_n|\rightarrow \infty$ such that each graph $X_n$ admits more than one maximal (with respect to inclusion) subset realising the Cheeger constant $h(X_n)$. If $X$ is their coarse disjoint union, then the following are equivalent:
\begin{enumerate}
 \item[(1)] $\{X_n\}_{n\in\NN}$ is a sequence of expanders;
 \item[(2)] the averaging projection $P_X$ belongs to the uniform Roe algebra $C^*_u(X)$.
\end{enumerate}
\end{cor}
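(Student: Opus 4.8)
The plan is to obtain this corollary by simply chaining together the two main results already proved, so the argument is short. First I would apply Theorem~\ref{thm: existence of ghost proj}. Its hypotheses are met: the $X_n$ are finite connected graphs with uniformly bounded degree, hence of uniformly bounded geometry, and $|X_n|\to\infty$; $X$ is their coarse disjoint union. That theorem therefore yields the equivalence
\[
P_X\in C^*_u(X)\quad\Longleftrightarrow\quad \{X_n\}_{n\in\NN}\text{ is a sequence of asymptotic expanders},
\]
i.e.\ it identifies condition (2) with ``$\{X_n\}_{n\in\NN}$ is a sequence of asymptotic expanders''.

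Next I would apply Theorem~\ref{thm:symmetric.as.exp.iff.exp} with $Y_n\coloneqq X_n$ (so that the coarse equivalence hypothesis is satisfied trivially). The extra hypothesis there---that each $X_n$ admits more than one maximal subset realising the Cheeger constant $h(X_n)$---is exactly the homogeneity assumption in the statement of the corollary. Hence that theorem gives
\[
\{X_n\}_{n\in\NN}\text{ is a sequence of asymptotic expanders}\quad\Longleftrightarrow\quad\{X_n\}_{n\in\NN}\text{ is a sequence of expanders},
\]
which identifies condition (1) with ``$\{X_n\}_{n\in\NN}$ is a sequence of asymptotic expanders'' as well.

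Combining the two displayed equivalences gives (1)$\Longleftrightarrow$(2), which is the claim. There is essentially no obstacle to this argument; the only thing one needs to check is that the standing hypotheses of both invoked theorems are genuinely in force, and this is immediate once one notes that uniformly bounded degree implies uniformly bounded geometry, while connectedness and $|X_n|\to\infty$ are assumed outright. (As noted in Remark~\ref{vertex-tran}, the vertex-transitive case---in particular box spaces with respect to nested normal finite-index subgroups---is a special instance, since such graphs automatically have many maximal Cheeger subsets, obtained from one another by automorphisms.)
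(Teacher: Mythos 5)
Your proposal is correct and is exactly the argument the paper intends (the corollary is stated without a separate proof precisely because it follows by combining Theorem~\ref{thm: existence of ghost proj} with Theorem~\ref{thm:symmetric.as.exp.iff.exp} applied with $Y_n=X_n$, as announced in the introduction). The hypothesis checks you make---uniformly bounded degree gives uniformly bounded geometry, and the identity serves as the uniform coarse equivalence---are the only points that need verification, and you handle them correctly.
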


\section{Questions and comments}\label{sec:questions}
As already mentioned, we feel that the notion of asymptotic expanders will end up playing some role in a rather diverse array of topics. This is mostly due to the fact that this notion seems to encapsulate many good properties of genuine expanders into a robust framework.
One advantage that expanders have over asymptotic expanders is that they already have many different characterisations. One characterisation that has proved to be particularly useful is given in terms of spectral gap of the averaging operator.
This raises the following:

\begin{qu}\label{qu:spectral.char.as.exp}
In analogy to the characterisation of expanders in terms of the spectral gap of the Laplacian, is there a characterisation of asymptotic expanders in terms of spectra?
\end{qu}

A positive answer to the above question could also be useful in the setting of dynamical systems, as asymptotic expansion is equivalent to strong ergodicity \cite{li2021asymptotic}.

Going to the opposite direction of Question~\ref{qu:spectral.char.as.exp} we have:

\begin{qu}
 Is there a $C^*$\=/algebraic characterisation of expanders?
\end{qu}

One partial result in this direction follows from Corollary~\ref{cor:symmetric.Roe}, and shows that under the additional homogeneity assumption expanders can be characterised by the property that $P_X$ belongs to the uniform Roe algebra.
In general, if $\braces{X_n}_{n\in\NN}$ is a sequence of uniformly bounded degree graphs and $P_X$ is in the uniform Roe algebra then we know from Theorem~\ref{thm: existence of ghost proj} that  all the ``linearly large'' subsets of $X_n$ must have large boundary. It would then remain to find a $C^*$\=/algebraic condition that imply that ``linearly small'' subsets have large boundary.

In view of Theorem~\ref{thm:symmetric.as.exp.iff.exp}, it would be interesting to know for which classes of graphs the notions of asymptotic expanders and usual expanders are equivalent.

\begin{qu}
Which properties would guarantee that for a sequence of graphs, being asymptotic expanders is equivalent to being expanders? Is it possible to give a complete characterisation of such graphs?
\end{qu}

 To our knowledge, all the proofs that expanders do not coarsely embed into a Banach space $E$ make use of Poincaré inequalities. It follows from the discussion in Section~\ref{sec:non.coarse.embeddability} that these proofs also imply that bounded geometry asymptotic expanders cannot coarsely embed into such $E$.
 On the other hand, it is conceivable that there exists a Banach space $E$ where expanders cannot coarsely embed even if they do not satisfy an $E$\=/valued Poincaré inequality. In this case, the arguments in Section~\ref{sec:non.coarse.embeddability} would not prove that asymptotic expanders cannot coarsely embed in $E$.

 It may thus be of interest to investigate the following.
\begin{qu}
Do the following two classes of Banach spaces coincide?
 \begin{align*}
  \CC_1&\coloneqq\braces{E\mid\text{no sequence of expanders can coarsely embed into E}},\\
  \CC_2&\coloneqq\braces{E\mid\text{no bound. geom. sequence of asymptotic expanders can coarsely embed into }E}.
 \end{align*} 
(Note that $\CC_2\subseteq \CC_1$ because expanders are asymptotic expanders of bounded geometry). 
\end{qu}

\bibliographystyle{plain}
\bibliography{ExpanderishVig,bibfile}

\end{document}